\titleformat{\section}{\Large\bfseries}{\thesection.}{4pt}{}
\titleformat{\subsection}{\large\bfseries}{\thesection.\arabic{subsection}.}{4pt}{}
\titleformat{\subsubsection}{\bfseries}{\thesection.\arabic{subsection}.\arabic{subsubsection}.}{4pt}{}
\titleformat*{\paragraph}{\bfseries}
\titleformat*{\subparagraph}{\bfseries}
\newtheorem{theorem}{Theorem}[section]
\newtheorem{corollary}[theorem]{Corollary}
\newtheorem{lemma}[theorem]{Lemma}
\newtheorem{proposition}[theorem]{Proposition}
\theoremstyle{definition}
\newtheorem{remark}[theorem]{Remark}
\newcommand{\Rb}{\mathbb{R}}
\newcommand{\Cc}{\mathcal{C}}
\newcommand{\Ec}{\mathcal{E}}
\newcommand{\Gc}{\mathcal{G}}
\newcommand{\Ic}{\mathcal{I}}
\newcommand{\Oc}{\mathcal{O}}
\newcommand{\Rc}{\mathcal{R}}
\newcommand{\Kc}{\mathcal{K}}
\newcommand{\Zc}{\mathcal{Z}}
\newcommand{\bee}{\begin{eqnarray*}}
\newcommand{\eee}{\end{eqnarray*}}
\newcommand{\bea}{\begin{eqnarray*}}
\newcommand{\eea}{\end{eqnarray*}}
\newcommand{\inn}{\textup{in}}
\newcommand{\out}{\textup{ex}}
\newcommand{\Ls}{\mathscr{L}}
\newcommand{\Ms}{\mathscr{M}}
\newcommand{\Hs}{\mathscr{H}}
\newcommand{\As}{\mathscr{A}}
\def\fref#1{{\rm (\ref{#1})}}
\newcommand{\rj}{\left< r \right>}
\newcommand{\zj}{\left< z \right>}
\newcommand{\pa}{\partial}
\newcommand{\be}{\begin{equation}}
\newcommand{\ee}{\end{equation}}
\newcommand{\ba}{\begin{array}}
\newcommand{\ea}{\end{array}}
\newcommand{\la}{\langle}
\newcommand{\ra}{\rangle}
\newcommand{\e}{\varepsilon}
\newcommand{\tnu}{\tilde \nu}
\numberwithin{equation}{section}
\title[Spectral analysis of a linearized operator for the 2D Keller-Segel system] 
      {Spectral analysis for singularity formation of the two dimensional Keller-Segel system}
 \keywords{Keller-Segel system, Blowup solution, Blowup profile, Stability, Construction, spectral analysis}
\author[C. Collot]{Charles Collot}
\address{Courant Institute of Mathematical Sciences, New York University, 251 Mercer Street, New York, NY 10003, United States of America.}
\email{cc5786@nyu.edu}
\author[T. Ghoul]{Tej-Eddine Ghoul}
\address{Department of Mathematics, New York University in Abu Dhabi, Saadiyat Island, P.O. Box 129188, Abu Dhabi, United Arab Emirates.}
\email{teg6@nyu.edu}
\author[N. Masmoudi]{Nader Masmoudi}
\address{Department of Mathematics, New York University in Abu Dhabi, Saadiyat Island, P.O. Box 129188, Abu Dhabi, United Arab Emirates.}
\email{masmoudi@cims.nyu.edu}
\author[V. T. Nguyen]{Van Tien Nguyen}
\address{Department of Mathematics, New York University in Abu Dhabi, Saadiyat Island, P.O. Box 129188, Abu Dhabi, United Arab Emirates.}
\email{Tien.Nguyen@nyu.edu}
\thanks{\today}
\begin{document}
\maketitle


\medskip

\begin{abstract} 

We analyse an operator arising in the description of singular solutions to the two-dimensional Keller-Segel problem. It corresponds to the linearised operator in parabolic self-similar variables, close to a concentrated stationary state. This is a two-scale problem, with a vanishing thin transition zone near the origin. Via rigorous matched asymptotic expansions, we describe the eigenvalues and eigenfunctions precisely. We also show a stability result with respect to suitable perturbations, as well as a coercivity estimate for the non-radial part. These results are used as key arguments in a new rigorous proof of the existence and refined description of singular solutions for the Keller-Segel problem by the authors \cite{CGNNarx19b}. The present paper extends the result by Dejak, Lushnikov, Yu, Ovchinnikov and Sigal \cite{DphysD12}. Two major difficulties arise in the analysis: this is a singular limit problem, and a degeneracy causes corrections not being polynomial but logarithmic with respect to the main parameter.

\end{abstract}

\section{Introduction} 
We describe in this paper a detailed spectral analysis for the linear operator
\begin{equation} \label{def:Lz}
\Ls^z f = \Delta f - \nabla \cdot(f\nabla \Phi_{U_\nu} + U_\nu\nabla \Phi_f) - \beta \nabla \cdot(z f)
\end{equation}
in the radial and non-radial settings, where
$$\Phi_f = -\frac{1}{2\pi} \log|z|\ast f, \quad  U_\nu(z) = \frac{8\nu^2}{(\nu^2 + |z|^2)^2}, \quad \nabla \Phi_{U_\nu}(z) = -\frac{4 z}{\nu^2 + |z|^2},$$
$\beta > 0$ is a fixed constant and $0 < \nu \ll 1$ is the main parameter of the problem.   
\subsection{Origin of the spectral problem}
The linear operator $\Ls^z$ appears in the study of singularities of the following two dimensional parabolic-elliptic Keller-Segel system:
\begin{equation}\label{eq:KS}
\arraycolsep=1.4pt\def\arraystretch{1.6}
\left\{ \begin{array}{ll}
\partial_t u = \Delta u -\nabla \cdot \big( u \nabla \Phi_u\big),\\
\Phi_u  = -\frac{1}{2\pi} \log |x| \ast u,
\end{array}
\right. \quad (x,t)\in \Rb^2\times [0,T),
\end{equation}
see \cite{KSjtb70},  \cite{KSjtb71a}, \cite{KSjtb71b}, \cite{PATbmb53}, and \cite{Hjdmv03} for a survey of the problem. It is well known (see for example, \cite{JLtams92}, \cite{CPmb81}, \cite{Clec84}, \cite{DNRjde98}, \cite{BDPjde06}, \cite{BCCjfa12} and references therein) that the problem \eqref{eq:KS} exhibits finite time blowup solutions if the initial datum satisfies $u_0\geq 0$, some localisation assumptions and
$$M = \int_{\Rb^2} u_0 (x) dx > 8\pi. $$
The threshold $8\pi$ is related to the family of stationary solutions $(U_\eta)_{\eta>0}$ of \eqref{eq:KS}, where
\begin{equation} \label{def:U}
U_\eta(x) = \frac{1}{\eta^2}U\big(\frac{x}{\eta}\big) \quad \textup{with} \quad  U(x) = \frac{8}{(1 + |x|^2)^2} \quad \textup{and} \quad \int_{\Rb^2}U_\eta(x)dx = 8\pi.
\end{equation}
The parameter $\eta$ is linked to the scaling symmetry of the problem: if $u$ is a solution to \fref{eq:KS}, then for any $\eta>0$, $u_\eta$ defined by
\be \label{id:scaling}
u_\eta(x,t) = \frac{1}{\eta^2}u\left(\frac{x}{\eta}, \frac{t}{\eta^2}\right)
\ee
is a solution as well. As the mass $M$ which is a conserved quantity for \fref{eq:KS} is invariant under the above transformation, the problem is called critical. A key problem in understanding singular solutions is to analyse their asymptotic self-similarity. If a solution to \fref{eq:KS} is both singular at $t=0$ and invariant under the transformation \eqref{id:scaling}, it would be of the form $(-t)^{-1}w(x/\sqrt{-t})$; non-degenerate self-similarity would then refer to blowup solutions satisfying $u\sim (T-t)^{-1}w(x/\sqrt{T-t})$. However, one of the remarkable facts about finite time blowup solutions of \eqref{eq:KS} is that they present a degenerate self-similarity. Precisely, they are of type II blowup (see Theorem 8.19 in \cite{SSbook11} and Theorem 10 in \cite{NScm08} for such a statement) in the following sense. A solution $u(t)$ of \eqref{eq:KS} exhibits type I blowup at $t = T$ if there exists a constant $C > 0$ such that
\begin{equation}\label{def:TypeI}
\limsup_{t\to T}(T-t)\|u(t)\|_{L^\infty(\Rb^2)} \leq C,
\end{equation}
otherwise, the blowup is of type II. Equivalently, in the parabolic self-similar variables
\begin{equation} \label{eq:parabolicrenorm}
u(x,t) = \frac{1}{\mu^2}w(z,\tau), \quad \Phi_u(x,t) = \Phi_w(y,s), \quad z = \frac{x}{\mu}, \quad \frac{d\tau}{dt} = \frac{1}{\mu^2}, \quad \mu(t) = \sqrt{T-t},
\end{equation}
where $w(z,\tau)$ solves the equation 
\begin{equation}\label{eq:wztau}
\pa_\tau w = \nabla \cdot\big( \nabla w - w \nabla \Phi_w \big) - \beta \nabla\cdot(z w) \quad \textup{with} \quad \beta = -\frac{\mu_\tau}{\mu} = \frac{1}{2},
\end{equation}
$u$ is a type II finite time blowup solution of \eqref{eq:KS} if and only if $w$ is a global but unbounded solution of \eqref{eq:wztau}. The mechanism of singularity formation then involves crucially the above family of solutions $U_\eta$, see for example, \cite{HV1,Vsiam02, Vsiam04b, Vsiam04a,dejak2014blowup,dyachenko2013logarithmic,RSma14} and references therein. The key idea is that in equation \fref{eq:KS} the time variation $\partial_t u$ is asymptotically of lower order, the solution approaches the family of stationary states $u\sim U_{\eta}$ and a scaling instability drives the parameter $\eta$ to $0$  as $t\rightarrow T$. This motivates the study of a solution in the variables \fref{eq:parabolicrenorm} having the form 
$$w(z,\tau) =  U_\nu(z) + \e(z,\tau),$$
where $\nu=\eta/\sqrt{T-t}$ is time dependent with $\nu(\tau)\rightarrow 0$ as $\tau \rightarrow \infty$, and $\e$ is a lower order perturbation solving:
\begin{equation}\label{eq:eps}
\pa_\tau \e = \Ls^z \e - \nabla \cdot(\e \nabla \Phi_\e) +\left( \frac{\nu_\tau}{\nu}  - \beta\right) \nabla \cdot(zU_\nu).
\end{equation} 
Above, $\Ls^z$ is precisely the operator introduced in \fref{def:Lz} that we aim at studying in the present paper. The importance of its study is motivated by the following.The first rigorous construction of a blow-up solution by Herrero and Vel\'azquez for \fref{eq:KS}, \cite{HV1}, does not provide its stability, which is formally obtained in \cite{Vsiam02}. The work \cite{Vsiam02} shows linear stability in the inner zone $|z|\sim \nu$ if the scaling term is neglected, and gives an expansion for $\pa_\tau \e = \Ls^z \e $ in the parabolic zone $|z|\sim 1$ via formal series and matched asymptotics. A rigorous radial stability result is given by Rapha\"el and Schweyer in \cite{RSma14} in which the solution is studied in blow up variables $y = \frac{z}{\nu} \sim 1$ where a refined description is obtained. The description involves parameters, and their evolution (the modulation laws) is computed based on so called tail-dynamics, relying on suitable cancellations in the parabolic zone $|z|\sim 1$. The analysis of the tail-dynamics is however heavy, as it does not involve a refined understanding of the solution in self-similar variables. Our precise spectral study for the operator \fref{eq:eps}, however, gives a framework to control the solution accurately, \emph{on both scales simultaneously}, and the temporal evolution of the parameters is easily related to the projection of the dynamics on its eigenmodes. The present paper is a key result in this new approach to the construction of singular solutions to \fref{eq:KS} that is implemented in \cite{CGNNarx19b}, and allows to obtain a refined description (see Remark \ref{rk:blowup}).\\

\noindent It is remarkable that in the radial setting, the nonlocal operator $\Ls^z$ reduces to a local one in terms of the partial mass  
\begin{equation}
m_f(\zeta) = \frac{1}{2\pi} \int_{B(0,\zeta)} f(z) z dz, \quad \zeta = |z|,
\end{equation}
where $B(0,\zeta)$ the ball centered at $0$ of radius $\zeta$. Indeed, if $f$ is spherically symmetric, then we have the relation
$$
\Ls^z f(\zeta) = \frac{1}{\zeta}\pa_\zeta \Big( \As^\zeta m_f(\zeta) \Big),
$$
where $\As^\zeta$ is the linear operator defined by
\begin{equation}\label{def:AszetaAs0}
\As^\zeta = \As^\zeta_0 - \beta \zeta\pa_\zeta \quad \textup{with} \quad \As_0^\zeta = \pa_\zeta^2 - \frac{1}{\zeta} \pa_\zeta +\frac{Q_\nu }{\zeta}\pa_\zeta  + \frac{ \pa_\zeta(Q_\nu )}{\zeta} \quad \textup{and} \quad Q_\nu(\zeta) = \frac{4 \zeta^2}{\zeta^2 + \nu^2}.
\end{equation}
Hence, in the radial setting $\Ls^z$ and $\As^\zeta$ share the same spectrum and if $\varphi$ and $\phi$ are the radial eigenfunctions of $\Ls^z$ and $\As^\zeta$ respectively, we have the relation 
$$\varphi(\zeta) = \frac{\pa_\zeta \phi(\zeta)}{\zeta}, \quad \Phi_\varphi(\zeta) = -\frac{\phi(\zeta)}{\zeta}.$$ 
Therefore, we are interested in the eigenproblem
\begin{equation}\label{eq:eiPb}
\As^\zeta\phi(\zeta) = \lambda \phi(\zeta), \quad \zeta \in [0, \infty),
\end{equation}
in the regime
\begin{equation}\label{rangeOfnu}
\beta\sim 1, \quad  0 < \nu \ll 1.
\end{equation}
Note that the constant $\beta$ is not necessarily close to 1, it can be any fixed positive constant. 

\subsection{Main results}

Our first result concerns the spectrum of $\Ls^z$ in the radial setting. Its analysis has been done by Dejak, Lushnikov, Yu, Ovchinnikov and Sigal \cite{DphysD12} via matched asymptotic expansions. Our approach, similar in spirit to \cite{DphysD12}, is inspired by the work of Collot, Merle, and Rapha\"el \cite{CMRarx17}  for the study of type II supercritical singularities of the semilinear heat equation $u_t=\Delta u +|u|^{p-1}u$ (see also \cite{CRSmams19,HRjems19,meraszaniso} for related problems). The strategy is to construct suitable eigenfunctions near the origin and away from the origin, and to match them rigorously to produce a full eigenfunction. Differentiating the matching condition then provides information on the dependence of the eigenfunctions on the parameters. The current work extends this approach to a critical problem, showing its robustness. Solving \eqref{eq:eiPb}, though, is not just a mere adaptation the techniques of \cite{CMRarx17} because of the following points.\\

\noindent This critical case displays two new degeneracies. First, this is a singular limit problem. Indeed, from the explicit formula \fref{def:AszetaAs0} for $Q_\nu$, we note that the operator $\As^\zeta$ converges to a limit operator pointwise outside the origin, namely that for any smooth function $f$ and at any fixed $\zeta>0$, we have
$$
\As^\zeta f(\zeta)\rightarrow \pa_\zeta^2 f(\zeta) + \frac{3}{\zeta} \pa_\zeta f(\zeta) -\beta \zeta \partial_\zeta f(\zeta) \quad \mbox{ as } \nu \rightarrow 0.
$$
The limit operator $ \pa_\zeta^2 + 3/\zeta \pa_\zeta  -\beta \zeta \partial_\zeta$ is well understood, its spectrum is $\{ 0,-2\beta,-4\beta,-6\beta,...\}$ and its eigenfunctions are Hermite polynomials. However, the limit $\nu \rightarrow 0$ for the problem \fref{eq:eiPb} is a singular one. The problem involves two scales: one is $\zeta \sim 1$ and the other is $\zeta \sim \nu$. What happens at the latter actually prevents the convergence to the aforementioned limit operator: the spectrum is shifted by the constant $2\beta$ at the leading order as is shown in Proposition \ref{prop:SpecRad} below. This in particular prevents the use of a bifurcation argument. Second, this problem also presents another degeneracy from which most of the technical difficulty stems from, since next order corrections, instead of being polynomial in the parameter $\nu$, are actually polynomial in $1/|\log \nu|$. We then need to refine to higher order the description of both the inner solution at $\zeta \sim \nu$ and the outer solution at $\zeta \sim 1$.\\

\noindent We provide a precise description of the eigenfunctions, relating them to the iterated kernel $(T_i)_{i\in \mathbb N}$ of $\As_0$, the linearised operator near the stationary state, a rescaled version of $\As_0^\zeta$ via the change $\zeta = \nu r$, i.e.
\begin{equation}\label{def:As}
\As_0 = \pa_r^2 - \frac{1}{r}\pa_r + \frac{\pa_r (Q \cdot)}{r} \quad \textup{with} \quad Q(r) = \frac{4r^2}{1 + r^2},
\end{equation}
defined by:
$$T_{j+1}(r) = - \As_0^{-1} T_j(r) \quad \textup{with} \quad T_0(r) = \frac{r^2}{(1 + r^2)^2}=\frac 18 r\pa_r Q, \;\; \As_0 T_0 = 0.$$
As $r\pa_r Q$ is the direction of scaling instability for the stationary state, this description allows to understand the rescaled blow-up dynamics \fref{eq:eps}. In addition, the properties of $T_j$ can be explicitly computed, such as its asymptotic behavior (see Lemma \ref{lemm:proAs})
$$T_j(r) \sim \hat d_j r^{2j - 2} \ln r, \quad \mbox{as } r\rightarrow \infty, \quad \hat d_j\neq 0 \mbox{ a constant.} $$

\noindent To state our results, we use the notation $A \lesssim B$ to say that there exists a constant $C > 0$ such that $ 0 \leq A \leq CB$. Similarly, $A \sim  B$ means that there exist constants $0 < c < C$ such that $cA \leq B \leq CA$. We write $\rj = \sqrt{1 + r^2}$, and use the notation $D_\zeta^k$ for $k\in \mathbb N$ for $k$-th adapted derivative with respect to $\zeta$ defined by
$$
D_\zeta^{2k}=\left(\zeta \pa_\zeta (\frac{\pa_\zeta}{\zeta}) \right)^{2k}, \ \ D_\zeta^{2k+1}=\pa_\zeta D^{2k},
$$
and the weight functions
\be \label{omega}
\omega_\nu(\zeta) = \frac{\nu^2 }{U_\nu(\zeta)} e^{-\frac{\beta \zeta^2}{2}}=\frac{\nu^2}{U_\nu(\zeta)}\rho_0(\zeta), \quad \rho_0 (\zeta)=e^{-\frac{\beta \zeta^2}{2}},
\ee
with corresponding weighted Lebesgue space $L^2_{\frac{\omega_\nu}{\zeta}}$ and Sobolev spaces $H^k_{\frac{\omega_\nu}{\zeta}}=\{ f: \; \sum \limits_{0}^k \| D^k f\|_{L^2_{\frac{\omega_\nu}{\zeta}}}<\infty\}$. Our first main result is to describe in details spectral properties of $\As^\zeta$ in the regime \eqref{rangeOfnu}. 

\begin{proposition}[Spectral properties of $\As^\zeta$] \label{prop:SpecRad} The linear operator $\As^\zeta: H^2_{\frac{\omega_\nu}{\zeta}} \rightarrow L^2_{\frac{\omega_\nu}{\zeta}}$ is essentially self-adjoint with compact resolvant. Moreover, given any $N\in \mathbb N$, $0<\beta_*< \beta^*$ and $0<\delta \ll 1$, there exists a $\nu^*>0$ such that the following holds for all $0<\nu \leq \nu^*$ and $\beta_*\leq \beta \leq \beta^*$.\\
\noindent $(i)$ \textup{(Eigenvalues)} The first $N + 1$ eigenvalues are given by
\begin{equation}\label{def:specAsbmain}
\lambda_{n, \nu} = 2\beta \Big(1 - n   + \tilde{\alpha}_{n, \nu}\Big), \quad \mbox{for }n = 0, 1, \cdots, N,
\end{equation}
where
\begin{equation}\label{est:nu0ntilmain}
\tilde{\alpha}_{n, \nu}=\frac{1}{2\ln \nu}+\bar \alpha_{n, \nu} \quad  \mbox{with } \quad  | \bar{\alpha}_{n, \nu}| + \big|\nu\pa_\nu \tilde \alpha_{n, \nu}\big| \lesssim \frac{1}{|\ln \nu|^{2}}.
\end{equation}
In particular, we have the refinement of the first two eigenvalues with $\gamma$ the Euler's constant:
$$\left| \tilde \alpha_{n,\nu}- \frac{1}{2\ln \nu} -  \frac{\ln 2 - \gamma - n - \ln \beta}{4|\ln \nu|^2} \right| \lesssim \frac{1}{|\ln \nu|^3}, \quad \mbox{for }n=0,1.$$

\noindent $(ii)$ \textup{(Eigenfunctions)} There exist eigenfunctions $\phi_{n, \nu}$ satisfying the following. There holds the pointwise estimates for $ k = 0,1,2$:
\begin{align}
\left|D_\zeta^k \phi_{n, \nu}(\zeta)\right|+\left|D_\zeta^k \beta\partial_\beta \phi_{n, \nu}(\zeta) \right|& +\left|D_\zeta^k \nu\partial_\nu \phi_{n, \nu}(\zeta) \right| \nonumber \\
  & \qquad \lesssim \left(\frac{\zeta}{\nu+\zeta}\right)^{2-(k\; \textup{mod}\; 2)}\frac{ \langle\zeta\rangle^{2n +\delta} \big(1 + \ln \big\la\frac{\zeta}{\nu} \big\ra\, \mathbf{1}_{\{n \geq 1\}} \big)}{(\zeta + \nu)^{2 + k}}. \label{est:PhinPointEstmain}
\end{align}
There holds in addition the refined identity:
$$
 \phi_{n,\nu}(\zeta) = \sum_{j = 0}^n c_{n,j}\beta^{j}\nu^{2j-2} T_j\big(\frac{\zeta}{\nu}\big) + \tilde{\phi}_{n,\nu}(\zeta),
$$
where the profiles $T_j$ and the constants $c_{n,j}$ are defined in Lemma \ref{lemm:proAs}, with for $k=0,1,2$:
$$
\left|D_\zeta^k \tilde \phi_{n, \nu}(\zeta)\right| +\left|D_\zeta^k \nu\partial_\nu \tilde \phi_{n, \nu}(\zeta) \right| +\left|D_\zeta^k \beta\partial_\beta \tilde \phi_{n, \nu}(\zeta) \right| \lesssim  \min \left(\nu^2\langle \frac{\zeta}{\nu}\rangle ^2, \frac{1}{|\ln \nu|}  \right) \left(\frac{\zeta}{\nu+\zeta}\right)^{2-(k\; \textup{mod}\; 2)}\frac{ \langle\zeta\rangle^{2n +\delta}}{(\zeta + \nu)^{2 + k}}.
$$

There holds the $L^2_{\frac{\omega_\nu}{\zeta}}$ estimates for all $0\leq m,n\leq N$:
\begin{equation}\label{est:PhinL2norm1main}
 \langle \phi_{n, \nu}, \phi_{m, \nu} \rangle^2_{L^2_{\frac{\omega_\nu}{\zeta}}} = c_{n}\delta_{m,n},  \quad c_0\sim \frac{|\ln \nu|}{8}, \quad c_1\sim \frac{|\ln \nu|^2}{4}, \quad c|\ln \nu|^2\leq c_n \leq  \frac 1c |\ln \nu|^2 \mbox{ for }n\geq 2,
\end{equation}
where $c$ is some positive constant.

\noindent $(iii)$ \textup{(Spectral gap estimate)} For any $g \in L^2_{\frac{\omega_\nu}{\zeta}}$ with $\langle g, \phi_{j, \nu}\rangle_{L^2_{\frac{\omega_\nu}{\zeta}}} = 0$ for $0 \leq j \leq N$, one has
\begin{equation}\label{est:SpecGap1main}
\big \la g, \As^\zeta g \big \ra_{L^2_\frac{\omega_\nu}{\zeta}}   \leq  \lambda_{N+1, \nu}\|g \|^2_{L^2_\frac{\omega_\nu}{\zeta}}.
\end{equation}
\end{proposition}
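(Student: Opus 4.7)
The plan is to carry out rigorous matched asymptotic expansions: construct an inner solution on the concentration scale $\zeta \sim \nu$, an outer solution on the parabolic scale $\zeta \sim 1$, and match them in the overlap zone $\nu \ll \zeta \ll 1$. Self-adjointness of $\As^\zeta$ in $L^2_{\frac{\omega_\nu}{\zeta}}$ is a direct integration by parts once one observes that the weight $\omega_\nu/\zeta$ is precisely the one conjugating $\As^\zeta$ to a symmetric second order operator (the Ornstein--Uhlenbeck weight $\rho_0$ away from the origin, deformed by $U_\nu$ near it); compactness of the resolvent then follows from a Rellich-type argument exploiting the Gaussian confinement at infinity, and gives a discrete spectrum bounded above whose eigenvalues we then identify.

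\textbf{Inner construction.} In the variable $r = \zeta/\nu$, one has $\As^\zeta = \nu^{-2}\As_0 - \beta r \partial_r$, so the eigenvalue equation reads $\As_0 \phi^{\inn} = \nu^2(\lambda + \beta r\partial_r)\phi^{\inn}$. Guided by the refined identity in the statement, I seek $\phi^{\inn}$ as a truncated expansion $\sum_{j=0}^n c_{n,j}\beta^{j}\nu^{2j-2}T_j(r)$ plus a correction. Since $\As_0 T_{j+1} = -T_j$, this Ansatz solves the equation order by order once the coefficients $c_{n,j}$ are fixed triangularly. Using the asymptotics $T_j(r) \sim \hat d_j r^{2j-2}\ln r$ for $j\geq 1$ (and $T_0 \sim r^{-2}$), the inner solution behaves for $r\to\infty$ like an affine combination of $r^{2j-2}\ln r$ and $r^{2j-2}$ whose coefficients depend affinely on $\tilde\alpha = \lambda/(2\beta)-(1-n)$. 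One must further allow a contribution from the second, singular element of $\ker\As_0$ to obtain a two-parameter family at infinity from which the matching will select the admissible one.

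\textbf{Outer construction and matching.} On $\zeta \sim 1$, $\As^\zeta$ converges formally to $\partial_\zeta^2 + 3\zeta^{-1}\partial_\zeta - \beta\zeta\partial_\zeta$, whose spectrum is $\{-2\beta n\}_{n\ge 0}$ with eigenfunctions given by rescaled generalised Laguerre polynomials in $\beta\zeta^2/2$. The true spectrum is shifted by $+2\beta$ because one solution dimension is absorbed by the boundary condition imposed at the inner scale: the outer solution bounded as $\zeta\to\infty$ for $\lambda = 2\beta(1-n+\tilde\alpha)$ admits, near $\zeta=0$, an expansion $A(\lambda,\beta)\zeta^{2n-2}+B(\lambda,\beta)\zeta^{2n-2}\ln\zeta+\cdots$ with coefficients extracted from confluent hypergeometric representations. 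Equating the inner expansion at $r\to\infty$ with the outer expansion at $\zeta\to 0$ (via $\ln r = \ln\zeta - \ln\nu$) produces a scalar matching equation $F(\tilde\alpha,\nu,\beta)=0$ with leading form
\begin{equation*}
2\tilde\alpha\ln\nu + 1 + O(1/|\ln\nu|) = 0,
\end{equation*}
to which the implicit function theorem applies and yields \eqref{def:specAsbmain}--\eqref{est:nu0ntilmain}. Pushing the matching one order further gives the explicit constant $(\ln 2 -\gamma - n - \ln\beta)/4$ for $n = 0,1$, while differentiating $F=0$ in $\nu$ and $\beta$ furnishes the derivative estimates on $\tilde\alpha_{n,\nu}$ and, after a parallel differentiation of the inner/outer representations, on the eigenfunctions themselves.

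\textbf{Pointwise, $L^2$ bounds, spectral gap, and the main obstacle.} The pointwise estimates \eqref{est:PhinPointEstmain} and the refined remainder bound on $\tilde\phi_{n,\nu}$ follow from applying $D_\zeta^k$ term by term to the inner/outer representations, using the explicit growth of $T_j$ and of the Laguerre modes; the factor $\zeta^2/(\zeta+\nu)^2$ encodes the shape of $T_0$ near $r=0$. For the $L^2_{\omega_\nu/\zeta}$ inner products \eqref{est:PhinL2norm1main} one splits the integral into the inner region (bounded contribution) and the outer region where $\omega_\nu/\zeta \sim \rho_0/\zeta$; the main contribution comes from a logarithmic integral $\int_\nu^1 \zeta^{-1}d\zeta \sim |\ln\nu|$ for $n=0$ and from a $(\ln\zeta)^2$ factor for $n\ge 1$, explaining the stated prefactors. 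Orthogonality then follows from self-adjointness combined with the non-degeneracy $\lambda_{n,\nu}\ne\lambda_{m,\nu}$, and the spectral gap \eqref{est:SpecGap1main} is the Courant--Fischer characterisation of $\lambda_{N+1,\nu}$ once the first $N+1$ eigenpairs are known. The main obstacle is the matching itself: because the $T_j$'s leading growth carries a logarithm, the matching equation depends on $\tilde\alpha$ through $\tilde\alpha\ln\nu$ rather than $\tilde\alpha$ alone, so bifurcation-from-a-limit-operator is unavailable and corrections must be tracked in powers of $1/|\ln\nu|$ instead of $\nu$; identifying the refinement constant $\ln 2 - \gamma - n - \ln\beta$ requires carrying both the inner and outer expansions two orders beyond leading and carefully tracking the residual constants produced by the confluent hypergeometric / exponential integral identities.
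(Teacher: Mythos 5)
Your proposal follows the paper's matched-asymptotics strategy at every level: inner expansion in $r=\zeta/\nu$ built from the iterated kernel $T_j$ of $\As_0$ plus corrections, outer expansion via Kummer/confluent-hypergeometric solutions of the scalar product-adapted limit operator, log-derivative matching in the overlap zone resolved by the implicit function theorem, and the correct observation that the corrections are polynomial in $1/|\ln\nu|$ rather than $\nu$ and must be carried one further order to obtain the refined constant $\ln 2-\gamma-n-\ln\beta$. The treatment of self-adjointness by conjugation/integration by parts, the Gaussian-confinement argument for compactness of the resolvent, the $L^2_{\omega_\nu/\zeta}$ normalisations, and the pointwise bounds by term-by-term differentiation are all consistent with what the paper does.

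There is, however, a genuine gap. Nothing in the proposal shows that the $N+1$ eigenvalues produced by the matching are actually the $N+1$ \emph{largest} eigenvalues. The implicit function theorem yields, for each fixed $n$, a unique $\bar\alpha_n$ of size $O(|\ln\nu|^{-2})$ near zero; it does not preclude a rogue eigenvalue $\mu$ of $\As^\zeta$ lying strictly between two of the $\lambda_{n,\nu}$ that the ansatz simply misses. This is fatal for the spectral gap (iii): if such a $\mu\in(\lambda_{N+1,\nu},\lambda_{0,\nu})$ existed with $\mu\neq\lambda_{j,\nu}$ for all $j$, its eigenfunction $\psi$ would be orthogonal to every $\phi_{j,\nu}$ by self-adjointness, and then $\langle\As^\zeta\psi,\psi\rangle_{L^2_{\omega_\nu/\zeta}}=\mu\|\psi\|^2>\lambda_{N+1,\nu}\|\psi\|^2$, contradicting \eqref{est:SpecGap1main}. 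Invoking Courant--Fischer ``once the first $N+1$ eigenpairs are known'' is circular precisely because one must first establish that the constructed pairs \emph{are} the first $N+1$. The paper closes this hole with a Sturm--Liouville zero-counting argument: it verifies that the matched $\phi_n$ has exactly $n$ zeros on $(0,\infty)$ (one inside the inner region for $n\geq 1$, and $n-1$ from the Kummer factor in the outer region), and deduces that any eigenvalue in the gap $(\alpha_{n+1},\alpha_n)$ would force its eigenfunction to have a non-integer number of sign changes, a contradiction; it also rules out eigenvalues above $\alpha_0$ and multiplicity greater than one. You should add this qualitative step, or an equivalent, to the proposal. A small further remark: the leading form of the matching relation should read $2\tilde\alpha\ln\nu-1+O(1/|\ln\nu|)=0$ rather than $+1$, since $\ln\nu<0$ and $\tilde\alpha_{n,\nu}=\tfrac{1}{2\ln\nu}+\cdots$ is negative.
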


\begin{remark}
We recover the same eigenvalues as \cite{DphysD12}. Though our proof is similar to \cite{DphysD12} since relying on matched asymptotics, we here adopt the approach of \cite{CMRarx17}, yielding detailed information on the eigenfunctions and on the variations with respect to the parameter $\nu$. We also mention that the matching procedure performed in \cite{DphysD12} was formal as the analysis did not involve the matching of derivatives. To match the derivatives, we found a degeneracy that forces us to expand both inner and outer solutions to the next order, which renders the analysis much more involved.
\end{remark}

\begin{remark} \label{rk:blowup} Based on Proposition \ref{prop:SpecRad}, we are able to construct for the problem \eqref{eq:KS} in \cite{CGNNarx19b} finite time blowup solutions according to the following precise asymptotic dynamics as $t \to T$:
$$u(x,t) \sim \frac{1}{\eta^2(t)}U \big(\frac{x}{\eta(t)}\Big),$$
where the blowup rate is given by either 
$$\eta(t) = 2e^{-\frac{\gamma + 2}{2}} \sqrt{T-t} e^{-\sqrt{\frac{|\ln (T-t)|}{2}}}(1+o_{t\uparrow T}(1)),$$
or 
$$ \eta(t) \sim C(u_0)(T-t)^\frac{\ell}{2} |\ln (T-t)|^{-\frac{\ell + 1}{ 2(\ell - 1)}} \quad \textup{for some } \;\; \ell \in \mathbb{N}, \;\; \ell \geq 2.$$  
It is worth saying that the rigorous analysis performed in \cite{CGNNarx19b} is greatly simplified thanks to Proposition \ref{prop:SpecRad} in comparison with the one of \cite{RSma14}. Importantly, we believe that the precise description of the spectrum of $\As^\zeta$ is one of the crucial steps toward the classification of all possibilities of blowup speeds for \eqref{eq:KS} (at least in the radial setting) which is a challenging problem in the analysis of blowup.
\end{remark}

\begin{remark} 

The present result deals with the critical Keller-Segel system. We believe that other critical problems can be studied with this approach, such as the harmonic heat flow and the semilinear heat equation. Related spectral studies were performed in the case of non-degenerate self-similar singularities for wave type equations, see for example \cite{CDGcmp17}, \cite{CDXnonl16} for the study of stability of self-similar wave maps. It is an interesting direction to implement the present work to the hyperbolic setting.
\end{remark}

Our second result aims at understanding under what kind of perturbations is Proposition \ref{prop:SpecRad} stable. This is of a particular importance for the full nonlinear problem \fref{eq:KS} analysed in \cite{CGNNarx19b}, and shows the robustness of our approach. As a direct consequence of our construction, the spectral properties of $\As^\zeta$ stated in the previous proposition still hold true for the following perturbed operator of the form
\begin{equation} \label{id:modifiedop}
\bar \As^\zeta = \As^\zeta + \frac{1}{\zeta}\pa_\zeta \left(P \cdot \right),
\end{equation}
where the  perturbation $P$ satisfies
\begin{equation} \label{id:conditionP}
 |P(\zeta)| + |\zeta\pa_\zeta P(\zeta)| \lesssim \frac{\nu^2}{|\ln \nu|} \frac{\zeta^2 }{(\nu^2 + \zeta^2)^2}.
 \end{equation}

\begin{proposition} \label{pr:spectralbarAzeta}
Assume the bound \fref{id:conditionP} and the same hypotheses as in Proposition \ref{prop:SpecRad}. Then, the operator $\bar \As^\zeta: H^2_{\frac{\bar \omega_\nu}{\zeta}} \rightarrow L^2_{\frac{\bar \omega_\nu}{\zeta}}$ is essentially self-adjoint with compact resolvent, where
$$\bar \omega_\nu(\zeta)=\omega_\nu(\zeta) \exp \left( \int_0^\zeta \frac{P(\tilde \zeta)}{\tilde \zeta}d\tilde \zeta\right).$$
The first $N+1$ eigenvalues $\{\bar \lambda_{n, \nu}\}_{0\leq n\leq N}$ of $\bar \As^\zeta$ satisfy
\be \label{bd:stabilityalpha}
|\bar \lambda_{n, \nu}-\lambda_{n, \nu} |\leq \frac{C'}{|\log \nu|^2},
\ee
and there exist associated eigenfunctions $\{\bar \phi_{n,\nu}\}_{0 \leq n \leq N}$ satisfying
\be \label{bd:stabilityeigenmodes}
\frac{\| \bar \phi_{n,\nu}-\phi_{n,\nu} \|_{L^2(\frac{\omega_\nu}{\zeta})}}{\| \phi_{n,\nu} \|_{L^2(\frac{\omega_\nu}{\zeta})}}\leq \frac{C'}{\sqrt{|\log \nu|}}.
\ee

\end{proposition}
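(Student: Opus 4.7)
The first step is to check that $\bar\omega_\nu$ makes $\bar\As^\zeta$ symmetric. A direct computation shows that the multiplicative factor $\exp\bigl(\int_0^\zeta P(\tilde\zeta)/\tilde\zeta \, d\tilde\zeta\bigr)$ is tailored so that
$$\pa_\zeta \ln(\bar\omega_\nu/\zeta) = \pa_\zeta \ln(\omega_\nu/\zeta) + P/\zeta,$$
which absorbs the drift part of $\frac{1}{\zeta}\pa_\zeta(P\cdot)$ into the Sturm-Liouville form. The easy bound $\bigl|\int_0^\zeta P(\tilde\zeta)/\tilde\zeta\,d\tilde\zeta\bigr| \lesssim 1/|\ln\nu|$ (a direct integration of \eqref{id:conditionP}) then yields $\bar\omega_\nu/\omega_\nu = 1 + O(1/|\ln\nu|)$ uniformly in $\zeta$, so the two weighted spaces coincide with equivalent norms. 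Essential self-adjointness and compactness of the resolvent for $\bar\As^\zeta$ follow from the corresponding properties of $\As^\zeta$ proved in Proposition \ref{prop:SpecRad}, viewing the perturbation as a relatively bounded correction.

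For the eigenvalue estimate \eqref{bd:stabilityalpha} my plan is to invoke the Courant-Fischer min-max principle. Writing
$$\langle g, \bar\As^\zeta g\rangle_{L^2_{\bar\omega_\nu/\zeta}} = \langle g, \As^\zeta g\rangle_{L^2_{\omega_\nu/\zeta}} + R(g),$$
the remainder $R(g)$ collects both the weight mismatch and the direct $P$-contribution. After integration by parts, a combination of \eqref{id:conditionP}, the derivative bounds \eqref{est:PhinPointEstmain}, and the norm estimates \eqref{est:PhinL2norm1main} should produce $|R(g)| \lesssim |\ln\nu|^{-2}\|g\|_{L^2_{\omega_\nu/\zeta}}^2$ on the trial subspace spanned by $\phi_{0,\nu},\dots,\phi_{n,\nu}$ and on its $L^2_{\bar\omega_\nu/\zeta}$-orthogonal complement. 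Feeding these into min-max, with the spectral gap \eqref{est:SpecGap1main} handling the orthogonal complement, gives \eqref{bd:stabilityalpha}.

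For the eigenfunction estimate \eqref{bd:stabilityeigenmodes} I would use Riesz spectral projectors. Since the $\lambda_{n,\nu}$'s are simple with spectral gaps of order $\beta$ independent of $\nu$, and by the previous step $|\bar\lambda_{n,\nu}-\lambda_{n,\nu}|\lesssim |\ln\nu|^{-2}$, a contour $\Gamma_n$ around $\lambda_{n,\nu}$ of fixed small radius isolates both $\lambda_{n,\nu}$ and $\bar\lambda_{n,\nu}$ from the rest of their respective spectra for $\nu$ small. The second resolvent identity gives
$$\bar\Pi_n - \Pi_n = \frac{1}{2\pi i}\oint_{\Gamma_n}(z-\bar\As^\zeta)^{-1}(\bar\As^\zeta-\As^\zeta)(z-\As^\zeta)^{-1}\,dz,$$
and defining $\bar\phi_{n,\nu}$ as a suitable normalization of $\bar\Pi_n\phi_{n,\nu}$ reduces \eqref{bd:stabilityeigenmodes} to the quantitative bound
$$\|(\bar\As^\zeta-\As^\zeta)\phi_{n,\nu}\|_{L^2_{\omega_\nu/\zeta}}\big/\|\phi_{n,\nu}\|_{L^2_{\omega_\nu/\zeta}}\lesssim 1/\sqrt{|\ln\nu|},$$
which follows by combining \eqref{id:conditionP}, \eqref{est:PhinPointEstmain}, and \eqref{est:PhinL2norm1main}.

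The hard part will be extracting the sharp exponent $|\ln\nu|^{-2}$ in \eqref{bd:stabilityalpha}: a crude Cauchy-Schwarz bound on the matrix element of $\frac{1}{\zeta}\pa_\zeta(P\cdot)$ only yields $|\ln\nu|^{-1}$ because $P$ itself is of size $|\ln\nu|^{-1}$. The extra factor must be recovered from the observation that the $L^2_{\omega_\nu/\zeta}$ mass of $\phi_{n,\nu}$ is concentrated in the matching region $\nu\ll \zeta\ll 1$, which is precisely what produces the $|\ln\nu|$ factor in \eqref{est:PhinL2norm1main}, whereas the perturbation $\frac{1}{\zeta}\pa_\zeta(P\cdot)$ only probes $\phi_{n,\nu}$ on the inner scale $\zeta\sim\nu$, where $\phi_{n,\nu}$ carries a merely $O(1)$ share of that mass. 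Exploiting the explicit $T_j$-expansion from Proposition \ref{prop:SpecRad}(ii) to compute the matrix element along the inner profile, and using the pointwise controls on $\nu\pa_\nu\phi_{n,\nu}$ in \eqref{est:PhinPointEstmain}, should then provide the additional gain of $|\ln\nu|^{-1}$.
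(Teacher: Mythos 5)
Your plan for the eigenfunction bound \eqref{bd:stabilityeigenmodes} via Riesz spectral projectors fails at the quantitative step. The required estimate $\|(\bar\As^\zeta-\As^\zeta)\phi_{n,\nu}\|_{L^2_{\omega_\nu/\zeta}}/\|\phi_{n,\nu}\|_{L^2_{\omega_\nu/\zeta}}\lesssim 1/\sqrt{|\ln\nu|}$ is false. The perturbation $\zeta^{-1}\partial_\zeta(P\cdot)$ has $L^\infty$ size $\sim\nu^{-2}/|\ln\nu|$ near the origin (this is exactly the obstruction flagged in Remark \ref{rk:blowup}'s companion, Remark before Proposition \ref{pr:spectralbarAzeta}), and $\phi_{n,\nu}$ also concentrates there with amplitude $\sim\nu^{-2}$. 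A direct computation gives $\|(\bar\As^\zeta-\As^\zeta)\phi_{n,\nu}\|_{L^2_{\omega_\nu/\zeta}}\sim\nu^{-2}|\ln\nu|^{-1}$ while $\|\phi_{n,\nu}\|_{L^2_{\omega_\nu/\zeta}}\sim\sqrt{|\ln\nu|}$, so the ratio $\sim\nu^{-2}|\ln\nu|^{-3/2}$ diverges. The second-resolvent-identity integral cannot be tamed by such an operator norm; the perturbation is simply not relatively bounded, and this also means your stated justification for self-adjointness and compact resolvent ("relatively bounded correction") is not the right mechanism, although the conclusion still holds via the Sturm--Liouville structure you correctly identified.

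Your min-max heuristic for \eqref{bd:stabilityalpha} is closer, but you misdiagnose where the gain comes from. You say a "crude Cauchy--Schwarz bound on the matrix element only yields $|\ln\nu|^{-1}$ because $P$ itself is of size $|\ln\nu|^{-1}$"; without the integration-by-parts cancellation from the divergence form, the crude bound on $\int(\partial_\zeta P/\zeta)\,\phi_{n,\nu}^2\,\omega_\nu/\zeta\,d\zeta$ is actually $\sim\nu^{-2}/|\ln\nu|$, not $|\ln\nu|^{-1}$. What saves the day is that after integrating by parts, one must estimate $-\int P\,\phi_{n,\nu}\,\partial_\zeta(\phi_{n,\nu}\bar\omega_\nu/\zeta^2)\,d\zeta$, and the product $\phi_{n,\nu}\,\omega_\nu/\zeta^2$ is to leading order $\rho_0/8$ near the origin (because $\phi_{n,\nu}\approx\nu^{-2}T_0(\zeta/\nu)=\zeta^2/(\nu^2+\zeta^2)^2$ while $\omega_\nu/\zeta^2=(\nu^2+\zeta^2)^2\rho_0/(8\zeta^2)$); this algebraic cancellation is what reduces the matrix element from $\nu^{-2}/|\ln\nu|$ to $O(1/|\ln\nu|)$, and only then does dividing by $\|\phi_{n,\nu}\|^2\sim|\ln\nu|$ produce $|\ln\nu|^{-2}$. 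You invoke the right ingredients (the $T_j$-expansion, concentration of mass) but do not identify the cancellation, which is the whole crux.

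The paper's route is entirely different and does not use abstract perturbation theory at all: it re-runs the matched asymptotic construction for $\bar\As^\zeta$. Lemma \ref{lem:perturbationinner} builds a perturbed inner eigenfunction by a fixed-point argument, and the divergence form $r^{-1}\partial_r(V\cdot)$ is exactly what makes the key inner resolvent integral $\int_0^r\xi\cdot\xi^{-1}\partial_\xi(Vf)\,d\xi=V(r)f(r)$ benign (no accumulation at the origin, no derivative loss); this is the concrete manifestation of the "orthogonality to the resonance" mentioned in the remark after Proposition \ref{pr:spectralbarAzeta}. Lemma \ref{lem:perturbationouter} handles the outer zone where the perturbation is genuinely small. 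Corollary \ref{coro:perturbedspectral} repeats the matching to produce $\bar\lambda_{n,\nu}$, and \eqref{bd:stabilityeigenmodes} then follows directly from the pointwise bound $\|\phi^{\inn,V}_n-\phi^{\inn}_n\|_{\Ic^{-2}_0}\lesssim 1/|\ln b|$ of Lemma \ref{lem:perturbationinner} together with the corresponding outer bound, not from spectral projectors. Given the magnitude of the perturbation in operator norm, redoing the construction at the ODE level is not optional; your spectral-perturbation framework cannot be patched to give \eqref{bd:stabilityeigenmodes}.
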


\begin{remark} Note that Proposition \ref{pr:spectralbarAzeta} is not a direct consequence of Proposition \ref{prop:SpecRad} in the sense that a standard perturbation argument does not work here. Indeed, the potential part $\pa_\zeta P/\zeta$ of the perturbation in \fref{id:modifiedop} is of size $\nu^{-2}$ in $L^{\infty}$ (up to a logarithmic accuracy), while the eigenvalues of the unperturbed operator $\As^\zeta$ are of order $1$. The crucial point is that the algebraic form of the perturbation, $\pa_\zeta (P\cdot)/\zeta$, ensures its orthogonality to the resonance of the operator $\As_0$ near the origin, see Lemma \ref{lem:perturbationinner} and its proof.

\end{remark}

\begin{remark} In \cite{CGNNarx19b}, the use of Proposition \ref{pr:spectralbarAzeta} is essential to handle nonlinear terms, where the precise control of the solution near the origin involves the rescaled stationary state at a slightly different scale $\tilde \nu$, and the corresponding perturbed linear operator is \fref{id:modifiedop} with the perturbation potential
$$
P(\zeta) = \frac{Q_{\tnu}(\zeta) - Q_\nu(\zeta)}{2}, \quad \left|\frac{\tnu}{\nu}-1\right|\lesssim \frac{1}{|\log \nu|},
$$
and the corresponding weight function $\bar \omega_\nu(\zeta) = \frac{\nu \tilde{\nu}}{\sqrt{U_\nu U_{\tnu}}} \rho(\zeta).$
\end{remark}

Our third and last result concerns the decay of the linearised dynamics associated to $\Ls^z$ for the nonradial part of the perturbation. The work \cite{Vsiam02} provides Lyapunov functionals for the inner zone $|z|\sim \nu$, and \cite{RSma14} uses a suitable extension to a higher regularity level of these Lyapunov functionals, inspired from \cite{RR}. Both results do not include the scaling term in the functionals, making the control in the parabolic region $|z|\sim 1$ hard. We prove here a coercivity estimate that takes the scaling term into account, for a modified version of the linearised operator, in which the source term for the Poisson field is localised near the origin. Note that an analogue of the radial spectral analysis of Proposition \ref{prop:SpecRad} is not straightforward. Indeed, while the operator $\nabla \Delta^{-1}$ is an integral operator from the origin in the radial case, the integral involve the behaviour of the function at infinity on higher order spherical harmonics, see \eqref{id:nablaPhi0} and \eqref{id:nablaPhiki}. In particular, it is not possible to make sense of $\nabla \Delta^{-1}$ for nonradial functions with strong polynomial growth at infinity.\\

\noindent On the one hand, at the $|z|\sim \nu$ scale, there is a natural scalar product for the linearised operator without scaling term, coming from the free energy. The following corresponds to \cite{RSma14}, Lemma 2.1 and Proposition 2.3. The linearized operator at scale $\nu$ is written as
\be
 \Ls_0 u =\Delta u-\nabla  \cdot (u \nabla \Phi_U)-\nabla \cdot (U\nabla  \Phi_u)= \nabla \cdot(U \nabla \Ms u) \quad \textup{with} \quad \Ms u = \frac{u}{U} - \Phi_u, \label{def:Lsform0}
\ee
The quadratic form $\int u\Ms vdy$ is symmetric. There hold the estimates if $\int udy=0$:
\begin{equation}\label{eq:contM}
\int_{\Rb^2} U |\Ms u|^2dy \lesssim \int_{\Rb^2} \frac{u^2}{U}dy,
\end{equation}
the nonnegativity $\int u\Ms u \geq0$ and, for some  $\delta_1,C>0$,
\be \label{bd:coercivite L2}
\int_{\Rb^2} u\Ms u dy \geq \delta_1\int_{\Rb^2} \frac{u^2}{U}dy - C\Big[ \la u, \Lambda U\ra_{L^2}^2  + \la u, \pa_1 U\ra_{L^2}^2 + \la u, \pa_2 U\ra_{L^2}^2 \Big],
\ee
where $\Lambda$ is the scaling group infinitesimal generator:
$$
\Lambda u = 2u+y.\nabla u.
$$

For functions orthogonal to $\Lambda U,\pa_{y_1}U,\pa_{y_2}U$ in the $L^2$ sense, the norms defined by $\int \frac{u^2}{U}dy$ and $\int u\Ms udy$ are then equivalent. On the other hand, at scale $|z|\sim 1$, from \fref{def:U} and as $\partial_\zeta \Phi_{U_\nu}=-4\zeta/(\nu^2+\zeta^2)$ we get that $\Ls^z$ converges pointwise to $\Delta+4/\zeta \pa_\zeta-\beta\nabla.(z\cdot)$ as $\nu \rightarrow 0$. This operator is self adjoint in $L^2(\zeta^{4}\rho_0)$. We thus introduce the "mixed" scalar product
\begin{equation}\label{def:quadform}
\langle u,v\rangle_\ast:= \nu^2\int_{\Rb^2} u\sqrt{\rho_0}\Ms^z (v \sqrt{\rho_0})dz, \quad \Ms^zf = \frac{f}{U_\nu} - \Phi_f.
\end{equation}
It matches to leading order the first scalar product at scale $\nu$ and the second at scale $1$, and localises the Poisson field. It is equivalent to the $L^2_{\omega_\nu}$ scalar product under the aforementioned orthogonality conditions. We localize the Poisson field in the linearized operator accordingly,
\be \label{def:tildeLz}
\tilde \Ls^z u :=\Delta u-\nabla  \cdot (u \nabla \Phi_{U_\nu})-\nabla \cdot (U_\nu\nabla \tilde \Phi_u)-\beta\nabla \cdot (zu), \quad  \tilde \Phi_u:= \frac{1}{\sqrt{\rho_0}}(-\Delta)^{-1}(\sqrt{\rho_0} u).
\ee
We show that in the non-radial sector, the localised operator $\tilde \Ls^z$ is coercive for the mixed scalar product $\langle \cdot ,\cdot \rangle_*$ under the natural orthogonality assumption to $\nabla U_\nu$. Its proof adapts the arguments of \cite{RSma14} for the above coercivity of $\Ls_0$ to the range $0<\nu \ll 1$.

\begin{proposition}\label{pr:coercivitenonradial}
For any $0<\beta_*<\beta^*$, there exists $c,C>0$ and $\nu^*>0$ such that for all $\beta_*<\beta<\beta^*$ and $0<\nu \leq \nu^*$, if $u\in \dot H^1_{\omega_{\nu}}$ satisfies $\int_{|z|=\zeta}udz=0$ for almost every $\zeta$, then:
\begin{equation}\label{est:coerLznonradial}
\langle -\tilde{\Ls}^z  u,u\rangle_\ast \geq c \| \nabla u \|_{L^2_{\omega_\nu}}^2 - C\nu^6 \left( \left(\int_{\Rb^2} u\partial_{z_1}U_\nu \sqrt{\rho_0}dz\right)^2+\left(\int_{\Rb^2} u\partial_{z_2}U_\nu\sqrt{\rho_0}dz\right)^2\right).
\end{equation}
\end{proposition}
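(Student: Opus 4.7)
Following the approach of \cite{RSma14} (Lemma 2.1 and Proposition 2.3), the plan is to combine coercivity estimates at the two scales $|z|\sim\nu$ and $|z|\sim 1$, which the localisation $\sqrt{\rho_0}$ in $\tilde\Phi_u$ has been designed to make compatible. The starting algebraic identity is
$$
\sqrt{\rho_0}\,\Ms^z(u\sqrt{\rho_0})=\rho_0\,\tilde\Ms^z u,\qquad \tilde\Ms^z u:=\frac{u}{U_\nu}-\tilde\Phi_u,
$$
which gives $\langle u,u\rangle_\ast=\nu^2\int u\rho_0\tilde\Ms^z u\,dz$. Combined with the divergence form $\tilde\Ls^z u=\nabla\cdot[U_\nu\nabla\tilde\Ms^z u]-\beta\nabla\cdot(zu)$ (using the well-known identity $\nabla\Phi_{U_\nu}=\nabla\log U_\nu$) and $\nabla\rho_0=-\beta z\rho_0$, a single integration by parts produces the dissipation identity
$$
\langle -\tilde\Ls^z u,u\rangle_\ast=\nu^2\int U_\nu\rho_0\,|\nabla\tilde\Ms^z u|^2\,dz+R[u],
$$
where $R[u]$ collects three cross terms carrying factors $\beta z\rho_0$ or $\beta^2|z|^2\rho_0$. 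Expanding $\nabla\tilde\Ms^z u = \nabla u/U_\nu - u\nabla U_\nu/U_\nu^2 - \nabla\tilde\Phi_u$, the leading piece of the main term equals $\nu^2\int\rho_0|\nabla u|^2/U_\nu+\cdots=\|\nabla u\|_{L^2_{\omega_\nu}}^2+\cdots$, so it already contains the quantity to be bounded from below.

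I would then split space via a smooth cutoff $\chi_A(z):=\chi(|z|/A\nu)$ with $A\gg 1$, and treat the two regions separately. \emph{Inner region} $|z|\lesssim A\nu$: rescale $y=z/\nu$, $\tilde u(y):=u(\nu y)$. Up to errors of order $\nu^2$ from the scaling drift and from $\sqrt{\rho_0}=1+O(\nu^2|y|^2)$ on $|y|\leq A$, the principal part reduces to $\nu^{-2}\Ls_0$, so the coercivity \eqref{bd:coercivite L2} applies to $\tilde u$. The nonradiality assumption $\int_{|z|=\zeta}u\,dz=0$ cancels the $\langle\tilde u,\Lambda U\rangle$ contribution automatically, while the $\langle\tilde u,\partial_{y_i}U\rangle$ corrections unscale, via $\langle\tilde u,\partial_{y_i}U\rangle=\nu\int u\,\partial_{z_i}U_\nu\,dz$, to exactly the $\nu^6\bigl(\int u\,\partial_{z_i}U_\nu\sqrt{\rho_0}\,dz\bigr)^2$ terms of \eqref{est:coerLznonradial}. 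To pass from the $L^2(U^{-1})$ bound to the $H^1$-level required, I would use that the rescaled dissipation $\int U|\nabla\Ms\tilde u|^2\,dy$ expands into $\int|\nabla\tilde u|^2/U\,dy$ plus cross terms controlled under the same orthogonalities, by a Hardy--Poincaré argument in the spirit of Lemma 2.2 of \cite{RSma14}; after unscaling this produces the inner contribution $c\|\nabla u\|_{L^2_{\omega_\nu}}^2$ on $|z|\lesssim A\nu$. The inner piece of $R[u]$ is $O(\nu^2)$ smaller than the main term by the scaling $|z|\sim\nu$, hence absorbed.

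\emph{Outer region} $|z|\gtrsim A\nu$: here $U_\nu\rho_0\sim 8\nu^2\rho_0/|z|^4$ and $\omega_\nu\sim |z|^4\rho_0/8$. A direct computation shows that the principal piece of the Dirichlet term $\nu^2\int U_\nu\rho_0|\nabla\tilde\Ms^z u|^2$ again equals $\|\nabla u\|_{L^2_{\omega_\nu}}^2$ to leading order, while the Gaussian localisation of the source $u\sqrt{\rho_0}$ keeps $U_\nu\rho_0|\nabla\tilde\Phi_u|^2$ uniformly integrable and dominated by the Laplace dissipation. The resulting principal operator
$$
L:=\Delta+\tfrac{4}{\zeta}\partial_\zeta-\beta z\cdot\nabla
$$
is essentially self-adjoint on $L^2(|z|^4\rho_0)$, diagonalised by products of generalised Laguerre polynomials and spherical harmonics, with a spectral gap uniformly bounded away from zero on the nonradial sector; the associated weighted Poincaré inequality controls the cross terms of $R[u]$ in this zone. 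The $\partial_{z_i}U_\nu$ correction contributes negligibly here since $\partial_{z_i}U_\nu$ is concentrated on $|z|\lesssim\nu$. Summing the two regional lower bounds, and absorbing the cutoff commutators (supported in $|z|\sim A\nu$ and bounded by $A^{-2}$ times either side) by taking $A$ large then $\nu$ small, concludes the proof. \emph{The main obstacle} I foresee is precisely this matching in the transition zone $A\nu\lesssim|z|\lesssim 1$, where the $\Ls_0$-coercivity loses strength, the spectrum of $L$ has not yet emerged, and the nonlocal Poisson piece $\nabla\tilde\Phi_u$ must be quantitatively dominated by the Dirichlet dissipation; the exponential suppression of $u\sqrt{\rho_0}$ built into $\tilde\Phi_u$ is the essential ingredient that makes the perturbative treatment of $L$ in the outer zone, and the coupling estimate between the two scales, robust.
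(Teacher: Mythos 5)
Your algebraic setup is essentially the paper's: the divergence form $\tilde\Ls u=\nabla\cdot(U\nabla\tilde\Ms u-byu)+\text{lower order}$, the identity relating $\langle\cdot,\cdot\rangle_*$ to the free-energy quadratic form, and the observation that the principal piece of the Dirichlet term already equals $\|\nabla u\|_{L^2_\omega}^2$ all appear in the paper (see the decomposition $\langle -\tilde\Ls u,u\rangle_*=F(u,u)+G(u,u)+2b\langle u,u\rangle_*$ and the subcoercivity estimate). Your rescaling and the role of \eqref{bd:coercivite L2} for the $\nu^6$ correction terms are also correctly identified.

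However, the strategy you propose after that point is genuinely different from the paper's, and it has a gap that you yourself flag but do not close. You plan a spatial cutoff at $|z|\sim A\nu$, an inner estimate from $\Ls_0$-coercivity, and an outer estimate from the limit operator $L=\Delta+\tfrac{4}{\zeta}\partial_\zeta-\beta z\cdot\nabla$ on $L^2(|z|^4\rho_0)$, with commutator terms absorbed in the overlap. The difficulty is that in the region $A\nu\lesssim|z|\lesssim 1$ neither the inner $\Ls_0$-spectral gap (which is sharp only at scale $\nu$, as $U_\nu$ decays like $|z|^{-4}$ there) nor the outer $L$-spectral gap (which governs scale $1$) gives effective control of $\int_{|z|\sim A\nu}u^2$, and the cutoff commutator terms live precisely there; your proposed absorption \emph{by taking $A$ large then $\nu$ small} is exactly the step the paper does not know how to (and does not need to) make quantitative. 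The paper instead proves a \emph{global} subcoercivity estimate \eqref{bd:subcoercivity} valid on the whole space and then runs a compactness/rigidity argument: take a contradicting sequence $(u_n,b_n)$, pass to a weak limit $f_\infty$ in $\dot H^1((1+|y|^4)dy)$ as $b_n\to 0$, use Lemma \ref{lemm:coerML0} to conclude $\Ms f_\infty=0$, hence $f_\infty\in\mathrm{Span}(\partial_{y_1}U,\partial_{y_2}U)$, contradicting the orthogonality passed to the limit. The transition region never appears explicitly: as $b\to 0$ the exponential weight opens up, the outer zone recedes to infinity, and the limit problem is exactly the whole-space $\Ls_0$ coercivity of Lemma \ref{lemm:coerML0}. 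This is the key idea your proposal is missing; with a cutoff one would need a new quantitative estimate in the overlap, whereas the compactness route sidesteps it entirely (and also delivers uniformity in $b$ for free). Your Lemma 2.2-style $\dot H^1$ coercivity of $\Ls_0$ is indeed proved in the paper as Lemma \ref{lemm:coerML0}, so that ingredient is correct, but you would also need to actually establish the claimed uniform spectral gap of $L$ on the nonradial sector and prove the continuity of the truncated Poisson field in the outer weight — neither of which is carried out.
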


\begin{remark}
The above Proposition holds for $\tilde \Ls^z$ instead of $\Ls^z$: a part of the Poisson field outside the origin has been neglected. However, in the singularity formation studied in \cite{CGNNarx19b} the worst contribution to this field from the perturbation comes from the origin, and the stationary states decays rapidly at infinity. The difference $\Ls^z-\tilde \Ls^z$ can then be controlled from other norms, see \cite{CGNNarx19b}.
\end{remark}

The paper is organised as follows. Section \ref{sec:lin} is devoted to the proof of Propositions \ref{prop:SpecRad} and \ref{pr:spectralbarAzeta}. The proof of Proposition \ref{pr:coercivitenonradial} is done in Section \ref{sec:CoerNonR}.\\

\paragraph{\textbf{Acknowledgement:}} C. Collot is supported by the ERC- 2014-CoG 646650 SingWave. N. Masmoudi is supported by NSF grant DMS-1716466. This work was  supported  by Tamkeen under the NYU Abu Dhabi Research Institute grant of the center SITE. Part of this work was done while C. Collot, T.-E. Ghoul and N. Masmoudi were visiting IH\'ES and they thank the institution.

\section{Proof of the spectral Propositions \ref{prop:SpecRad} and \ref{pr:spectralbarAzeta}} \label{sec:lin}
This section is devoted to the proof of Proposition \ref{prop:SpecRad}. After the change of variable $\zeta = \nu r$, the problem \eqref{eq:eiPb} is equivalent to the following
\begin{equation}\label{eq:phi}
(\As_0 - br\partial_r) \phi  = \alpha \phi, \quad r \in [0,\infty),
\end{equation}
where $\As_0$ is introduced in \eqref{def:As} and 
$$b = \beta \nu^2, \quad \alpha = \lambda \nu^2.$$
We will solve the problem \fref{eq:phi} in the regime $0 < b \ll 1$ by means of matched asymptotic expansions in the following sense. Let $\zeta_0$ and $R_0$ be fixed as
$$ 0 < \zeta_0 \ll 1, \quad  R_0 = \frac{\zeta_0}{\sqrt{b}} \gg 1.$$
Relying on perturbation theory, we first solve \fref{eq:phi} in the inner region $r \leq R_0$, and the solution is named by $\phi^{\inn}$, then in the outer region $r \geq R_0$ and the solution is named by $\phi^\out$. The two solutions must coincide at $r = R_0$ up to the first derivative from which we determine the value of $\alpha$ by standard arguments based on the implicit function theorem. Proposition \ref{prop:SpecRad} is a direct consequence of the following.


\begin{proposition}[Spectral properties of $\As = \As_0 - br\pa_r$] \label{prop:SpecRadp} The linear operator $\As$ is self-adjoint in $L^2(\Rb^+,\omega_b dr)$ with $\omega_b^{-1} =rU e^{\frac{br^2}{2}}$ with compact resolvent. Given any $N\in \mathbb N$, and $0<\delta \ll 1$, there exists a $b^*>0$ such that the following holds for all $0<b \leq b^*$:\\
\noindent $(i)$ \textup{(Eigenvalues)} We have that the first $N+1$ eigenvalues are given by:
\begin{equation}\label{def:specAsb}
\alpha_{n} = 2b\Big(1 - n + \tilde{\alpha}_{n}\Big), \qquad n=0,...,N,
\end{equation}
where:
\begin{equation}\label{est:nu0ntil}
\tilde{\alpha}_n=\frac{1}{\ln b}+\bar \alpha_n \quad  \mbox{with } \quad  | \bar{\alpha}_n| + \big|b\pa_b \tilde \alpha_n\big| \lesssim \frac{1}{|\ln b|^{2}}.
\end{equation}
In particular, we have a refinement of the first two eigenvalues, with $\gamma$ the Euler constant:
\begin{equation}\label{est:nurefinedn=01}
\left| \tilde \alpha_n - \frac{1}{\ln b} -  \frac{\ln 2 - \gamma - n}{|\ln b|^2} \right| \lesssim \frac{1}{|\ln b|^3}, \quad \mbox{for }n=0,1.
\end{equation}

$(ii)$ \textup{(Eigenfunctions)} An eigenfunction $\phi_{n}$ is defined by \fref{def:Mn} and the following properties hold:
\begin{itemize}
\item[-] \textup{(Sign-changing)} On the interval $[0,\infty)$, $\phi_0$ has constant sign, and $\phi_n$ vanishes exactly $n$ times for $n\geq 1$.
\item[-] \textup{(Orthogonality)} For some positive constants $\bar e_n$ there holds:
\begin{equation}\label{est:PhinL2norm}
\forall (m,n) \in \mathbb{N}^2, \quad \langle \phi_n, \phi_m \rangle^2_{L^2_{\omega_b}} = c_{n}\delta_{m,n},  \qquad c_{n} \sim \left\{\begin{array}{ll} 2^{-4} |\ln b|^{n + 1} & \; \textup{for}\; n = 0,1,\\
\bar e_n|\ln b|^2& \; \textup{for}\; n \geq 2.
\end{array} \right.
\end{equation}
\item[-] \textup{(Pointwise estimates)} For $ k = 0,1,2$,
\begin{align}\label{est:PhinPointEst}
\left| D_r^k\phi_{n}(r)\right| + \left|D_r^k b\partial_b \phi_{n}(r) \right| & \lesssim  \left(\frac{r}{\langle r \rangle}\right)^{2-(k\; \textup{mod}\; 2)} \langle r \rangle^{-2-k}\langle \sqrt b r \rangle^{2n+\delta}(1+\mathbf{1}_{\{n \geq 1\}} \ln \langle r\rangle).
\end{align}

\item[-] \textup{(Refined pointwise estimates)} There holds the refined identity:
\be \label{id:deftildephin}
 \phi_{n}(r) = \sum_{j = 0}^n c_{n,j}b^{j} T_j\big(r\big) + \tilde{\phi}_{n}(r),
\ee
where $T_j$ and $c_{n,j}$ are defined in Lemma \ref{lemm:proAs}, with for $k=0,1,2$:
\be \label{bd:refinedtildephin}
\left|D^k_r \tilde \phi_{n}(r)\right| +\left|D^k_r b\partial_b \tilde \phi_{n}(r) \right| \lesssim \min \left(b\langle r\rangle ^2, \frac{1}{|\ln b|}  \right) \left(\frac{r}{\langle r \rangle}\right)^{2-(k\; \textup{mod}\; 2)} \langle r \rangle^{-2-k}\langle \sqrt b r \rangle^{2n+\delta}
\ee

\end{itemize}
\noindent $(iii)$ \textup{(Spectral gap estimate)} For any $g \in L^2(\Rb^+,\omega_b dr)$ with $\langle g, \phi_j\rangle_{L^2_{\omega_b}} = 0$ for $0 \leq j \leq k$, one has
\begin{equation}\label{est:eigL0b}
\langle g, \As g\rangle_{L^2_{ \omega_b}}^2 \leq \alpha_{k+1}\|g\|^2_{L^2_{\omega_b}}.
\end{equation}
\end{proposition}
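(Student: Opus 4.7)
The plan is to follow a matched asymptotic expansions strategy, in the spirit of \cite{CMRarx17}, adapted to this critical setting. Split $[0,\infty)$ into an inner region $r\leq R_0:=\zeta_0/\sqrt b$ and an outer region $r\geq R_0$ with $\zeta_0\ll 1$ fixed. In each region I construct a two-parameter family of formal solutions of $(\As_0-br\pa_r)\phi=\alpha\phi$, then impose $C^1$-matching at $r=R_0$ to extract a transcendental equation for $\alpha$, to be solved via an implicit function theorem argument. Self-adjointness of $\As$ in $L^2(\omega_b dr)$ with compact resolvent is established first: the weight $\omega_b$ is chosen precisely so that $\As$ symmetrises in divergence form as a Sturm-Liouville operator, while the Gaussian tail in $\omega_b^{-1}$ provides enough confinement to make the form domain compactly embed in $L^2(\omega_b dr)$, yielding a discrete spectrum.

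For the inner construction I would write
\[
\phi^{\inn}(r)=\sum_{j=0}^n c_{n,j} b^j T_j(r)+b^{n+1}\Psi_{n,b}(r),\qquad \alpha=2b(1-n+\tilde\alpha),
\]
with $c_{n,0}=1$, where $T_j$ are the iterated kernels of $\As_0$ from Lemma \ref{lemm:proAs}. Matching powers of $b$ yields recursive relations for the $c_{n,j}$ and fixes the leading eigenvalue $\alpha\approx 2b(1-n)$ as the solvability obstruction for $\Psi_{n,b}$. The remainder $\Psi_{n,b}$ is then constructed by inverting $\As_0$ via the Green's kernel built from $T_0$ and a second independent solution growing like $\ln r$, with quantitative control in suitable weighted norms. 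The asymptotics $T_j(r)\sim\hat d_j r^{2j-2}\ln r$ are essential: they produce the logarithmic trace of $\phi^{\inn}$ at $r=R_0$ that will interact with the outer solution in the matching.

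For the outer construction, rescale to $\zeta=\sqrt b r$: the operator formally tends to $\pa_\zeta^2+(3/\zeta)\pa_\zeta-\beta\zeta\pa_\zeta$, whose eigenfunctions for eigenvalue $2\beta(1-n)$ are Laguerre-type polynomials. The true outer solution is built by variation of constants around this limit, selecting the unique (up to scaling) solution lying in $L^2(\omega_b dr)$ at infinity. Imposing $C^1$-matching at $r=R_0$ yields two compatibility conditions. Using $\ln R_0=-\tfrac12\ln b+\ln\zeta_0$ together with the asymptotic form of $\phi^{\inn}$ and the expansion of the outer solution near $\zeta=0$ (where the singular independent solution behaves like $\zeta^{-2}$), the matching reduces to a scalar equation $F(\tilde\alpha,b)=0$ whose linearisation in $\tilde\alpha$ is non-degenerate after rescaling by $\ln b$. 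The implicit function theorem then yields $\tilde\alpha_n=1/\ln b+O(1/|\ln b|^2)$; the refined term $(\ln 2-\gamma-n)/|\ln b|^2$ for $n=0,1$ arises from values of the digamma function at integer arguments appearing in the small-$\zeta$ expansion of the regular outer solution.

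The remaining items follow from the construction: the pointwise bounds on $\phi_n$ and $\tilde\phi_n$ are inherited from the explicit structure of the inner/outer pieces together with quantitative bounds on the remainders, while the $b\pa_b$ estimate is obtained by implicit differentiation of $F(\tilde\alpha,b)=0$. Orthogonality of eigenfunctions with distinct eigenvalues follows from self-adjointness. The norms $\|\phi_n\|_{L^2_{\omega_b}}^2$ are computed by splitting into inner, transition, and outer zones, the dominant contribution coming from the transition zone where the $\ln r$-growth of $\phi_n$ combines with the Gaussian weight to give the $|\ln b|^{n+1}$ (resp.\ $|\ln b|^2$) scaling. Sign-changing of $\phi_n$ follows from a Sturm-oscillation/continuity argument from the limit operator. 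The spectral gap estimate $(iii)$ is then the standard min-max consequence of $(i)$-$(ii)$ and self-adjointness. The main obstacle will be the second-order matching: since the corrections are logarithmic rather than polynomial in $b$, the standard bifurcation argument fails, and one must carefully expand both inner and outer pieces to second order and match derivatives, which uncovers a degeneracy forcing yet higher-order information to resolve.
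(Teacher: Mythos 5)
Your proposal follows essentially the same route as the paper: matched asymptotics at $r=R_0=\zeta_0/\sqrt b$, with the inner solution built from the iterated kernels $T_j$ of $\As_0$ via a fixed-point argument, the outer solution a perturbed Kummer (confluent hypergeometric) function in $z=br^2/2$, a $C^1$-matching condition solved by the implicit function theorem (digamma values at integer arguments indeed produce the $|\ln b|^{-2}$ refinement for $n=0,1$), and self-adjointness, Sturm oscillation for zero counting, zone-splitting for the $L^2_{\omega_b}$ norms, and min--max for the spectral gap handled as you describe; you also correctly flag the logarithmic degeneracy forcing a second-order expansion of both pieces as the key obstacle. One imprecision worth noting: the remainder in your inner Ansatz should be taken of size $O(b)$ rather than $b^{n+1}$, since applying $-br\partial_r-\alpha$ to $T_0$ already produces an $O(b)$ residual (the $\Theta_0 = r\partial_r T_0 + 2T_0$ and $2\tilde\alpha T_0$ pieces) after the algebraic cancellation $c_{n,j+1}=2(n-j)c_{n,j}$ kills the polynomially growing sources, so a correction starting at order $b^{n+1}$ cannot absorb it; the paper accordingly writes the remainder as $b(-\tfrac{2}{\ln b}T_1+\As_0^{-1}\Theta_0)+2\bar\alpha\sum_j b^{j+1}(-c_{n,j}T_{j+1}+S_j)+b\Rc_n$.
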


\begin{proof} Since the computation of $(\alpha_n, \phi_n)$ through the matched asymptotic procedure is long and technical, it is left to next subsections. In particular, the existence of the $N+1$ eigenvalues $\alpha_0,...,\alpha_N$ satisfying \fref{def:specAsb}, \fref{est:nu0ntil} and \fref{est:nurefinedn=01}, and the refined bound \fref{bd:refinedtildephin}, are proved in Lemma \ref{lemm:radialmode}. It then remains to use them to prove all the other results in Proposition \ref{prop:SpecRadp}.\\

\noindent 
\textbf{Step 1} \emph{Self-adjointness and discreteness of $\sigma(\As)$}: We deal with the discreteness of $\sigma(\As )$ and its uniqueness relying on classical arguments for second order linear operators. The first observation is that we can rewrite the linear operator $\As $ as
\begin{align*}
\As  f &= \partial_r^2f - \left(\frac{1}{r} + \Phi_U' + br\right)\partial_r f + Uf= \frac{1}{\omega_b}\partial_r \left(\omega_b \partial_r f \right) + Uf,
\end{align*}
which gives the self-adjointness of $\As $ in $L^2(\Rb^+, \omega_b dr)$. For the discreteness of its spectrum, we let
$$\rho_b(r) = \frac{1}{r^2\sqrt{U}}e^{-\frac{br^2}{4}},$$
and observe that
\begin{align*}
\tilde{\As} f &= \rho_b \As  \big(\rho_b^{-1}f\big) =\frac{\rho_b}{\omega_b}\partial_r \left(\omega_b \partial_r(\rho_b^{-1}f)\right) + Uf\\
& = \partial_r^2 f + \frac{\rho_b}{\omega_b}\left[\partial_r\left(\frac{\omega_b}{\rho_b}\right) + \omega_b\partial_r(\rho_b^{-1})\right]\partial_r f + \frac{\rho_b}{\omega_b}\partial_r \left(\omega_b\partial_r(\rho_b^{-1})\right)f + Uf\\
& = \partial_r^2 f + \frac{3}{r}\partial_r f + \left[-\frac{b^2r^2}{4} + \frac{2br^2}{1+r^2} \right] f + Uf.
\end{align*}
The linear operator $\tilde{\As}$ is of Schr\"odinger type and self-adjoint in $L^2(\Rb^+, r^3 dr)$. Since $\tilde{\As}$ has the real potential tending to infinity as $r \to \infty$, its spectrum is purely discrete by standard arguments. This concludes the discreteness of spectrum of  $\As $.\\

\noindent 
\textbf{Step 2} \emph{Uniqueness of the eigenvalues:}  We first prove the sign changing property of (ii). This is a direct consequence of lemmas \ref{lemm:inn0} and \ref{lemm:out0}. In particular, we show that on the interval $(0, \frac{\zeta_0}{\sqrt{b}}]$,  $\phi_0$ does not vanish and $\phi_n$ has exactly one zero located at $r_0 \sim \frac{1}{\sqrt{n b |\log b|}}$ for $n \geq 1$ (see page  \pageref{eq:expphi0n} for a detailed proof). On the interval $( \frac{\zeta_0}{\sqrt{b}}, +\infty)$, the eigenfunction $\phi_n$ is a perturbation of a Kummer's function (see Lemma \ref{lemm:out0} for a proper definition) where it does not vanish for $n = 0,1$ and possesses $n - 1$ zeros for $n \geq 2$.\\

We now rely on Sturm-Liouville theory to show that the $N+1$ first eigenvalues of $\As $ are those given by \fref{def:specAsb}. We argue by contradiction  and assume that there exists $\alpha^* \in (\alpha_{n+1}, \alpha_{n})$ for some $n \in \mathbb{N}$ that is an eigenvalue of $\As $. Denote by $\phi^*$ the eigenfunction corresponding to $\alpha^*$, and by $\Zc\big[f, (0,\infty)\big]$ the number of zeros of $f$ on $(0,\infty)$. Sturm-Liouville theory asserts that 
\begin{equation*}
\Zc\big[\phi_{n+1}, (0,\infty)\big] > \Zc\big[\phi^*,(0,\infty)\big] > \Zc\big[\phi_{n},(0,\infty)\big] ,
\end{equation*} 
which is a contradiction as the term on the left is $n + 1 $ and that on the right is $n$ from Step 2. The case of an eigenvalue $\alpha^*>\alpha_0$ is ruled out similarly using that $\phi_0$ has no zero on $(0,\infty)$. Note that the multiplicity of the eigenvalues of $\As$ has to be of one, since from a direct check, the eigenfunction equation is an ODE that admits at least one solution growing like $r^{C}e^{br^2/2}$ at infinity for some constant $C$, and thus which is not in $L^2(\omega_b)$.\\

\noindent 
\textbf{Step 3} \emph{Proof of remaining bounds:} The spectral gap (iii) is an immediate consequence of the fact that $\As$ is self-adjoint with compact resolvent, that $\alpha_0,...,\alpha_N$ have multiplicity one, and that any other eigenvalue is smaller than $\alpha_N$.\\

To prove the pointwise estimate \fref{est:PhinPointEst}, we use the decomposition \fref{id:deftildephin}. The function $\tilde \phi_n$ satisfies the bound \fref{est:PhinPointEst} because of the even sharper bound \fref{bd:refinedtildephin}. The function $\sum_{j = 0}^n c_{n,j}b^{j} T_j\big(r\big)$ satisfies also the bound \fref{est:PhinPointEst} because of the estimates \fref{est:T0iat0} and \fref{est:T0iatinf}. Hence $\phi_n$ satisfies \fref{est:PhinPointEst}.\\

To prove the $L^2(\omega_b)$ estimate \fref{est:PhinL2norm}, we use the decomposition \fref{id:deftildephin} again. From the bound \fref{bd:refinedtildephin} we infer that, using $U^{-1}(r)\lesssim \langle r\rangle^{3}$ and dividing the integral the two zones $r\leq \sqrt{b}^{-1}$ and $r\leq \sqrt{b}^{-1}$:
\bee
\int_0^\infty |\tilde \phi_n|^2\omega_b\lesssim \int_0^{\sqrt b^{-1}}\frac{\langle r\rangle^{-4}}{|\ln b|^2}\langle r \rangle^3dr+\int_{\sqrt b^{-1}}^{\infty} \frac{r^{4n-4+2\delta}b^{2n+\delta}}{|\ln b|^2}r^3 e^{-\frac{br^2}{2}}dr\lesssim \frac{1}{|\ln b|}.
\eee
One then computes from \fref{def:U}, \fref{def:psi01}, \fref{est:T0iatinf} and \fref{id:di}  that for $n=0,1$ as $b\rightarrow 0$:
$$
\int_0^\infty T_0^2(r)\omega_b (r)\sim \frac 18 \int_1^\infty r^{-1}e^{-\frac{br^2}{2}}dr\sim \frac{|\ln b|}{16},
$$
$$
\int_0^\infty b^2T_1^2(r)\omega_b (r)\sim \frac{1}{32} \int_1^\infty b^2 r^3 |\ln r|^2e^{-\frac{br^2}{2}}dr\sim \frac{|\ln b|^2}{64},
$$
by using that $\int_0^{\infty}\zeta^3 e^{-\zeta^2/2}d\zeta=2$. The three above identities, with the explicit values \fref{id:recurrencecnj} for $c_{i,j}$, prove \fref{est:PhinL2norm} for $n=0,1$. The general bound for $n\geq 2$ follows similarly.

\end{proof}

\subsection{Analysis in the inner zone $r \leq R_0$} In this part, we solve equation \fref{eq:phi} in the interval $[0, R_0]$ where we consider $-br\partial_r - \alpha$ as a small perturbation of $\As_0$. Let us recall some basic properties of $\As_0$ in the following. We introduce the norms

$$
\| f \|_{X^a_\imath}:=\sup_{r\in [0,R_0]} \frac{\rj^{2-a}}{r^2 \big(1 + \ln \rj\big)^\imath}|f(r)| \quad \mbox{ for } \imath=0,1 \mbox{ and } \quad \| f \|_{X^a_{-1}}:=\sup_{r\in [0,R_0]} \frac{\rj^{2-a}}{r^2 \big(1+\frac{2\ln \rj}{\ln b}\big)}|f(r)|
$$

for any $a\in \mathbb R$, and the function spaces for $\imath=-1,0,1$,
\begin{equation}\label{def:I0a}
\Ic_{\imath}^{a} := \left\{ f:\;\; \|f\|_{\Ic_{\imath}^{a}} \triangleq \| f \|_{X^a_\imath}+\| r\pa_r f \|_{X^a_\imath}+\|r^2\pa_{r}^2 f \|_{X^a_\imath} < \infty\right\}.
\end{equation}

\begin{lemma}[Properties of $\As_0$]  \label{lemm:proAs}$\quad$ 
\begin{itemize}
\item[(i)] \textup{(Inversion)} For any $f \in \Cc(\Rb^+, \Rb)$, a solution to $\As_0 u =f$ is given by:

\begin{align}\label{def:invAs}
\As_0^{-1} f(r) &:= \frac{1}{2} \psi_0(r) \int_r^{1} \frac{\zeta^4 + 4\zeta^2 \ln \zeta - 1}{\zeta} f(\zeta) d\zeta + \frac{1}{2}\tilde\psi_0(r)\int_0^r \zeta f(\zeta) d\zeta,
\end{align}

where $\psi_0$ and $\tilde{\psi}_0$ are the two linearly independent solutions to $\As_0 \psi = 0$ given by
\begin{equation}\label{def:psi01}
\psi_0(r) = \frac{r^2}{\rj^4} \quad \textup{and} \quad \tilde{\psi}_0(r) = \frac{r^4 + 4r^2\ln r - 1}{\rj^4}.
\end{equation}
\item[(ii)] \textup{(Continuity)} Let $\imath \in \{-1,0,1\}$ and $a>-2$, then there holds the estimate:
\begin{equation}\label{est:conLc0}
\|\As_0^{-1}f\|_{\Ic_\imath^{a+2}} \lesssim \|f\|_{X_\imath^{a}}.
\end{equation}
\item[(iii)] \textup{(Iterative kernel of $\As_0$)} There exists a family of smooth radial functions $\big\{T_{i}\big\}_{i \in \mathbb{N}}$ defined as 
\begin{equation}\label{def:T0i}
\As_0 T_{i+1} := -T_{i}, \quad T_{0} := \psi_0,
\end{equation}
which admit the asymptotic estimates
\begin{align}
 &\Big|\big(r \partial_r\big)^p T_{i}\Big| = \Oc(r^{2}) \quad \textup{as} \quad r \to 0, \quad \forall p \in \mathbb{N}, \quad \label{est:T0iat0} \\
&T_{i} =  r^{2(i - 1)} \Big(\hat d_{i} \ln r + d_i\Big) + \Oc\big(r^{2(i-2)}\ln^{i+1}r\big) \quad \textup{as} \quad r \to \infty,\label{est:T0iatinf}\\
&r\partial_rT_{i} = r^{2(i - 1)} \left[ 2(i - 1) \big(\hat d_{i} \ln r  + d_i\big) +  \hat d_i\right]  + \Oc\big(r^{2(i-2)}\ln^{i+1}r\big), \label{est:T0iatinfdev}\\
 & \Big|\big(r\partial_r\big)^p T_{i}\Big| = \Oc(r^{2(i-1)}\ln r) \quad \textup{as} \quad r \to \infty, \quad \forall p \in \mathbb{N},\label{est:T0iatinfHigdev}
\end{align}
where $d_i \in \Rb$ and 
\begin{align} \label{id:di} 
\hat d_1 = -\frac{1}{2}, \;\; d_1 = \frac{1}{4},  \quad \hat d_{i + 1} = -\frac{\hat d_i}{4i(i+1)}, \quad d_{i+1} = \frac{1}{8}\left(\frac{\hat d_i - 2id_i}{i^2} - \frac{\hat d_i - (2i + 2)d_i}{(i+1)^2}\right).
\end{align}
\end{itemize}
\end{lemma}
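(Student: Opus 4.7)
The plan is to treat the three parts sequentially, using variation of parameters as the central tool and an induction with careful bookkeeping of logarithmic corrections for the iterated inversion. For part (i), I would first verify by direct differentiation that $\psi_0$ and $\tilde\psi_0$ defined in \fref{def:psi01} span the kernel of $\As_0$. Writing $\As_0 f = f'' + ((Q-1)/r)f' + (Q'/r)f$ with $Q=4r^2/(1+r^2)$, Abel's identity forces the Wronskian $W=\psi_0\tilde\psi_0'-\psi_0'\tilde\psi_0$ to satisfy $W'=-((Q-1)/r)W$, so that $W(r)=C r (1+r^2)^{-2}$ for some nonzero constant $C$ pinned down by evaluation at a convenient point. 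A standard variation of parameters based on $(\psi_0,\tilde\psi_0)$ then produces an inverse of $\As_0$, and the specific choice of endpoints in \fref{def:invAs}---from $0$ to $r$ against $\tilde\psi_0$ and from $r$ to $1$ against $\psi_0$---is made to select the unique solution that is regular at the origin and free of a scaling mode contribution.

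For part (ii), I would plug the formula \fref{def:invAs} into the norm. Splitting both integrals at $\zeta=1$ and combining the pointwise bound $|f(\zeta)|\lesssim \|f\|_{X^a_\imath}\zeta^2\langle\zeta\rangle^{a-2}(1+\ln\langle\zeta\rangle)^\imath$ with the asymptotics of $\psi_0$ ($\sim r^2$ near $0$, $\sim r^{-2}$ at infinity) and $\tilde\psi_0$ (bounded near $0$, tending to $1$ at infinity), the assumption $a>-2$ is precisely what ensures integrability of the $\int_0^r$ piece near the origin, while the other regimes directly give the claimed power gain of $2$ with the same logarithmic weight $\imath$. The bound on $r\pa_r \As_0^{-1}f$ comes from differentiating \fref{def:invAs}, where the boundary terms cancel thanks to the variation-of-parameters structure; the bound on $r^2\pa_r^2 \As_0^{-1}f$ is then recovered algebraically from the identity $\As_0(\As_0^{-1}f)=f$, which expresses $\pa_r^2(\As_0^{-1}f)$ as a combination of lower-order terms and $f$, preserving the weight.

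Part (iii) is the main technical step, to be proved by induction on $i$ after defining $T_{i+1}:=-\As_0^{-1}T_i$ starting from $T_0=\psi_0$. The regularity at $0$ and the bound \fref{est:T0iat0} follow directly from part (ii) applied recursively. For the asymptotic expansion at infinity I would write $T_i(r)=r^{2(i-1)}(\hat d_i \ln r + d_i)+R_i(r)$ with $R_i(r)=O(r^{2(i-2)}\ln^{i+1}r)$ and compute $-\As_0^{-1}$ on each piece separately. At infinity, $\As_0$ reduces to $\pa_r^2+(3/r)\pa_r$, which sends $r^{2i}\mapsto 4i(i+1)r^{2(i-1)}$: inverting a pure power therefore increases the exponent by $2$ without producing a logarithm, whereas inverting a $\ln r$ term triggers a resonance with the homogeneous solution $\tilde\psi_0\to 1$, as $\int_1^r \zeta\cdot\zeta^{2(i-1)}\hat d_i \ln\zeta\,d\zeta$ generates both $r^{2i}\ln r$ and $r^{2i}$ contributions. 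Combining these with the $\int_r^1$ term multiplied by $\psi_0\sim r^{-2}$ and identifying coefficients yields precisely the recursion \fref{id:di}, while the $\ln^{i+1}r$ remainder at step $i+1$ arises because $\psi_0\sim r^{-2}$ times the antiderivative of $\zeta R_i(\zeta)$ picks up exactly one extra logarithm.

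The main obstacle I expect is the bookkeeping in part (iii): several interacting orders of polynomial and logarithmic growth must be tracked simultaneously through the inversion formula, and the resonance between $\tilde\psi_0$ and the power $r^{2(i-1)}$ has to be treated with the exact constants so that \fref{id:di} drops out correctly. The derivative asymptotics \fref{est:T0iatinfdev}--\fref{est:T0iatinfHigdev} then follow by differentiating the explicit leading expansion term by term and bounding the derivatives of the remainder via part (ii).
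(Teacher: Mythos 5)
Your proposal is correct and follows essentially the same route as the paper: Wronskian/variation of parameters for part (i), direct estimates on the inversion formula (split at $\zeta=1$, boundary terms cancelling when differentiating, and the ODE to recover the second derivative) for part (ii), and induction with explicit evaluation of the antiderivatives to extract the recursion \fref{id:di} for part (iii). One small point worth flagging: in part (iii) the extra power of $\ln r$ per step comes from the $4\zeta^2\ln\zeta$ factor in the $\psi_0$-side kernel and from the $\tilde\psi_0$ integral of $\zeta R_i$ (not from $\psi_0$ times that antiderivative as you wrote), and speaking of a "resonance with $\tilde\psi_0\to 1$" is slightly misleading since there is no genuine resonance for $i\geq 1$ — the logs are inherited and propagated, not newly created — but this does not affect the validity of the plan.
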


\begin{proof} $(i)$ By the scaling invariance of the problem \fref{eq:KS}, we have $\frac{d}{d \lambda} \left[ \Delta U_\lambda - \nabla \cdot( \nabla U_\lambda) \right]_{\lambda = 1} = 0$, or $\Ls^y \Lambda U = 0$. Hence $\psi_0 = \frac{1}{8}\int_0^r \Lambda U (x) x dx$ is the first fundamental solution to $\As_0 \psi = 0$. The explicit formula of $\tilde{\psi}_0$ follows from the integration of the Wronskian relation, and the formula \fref{def:invAs} is a standard way to solve linear second order ODEs.

$(ii)$ We denote $u=\As_0^{-1}f$. We directly compute from \fref{def:invAs} for $r \leq 1$ that for any $a,\imath$:
 \begin{align*}
|u(r)| &\lesssim \left|\psi_0(r)\int_r^1 \frac{\xi^4 + 4\xi^2 \ln \xi - 1}{\xi}f(\xi) d\xi + \tilde{\psi}_0(r)\int_0^r \xi f(\xi)d\xi\right|\\
&\quad \lesssim \left(\sup_{0 \leq \xi \leq 2} \xi^{-2} |f(\xi)|\right) \left(r^2 \int_r^1 \xi d\xi  + \int_0^r \xi^3 d\xi\right) \lesssim r^4 \| f \|_{X^a_\imath}.
\end{align*}
For $1 \leq r \leq R_0$, we use again formula \fref{def:invAs} to compute for $\imath=0,1$ and $a>-2$:
\begin{align*}
|u(r)| &\lesssim |\psi_0(r)| \int_1^r \xi^3 |f(\xi)| d\xi   + |\tilde{\psi}_0(r)| \int_0^r \xi |f(\xi)|d\xi \\
& \lesssim r^{-2} \sup_{1 \leq \xi \leq R_0} \frac{\xi^{- a}|f(\xi)|}{(1+ \ln \langle \xi\rangle)^\imath } \int_1^r \xi^3\xi^a (1+\ln \langle \xi\rangle)^\imath d\xi   +\sup_{0 \leq \xi \leq R_0} \frac{\langle \xi\rangle ^{- a}|f(\xi)|}{(1+ \ln \langle \xi\rangle)^\imath } \int_0^r \xi \langle \xi\rangle ^{ a} (1+\ln \langle \xi\rangle )^\imath d\xi\\
&  \lesssim  \| f \|_{X_\imath^a}r^{a + 2}(1+\ln \rj)^\imath 
\end{align*}
For $\imath=-1$ we first notice that the function $1+\frac{2\ln \rj}{\ln b}$ is decreasing and satisfies for any $r\in[0,R_0]$:
$$
\frac{1}{|\ln b|}\leq \frac{|\ln \zeta_0|}{|\ln b|} \leq 1+\frac{2\ln \rj}{\ln b}\leq 1,
$$
so that for $r\in [1,R_0]$ and $a>-1$, with constants independent on $b$:
\begin{align}
&\int_0^r \langle \xi\rangle^a (1+\frac{2\ln \langle \xi \rangle}{\ln b})d\xi \lesssim 1 +\left| \int_2^r \xi^a (1+\frac{2\ln  \xi}{\ln b})d\xi\right| \nonumber \\
& \qquad  \lesssim 1 +\left| r^{a+1}\left(1+\frac{2\ln r}{\ln b}\right)-\frac{r^{a+1}}{\ln b} \right| \lesssim r^{a+1}\left(1+\frac{2\ln \rj}{\ln b}\right). \label{estimationpoidslogloss} 
\end{align}
Hence for $\imath=-1$ and $a>-2$, computing as above:
\begin{align*}
|u(r)|  & \lesssim r^{-2}  \sup_{1 \leq \xi \leq R_0} \frac{\xi^{- a}|f(\xi)|}{1+\frac{2\ln \langle \xi \rangle}{\ln b} } \int_1^r \xi^3\xi^a (1+\frac{2\ln \langle \xi \rangle}{\ln b}) d\xi   +\sup_{0 \leq \xi \leq R_0} \frac{\langle \xi\rangle ^{- a}|f(\xi)|}{1+\frac{2\ln \langle \xi \rangle}{\ln b} } \int_0^r \xi \langle \xi\rangle ^{ a} (1+\frac{2\ln \langle \xi \rangle}{\ln b}) d\xi \\
&  \lesssim  \| f \|_{X_\imath^a} r^{a + 2}(1+\frac{2\ln \rj}{\ln b}).
\end{align*}
The estimates above imply for any $a>-2$ and $\imath=-1,0,1$, with a constant independent on $b$ and $\zeta_0$:
$$
\| \As_0^{-1} f\|_{X_\imath^{a+2}}\lesssim \| f\|_{X_\imath^a}.
$$
To estimate the derivatives, we notice from \fref{def:invAs} that

\be \label{id:differentiationAinv}
\pa_r u=\frac 12 \pa_r \psi_0 \int_r^1 \frac{\xi^4+4\xi^2\ln \xi -1}{\xi}f(\xi)d\xi+\frac 12 \pa_r \tilde \psi_0 \int_0^r \xi f(\xi)d\xi.
\ee

Hence, with the very same computations that we do not repeat we obtain for $\imath=-1,0,1$ and $a>-2$:
$$
\| r \pa_r u \|_{X_\imath^{a+2}}\lesssim \| f\|_{X_\imath^a}.
$$
Next, using that $\As_0 u =f$ and the definition of $\As_0$ yields
$$
\pa_r^2 u=f +\left(\frac 1r-\frac{4r}{\rj^2}\right)\pa_r u -\frac{8}{\rj^4}u.
$$
so that for $\imath=-1,0,1$ and $a>-2$, using the previous estimates for $u$ and $r\pa_r u$:
\bee
\| r^2 \pa_{rr} u \|_{X_\imath^{a+2}} & \lesssim & \| r^2 f\|_{X_\imath^{a+2}}+\|\left(r-\frac{4r^3}{\rj^2}\right)\pa_r u \|_{X_\imath^{a+2}}+ \| \frac{8r^2}{\rj^4}u\|_{X_\imath^{a+2}}\\
&\lesssim & \| f \|_{X_\imath^a}+\| f \|_{X_\imath^a}+\| f \|_{X_\imath^a}\lesssim \| f \|_{X_\imath^a}.
\eee
This concludes the proof of \fref{est:conLc0}.\\

$(iii)$ For $r \ll 1$, we compute from \fref{def:invAs} 
$$|T_{1}(r)| + |r\partial_r T_{1}(r)| = \Oc\left(r^2\int_r^1 \xi^{-1}\xi^2 d\xi + \int_0^r \xi \xi^2 d\xi\right) = \Oc(r^2) \quad \textup{as} \quad r \to 0.$$
We use $\As_0 T_{1} = -\psi_0$ and the definition \fref{def:As} of $\As_0$ to estimate for $k \in \mathbb{N}$,
$$|(r\partial_r)^{k+2}T_1(r)| = \Oc\left(\sum_{j = 0}^k|r^{j+1}\partial^{j+1}_r T_1| + r^{k+2}|\partial_r^k\psi_0|\right) = \Oc(r^2) \quad \textup{as} \quad r \to 0.$$
Hence, the estimate \fref{est:T0iat0} holds for $i = 1$. By induction, we assume that estimate \fref{est:T0iat0} holds for $i \geq 1$. We compute from \fref{def:invAs} and the relation $T_{i+1} = -\As_0^{-1}T_{i}$, 
\begin{align*}
|T_{i+1}| + |r\partial_r T_{i+1}| = \Oc\left(r^2 \int_r^1 \xi^{-1} \xi^{2}d\xi + \int_0^r \xi \xi^{2}d\xi\right) = \Oc(r^{2}),
\end{align*}
as $r \to 0$. The estimate for higher derivative follows from the relation $\As T_{i+1} = -T_{i}$ and the definition \fref{def:As} of $\As_0$. 

For $1 \ll r \leq R_0$, we prove \fref{est:T0iatinf} by induction. For $i = 1$, we compute from \fref{def:invAs} and the relation $T_{1} = -\As_0^{-1}\psi_0$ 
\begin{align*}
T_{1}(r)& = \frac{1}{2}\psi_0 \int_r^1 \frac{\xi^4 + 4\xi^2\ln \xi - 1}{\xi} \psi_0(\xi) d\xi + \frac{1}{2}\tilde{\psi}_0 \int_0^r \xi \psi_0(\xi) d\xi \\
& =  \left(\frac{1}{2r^2} + \Oc(r^{-4})\right) \left(\frac{1}{2}r^2 + \Oc(\ln^2 r) \right) - \left(\frac{1}{2} + \Oc(r^{-2}\ln r)\right) \left(\ln r + \Oc(r^{-2}) \right)\\
& = -\frac{1}{2}\ln r + \frac{1}{4} + \Oc\left(\frac{\ln ^2 r}{r^2}\right),
\end{align*}
which is \fref{est:T0iatinf} for $i = 1$. Assuming now that expansion \fref{est:T0iatinf} holds for some $i \geq 1$, we use formula \fref{def:invAs}, the relation $T_{i+1} = -\As_0^{-1}T_{i}$ and the elementary identity
$$\int_0^r s^k \ln s \; ds = \frac{r^{k+1}\big[(k+1) \ln r - 1\big]}{(k+1)^2} \quad \textup{for all} \;\; k \in \mathbb{N},$$
to compute
\begin{align*}
T_{i+1} &= \frac{1}{2}\psi_0 \int_r^1 \frac{\xi^4 + 4\xi^2\ln \xi - 1}{\xi} T_{i}(\xi) d\xi + \frac{1}{2}\tilde{\psi}_0 \int_0^r \xi T_{i}(\xi) d\xi \\
& = \left(\frac{1}{2r^2} + \Oc(r^{-4})\right) \left[ \frac{\hat d_i}{(2i + 2)}\ln r - \frac{\hat d_i - (2i + 2)d_i}{(2i + 2)^2} +\Oc\left(r^{-2}\ln^{i+2}r\right) \right] \\
& \quad -\left(\frac{1}{2} + \Oc(r^{-2}\ln r)\right) r^{2i}\left[ \frac{\hat d_i}{2i}\ln r - \frac{\hat d_i - 2id_i}{4i^2} +\Oc\left(r^{-2}\ln^{i+1}r\right) \right]\\
& = r^{2i}\left[ \frac{-\hat d_i}{4i(i+1)}\ln r + \frac{1}{8}\left(\frac{\hat d_i - 2id_i}{i^2} - \frac{\hat d_i - (2i + 2)d_i}{(i+1)^2}\right)  \right]  +  \Oc\left(r^{2i-2}\ln^{i+2}r\right),
\end{align*}
which gives 
$$\hat d_{i + 1} = -\frac{\hat d_i}{4i(i+1)}, \quad d_{i+1} = \frac{1}{8}\left(\frac{\hat d_i - 2id_i}{i^2} - \frac{\hat d_i - (2i + 2)d_i}{(i+1)^2}\right).$$
This concludes the proof of \fref{est:T0iatinf}.
 
The proof of \fref{est:T0iatinfdev} follows similarly by induction. Indeed, assuming that \fref{est:T0iatinfdev} holds for $i \in \mathbb{N}$, we compute from \fref{def:invAs}, the relation $T_{i+1} - =\As_0^{-1}T_{i}$ and the expansion \fref{est:T0iatinfdev} for $1 \ll r \leq R_0$:
\begin{align*}
r \partial_r T_{i+1} &= \frac{r}{2}\partial_r\psi_0 \int_r^1 \frac{\xi^4 + 4\xi^2\ln \xi - 1}{\xi} T_{i}(\xi) d\xi + \frac{r}{2}\partial_r\tilde{\psi}_0 \int_0^r \xi T_{i}(\xi) d\xi \\
& = \left(-r^{2i} + \Oc(r^{2i-2})\right)\left[ \frac{\hat d_i}{(2i + 2)}\ln r - \frac{\hat d_i - (2i + 2)d_i}{(2i + 2)^2} +\Oc\left(r^{-2}\ln^{i+2}r\right) \right] +\Oc\left(\frac{\ln^2 r}{r^{2-2i}}\right)\\
& = r^{2i}\left[ \frac{-\hat d_i}{2(i+1)}\ln r +  \frac{\hat d_i - 2(i+1)d_i}{4(i+1)^2}\right]  +  \Oc\left(r^{2i-2}\ln^{i+2}r\right).
\end{align*}
Using the recursive definition of $\hat d_i$ and $d_i$, i.e,
\be \label{id:recurrencedi}
\hat d_{i + 1} =  - \frac{\hat d_i}{4i(i+1)}, \quad d_i = -4i(i+1)d_{i+1} - 2(2i +1)\hat d_{i+1},
\ee
we have the simplification $ \frac{-\hat d_i}{2(i+1)} = 2i \hat d_{i+1}$ and 
\begin{align*}
\frac{\hat d_i - 2(i+1)d_i}{4(i+1)^2} & = \frac{\hat d_i}{4i^2} - \frac{d_i}{2i} - 2d_{i+1}= -\frac{i+1}{i}\hat d_{i+1} + \frac{2i + 1}{i}\hat d_{i + 1} + 2i d_{i+1} = \hat d_{i+1} + 2i d_{i+1}.
\end{align*}
This  concludes the proof of \fref{est:T0iatinfdev}. The estimate \fref{est:T0iatinfHigdev} follows by induction from the definition of $\As_0$, the relation $\As_0 T_{i+1} = -T_{i}$ and the Leibniz rule. This completes the proof of Lemma \ref{lemm:proAs}. 
\end{proof}

\medskip

In the following we show that the profiles $T_{j}$ given in Lemma \ref{lemm:proAs} are actually the building blocks of the eigenfunction of the linear operator $\As  = \As_0 - br\partial_r$ on $[0,R_0]$. In particular, we have the following.

\begin{lemma}[Inner eigenfunctions for the radial mode]  \label{lemm:inn0} Let $n \in \mathbb{N}$, $0 < \zeta_0 \ll 1$ and $0 < b \ll 1$ be small enough. Then for any $|\bar \alpha|\lesssim |\ln b|^{-2}$ there exists a smooth function $\phi_n^\inn \in \Cc^\infty([0, R_0], \Rb)$ satisfying
\begin{equation}\label{eq:Mninn}
\As  \phi_n^\inn = 2b\big(1 - n +\tilde \alpha \big)\phi_n^\inn \quad \textup{with} \quad \tilde \alpha=  \frac{1}{\ln b}+\bar{\alpha},
\end{equation}
where $\phi_n^\inn$ is of the form
\begin{equation}\label{eq:forMinn}
\phi_n^\inn(r) = \sum_{j =0}^nc_{n,j}b^jT_{j} +b\left(-\frac{2}{\ln b}T_1+\As_0^{-1}\Theta_0\right)+ 2\bar{\alpha}\sum_{j = 0}^n b^{j + 1}\Big(-c_{n,j}T_{j + 1} + S_{j}\Big)  + b\Rc_{n} ,
\end{equation}
and the constants $\big(c_{n,j} \big)_{0 \leq j \leq n}$ are given by 
\begin{equation} \label{id:recurrencecnj}
c_{n,j} = 2^j \frac{n!}{(n - j)!}, \quad c_{n, j+1} = 2(n - j)c_{n,j}, \quad c_{n,0} = 1.
\end{equation}
The corrective functions $R_{n}$, $S_{j}$ satisfy the following estimates for any $n\geq 0$:
\begin{align}
&\| S_{j}\|_{\Ic_1^{2j}+ \| b \partial_{b} S_{j} \|_{\Ic_1^{2j}}} + \|\partial_{\bar{\alpha}} S_{j} \|_{\Ic_1^{2j}} \lesssim \zeta_0^2, \label{est:S0j}\\
 & \|\Rc_{n}\|_{\Ic_{-1}^{0}}+\|b\partial_{b}\Rc_{n}\|_{\Ic_{-1}^{0}}+\|\partial_{\bar{\alpha}} \Rc_{n} \|_{\Ic_{-1}^0} \lesssim 1, \label{est:R0n}
\end{align}
with the following refinements for $n=0$:
\be \label{id:refinementSjn=0}
S_0=\frac 12 \sum_{i=1}^{\infty} \frac{1}{(2)_i2^i}b^i r^{2i}\log (r+1)+\tilde{S}_0, \quad \| \tilde S_0\|_{\Ic_0^{2}}+\| b\partial_b \tilde S_0\|_{\Ic_0^{2}}+\| \pa_{\bar \alpha}\tilde S_0\|_{\Ic_0^{2}}\lesssim b,
\ee
$$
\Rc_0=-\frac{1}{2} \sum_{i=1}^{\infty}\frac{1}{(2)_i 2^i}b^ir^{2i} \left\{ \frac{1}{\log b}\left[2\ln (r+1)-\Psi (i+2)-\gamma \right]+1 \right\}+\tilde \Rc_0,
$$
\be \label{id:refinementRn=0}
 \| \tilde \Rc_0\|_{\Ic_{-1}^0}+ \| b\partial_b \tilde \Rc_0\|_{\Ic_{-1}^0}\lesssim |\log b|^{-1}, \qquad  \| \partial_{\bar \alpha} \tilde \Rc_0\|_{\Ic_{-1}^0}\lesssim 1,
\ee
and for $n=1$:
$$
\Rc_1 =-\frac{1}{2} \sum_{i=1}^{\infty} \frac{(1)_{i-1}}{(2)_i i! 2^i}b^ir^{2i}\left\{ \frac{1}{\log b}\left[2\ln (r+1) -\frac 1i -\Psi (i+2)-\gamma \right]+1-\frac{1}{\log b} \right\}+\tilde \Rc_1,
$$
\begin{align} \label{id:refinementRn=1}
\| \tilde \Rc_1\|_{\Ic_{-1}^0}+\|b\partial_b \tilde \Rc_1\|_{\Ic_{-1}^0}\lesssim |\log b|^{-1}, \quad  \| \partial_{\bar \alpha} \tilde \Rc_1\|_{\Ic_{-1}^0}\lesssim 1.
\end{align}
$$
S_{1}=-\frac{1}{2} \sum_{i=2}^{\infty} \frac{(1)_{i-1}}{(2)_i i! 2^i}b^{i-1}r^{2i}\ln (r+1)+\tilde S_1,
$$
\be \label{id:refinementSjn=1}
\| S_0\|_{\Ic_0^{2}}+\| b\partial_b S_0\|_{\Ic_0^{2}}+\| \partial_{\bar \alpha}S_0\|_{\Ic_1^{2}}\lesssim b, \ \ \| \tilde S_1\|_{\Ic_0^{2}}+\| b\partial_b\tilde S_1\|_{\Ic_{0}^{2}}+\| \partial_{\bar \alpha}\tilde S_1\|_{\Ic_{1}^{2}}\lesssim  1.
\ee

Finally, on the interval $(0,R_0]$, $\phi^\inn_0$ does not vanish and $\phi^\inn_n$ has exactly one zero for $n\geq 1$.
\end{lemma}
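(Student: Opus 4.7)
The plan is to construct $\phi_n^\inn$ as a perturbative expansion, viewing $-br\partial_r + 2b(1-n+\tilde \alpha)$ as a small perturbation of $\As_0$ and exploiting the iterated kernel $\{T_j\}_{j \in \mathbb{N}}$ provided by Lemma \ref{lemm:proAs}. I would begin with the leading ansatz $\phi_n^{(0)}(r) := \sum_{j=0}^n c_{n,j} b^j T_j(r)$. Applying $\As - 2b(1-n)$ and combining the identity $\As_0 T_j = -T_{j-1}$ with the sharp asymptotic $r\partial_r T_j = 2(j-1) T_j + \hat d_j r^{2(j-1)} + \Oc(r^{2j-4} \ln^{j+1} r)$ from \fref{est:T0iatinfdev}, the residual along the $T_j$-direction for $0 \leq j \leq n-1$ vanishes exactly when the recurrence $c_{n,j+1} = 2(n-j) c_{n,j}$ is enforced, fixing the coefficients \fref{id:recurrencecnj}. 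The leftover residual has size $\Oc(b)$ in suitable weighted norms and sits on subleading polynomial-logarithmic scales.

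Next, the leftover residual is inverted using $\As_0^{-1}$ to produce the corrections $S_j$, with bounds following from the continuity estimate \fref{est:conLc0} applied with exponent $a = 2j-2$. The crucial technical point is a resonance at the bottom of the hierarchy: the contribution $-b c_{n,0} r\partial_r T_0$ decays like $r^{-2}$ at infinity, which is the decay rate of $\psi_0 = T_0$ itself, so formal inversion through $\As_0^{-1}$ produces logarithmic growth. This obstruction is absorbed by the eigenvalue shift: the source $2b\tilde \alpha \phi_n^{(0)}$ contains a $2b\tilde \alpha T_0$ component, and balancing the logarithmic accumulation over $[0,R_0]$ (where $\ln R_0^2 \sim |\log b|$) forces $\tilde \alpha = 1/\log b + \bar \alpha$ with $\bar \alpha = \Oc(|\log b|^{-2})$. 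This mechanism produces the specific $b(-2T_1/\ln b + \As_0^{-1}\Theta_0)$ term in \fref{eq:forMinn}, with $\Theta_0$ read off explicitly from the residual.

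With $c_{n,j}$, $\tilde \alpha$, and the $S_j$ fixed, the remainder $\Rc_n$ is built as a fixed point in $\Ic_{-1}^0$ of the linear equation obtained by subtracting all previous contributions from $\phi_n^\inn$. The weight $1 + 2\ln\langle r\rangle/\ln b$ in $\Ic_{-1}^0$ is calibrated precisely to absorb the logarithmic loss of $\As_0^{-1}$ on $[0,R_0]$, as encapsulated in the bound \fref{estimationpoidslogloss} from the proof of \fref{est:conLc0}; smallness of $b$ closes the contraction. Derivatives with respect to $b$ and $\bar \alpha$ follow by differentiating the fixed-point identity and reapplying \fref{est:conLc0}. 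The explicit series refinements \fref{id:refinementSjn=0}--\fref{id:refinementSjn=1} for $n = 0, 1$ are obtained by expanding each $T_j$ in its power series near the origin, applying $\As_0^{-1}$ termwise via \fref{def:invAs}, and recognising the Pochhammer-digamma structure of the resulting hypergeometric-type sums. The sign-changing statement then follows from the leading asymptotics: $\phi_0^\inn \sim T_0 > 0$ throughout $(0, R_0]$, while for $n \geq 1$ the ansatz transitions from $T_0 > 0$ near the origin to $c_{n,n} b^n T_n \sim c_{n,n} b^n \hat d_n r^{2n-2} \ln r$ at large $r$ (with sign controlled by \fref{id:di}), producing exactly one zero at $r_0 \sim (nb|\log b|)^{-1/2} \in (0, R_0]$ which the small corrections cannot duplicate.

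The main obstacle is precisely the resonance between $-br\partial_r T_0$ and the kernel element $T_0$: this is what forces the anomalous $1/|\log b|$ scale of $\tilde \alpha$ and rules out a purely polynomial-in-$b$ perturbation theory. The logarithmic degeneracy propagates throughout the construction, so each application of $\As_0^{-1}$ must be tracked in a weighted space sensitive to factors of $1 + 2\ln\langle r\rangle/\ln b$, and the ansatz must be tuned layer by layer (leading $T_j$-sum, $\tilde \alpha$-correction, $\As_0^{-1}\Theta_0$, $S_j$, $\Rc_n$) rather than produced by a single contraction argument.
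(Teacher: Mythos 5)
Your high-level strategy reproduces the paper's: the ansatz $\sum_j c_{n,j}b^jT_j$ with the recurrence $c_{n,j+1}=2(n-j)c_{n,j}$ fixed by cancelling residuals along the $T_j$-directions, the correction term built around the combination $-\frac{2}{\ln b}T_1 + \As_0^{-1}\Theta_0$, and the corrective profiles $S_j$ and $\Rc_n$ produced by Banach fixed point in the weighted spaces $\Ic_1^{2j}$ and $\Ic_{-1}^0$. The choice of the $\Ic_{-1}$ weight to absorb the logarithmic loss on $[0,R_0]$ is correctly identified. Two points are framed inaccurately, though. First, $\tilde\alpha = 1/\ln b + \bar\alpha$ is \emph{prescribed} in the lemma, not derived in the inner analysis by "balancing the logarithmic accumulation over $[0,R_0]$"; the selection of $\bar\alpha$ happens only at the matching step (Lemma \ref{lemm:radialmode}). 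What the $1/\ln b$ term does within the inner construction is produce the improved decay of the combinations $\Theta_j + \frac{2}{\ln b}T_j$ (see \fref{bd:improveddecayThetajTj}--\fref{bd:improveddecayTheta0T0}), which is what makes the source terms land in $\Ic_{-1}^0$. Second, the explicit refinements for $n=0,1$ are not obtained by applying $\As_0^{-1}$ termwise to power series; the paper instead peels off explicit solutions $\hat S_j$, $\Rc_{j,1}$, $\Rc_{j,2}$ of the \emph{model} equation $(\partial_{rr} + \frac{3}{r}\partial_r - br\partial_r)\hat f = \text{source}$ (the large-$r$ limit of $\As$) and then runs a fixed point on the small remainder. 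The results agree to leading order, but the routes are different.

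There is one genuine gap: your zero-counting argument is wrong for $n \geq 2$. You claim the unique zero of $\phi_n^\inn$ on $(0,R_0]$ arises from the transition between $T_0 > 0$ and $c_{n,n}b^nT_n$, and place it at $r_0 \sim (nb|\ln b|)^{-1/2}$. But balancing $T_0 \sim r^{-2}$ against $b^nT_n \sim b^n r^{2n-2}\ln r$ gives $r \sim b^{-1/2}(\ln)^{-1/2n}$, which is \emph{not} $(nb|\ln b|)^{-1/2}$ when $n\geq 2$ — so your stated mechanism and your stated zero location are mutually inconsistent. In fact at $r_0 \sim (nb|\ln b|)^{-1/2}$, one has $b^n T_n(r_0) = \Oc(b/|\ln b|^{n-2})$, which is negligible against $T_0(r_0) \sim nb|\ln b|$ for $n\geq 2$. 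The correct balance, and the one the paper uses (equations \fref{estimationphiinnperturbation}--\fref{as:phiinnr0}), is between $T_0\sim r^{-2}$ and $c_{n,1}bT_1 = 2nbT_1 \sim -nb\ln r$ alone: all higher $b^jT_j$ with $j\geq 2$ are subleading at $r_0$. This gives $r_0 \sim (nb|\ln b|)^{-1/2}$ consistently for all $n\geq 1$, and the corrective terms are then shown to be too small to create a second zero on $(0,R_0]$.
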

\begin{proof} The proof mainly relies on classical arguments based on the Banach fixed point theorem to construct the corrective profiles $\Rc_{n}$ and $S_{j}$ for $0 \leq j \leq n$.\\

\noindent \textbf{Step 1} \emph{Preliminary results:}  For $j \in \mathbb{N}$, we let 
\begin{equation}\label{def:Theta0j}
\Theta_{j} = r\partial_r T_{j} - 2(j-1)T_{j},
\end{equation}
which admits the following slowly growing tail from  \fref{est:T0iatinf} and \fref{est:T0iatinfdev}, 
\begin{equation}\label{est:Theta0j}
|\Theta_0(r)| = \Oc(r^{-4}), \quad |\Theta_{j}(r)| = \Oc(r^{2(j-1)}) \;\; \textup{for} \;\; j \geq 1,
 \quad \textup{as} \quad r \to \infty.
\end{equation}
and for $j\geq 1$:
\be \label{bd:improveddecayThetajTj}
\left| \Theta_j(r)+\frac{2}{\ln b}T_j(r)\right|\lesssim r^{2} \langle r\rangle^{2(j-2)}\left(1+\frac{2\ln (r+1)}{\ln b}\right).
\ee
We compute the following integral by integrating by parts:
\bee
\int_0^{\infty} r \Theta_0(r)dr&=& \lim_{R\rightarrow \infty}\int_0^R r \Theta_0(r)dr= \lim_{R\rightarrow \infty}\left( 2\int_0^{R}rT_0 (r)dr+\int_0^Rr^2 \pa_{r} T_0(r)dr\right) \\
&=& \lim_{r\rightarrow \infty} R^2T_0(R) =1.
\eee

From this and \fref{def:invAs}, as $|r\pa_r T_0+2T_0|\lesssim (1+r)^{-4}$ the following corrective term satisfies as $r\rightarrow \infty$:
\bee
\As_0^{-1}\Theta_0(r) &=& \Oc( \psi_0 (r)\ln (r))+\Oc( r^{-2})+\frac 12 \tilde \psi_0(r)\int_0^{\infty} \zeta f(\zeta)d\zeta = \frac 12+\Oc( r^{-2}\ln r),
\eee
and hence:
\be \label{bd:improveddecayTheta0T0}
\left|-\frac{2}{\ln b}T_1+\As_0^{-1}\Theta_0\right|\lesssim r^{2} \langle r\rangle^{-2}\left(1+\frac{2\ln (r+1)}{\ln b}\right).
\ee
These estimates show that
\begin{align}
& \| b^{j-1}\left( \Theta_j(r)+\frac{2}{\ln b}T_j(r)\right) \|_{X^{0}_{-1}}\lesssim \zeta_0^{2(j-1)} \|  \Theta_j(r)+\frac{2}{\ln b}T_j(r) \|_{X^{2(j-1)}_{-1}}\lesssim 1, \nonumber \\
& \quad \| -\frac{2}{\ln b}T_1+\As_0^{-1}\Theta_0\|_{\mathcal I^0_{-1}}\lesssim 1.\label{bd:forcingweightedspaceinner}
\end{align}

\noindent \textbf{Step 2} \emph{Equations satisfied by $S_{j}$ and $\Rc_{n}$:} Plugging the decomposition \fref{eq:forMinn} into \fref{eq:Mninn} and using $\As_0 T_{j} = -T_{j-1}$ with the convention $T_{-1} = 0$ yields
\begin{align*}
&\Big[\As_0 - b r\partial_r - 2b\big(1 - n +\frac{1}{\ln b}+\bar \alpha \big)\Big] \sum_{j = 0}^n c_{n,j} b^j T_{j} \\
 & \quad = -\sum_{j = 0}^{n - 1} c_{n,j}b^{j  +1} T_{j} \Big[2(n - j) + 2(j - 1) - 2(n-1)\Big]- \sum_{j = 0}^n c_{n,j}b^{j + 1} \Theta_{j} - 2\left(\frac{1}{\ln b}+\bar \alpha\right)\sum_{j = 0}^n c_{n,j}b^{j+1}T_{j}\\
 & \quad = - \sum_{j = 1}^n c_{n,j}b^{j + 1} \left(\Theta_{j} +\frac{2}{\ln b}T_j\right)-b\Theta_0-\frac{2b}{\ln b}T_0 -2\bar \alpha \sum_{j = 0}^n c_{n,j}b^{j + 1} T_{j},
\end{align*}
and 
\bee
&&\Big[\As_0 - b r\partial_r - 2b\big(1 - n +\frac{1}{\ln b}+\bar \alpha \big)\Big]b\left(-\frac{2}{\ln b}T_1+\As_0^{-1}\Theta_0\right)\\
&=&b\Theta_0+\frac{2b}{\ln b}T_0-b\Big[ r\partial_r +2\big(1 - n +\frac{1}{\ln b}+\bar \alpha \big)\Big]b\left(-\frac{2}{\ln b}T_1+\As^{-1}\Theta_0\right)
\eee
and
\begin{align*}
&\Big[\As_0 - b r\partial_r - 2b\big(1 - n + \tilde{\alpha}\big)\Big] \left( 2\bar{\alpha}\sum_{j = 0}^nb^{j + 1}\Big[-c_{n,j} T_{j+1} + S_{j}\Big] \right)\\
& \quad = 2\bar{\alpha}\sum_{j = 0}^n b^{j + 1} \left\{\As_0 S_{j} - \Big[b r\partial_r + 2b\big(1 - n + \tilde{\alpha}\big)\Big]\Big(-c_{n,j} T_{j+1} + S_{j}\Big)   \right\} + 2\bar{\alpha} \sum_{j = 0}^n c_{n,j}b^{j + 1}T_{j}. 
\end{align*}
We then rewrite equation \fref{eq:Mninn} as 
\begin{align}
0=& \Big[\As_0 - b r\partial_r - 2b\big(1 - n + \tilde{\alpha}\big)\Big] \phi_n^\inn \nonumber \\
=& \bar{\alpha}\sum_{j = 0}^n b^{j + 1} \left\{\As_0 S_{j} - b\Big[r\partial_r + 2\big(1 - n + \tilde{\alpha}\big)\Big]\Big(-c_{n,j} T_{j+1} + S_{j}\Big)   \right\}\nonumber \\
\nonumber & + b\Bigl\{\As_0 \Rc_{n} - b\Big[r\partial_r + 2\big(1 - n + \tilde{\alpha}\big)\Big]\Rc_{n} - \sum_{j = 1}^n c_{n,j}b^{j} \left(\Theta_{j}+\frac{2}{\ln b}T_j\right)\\
&-\Big[ r\partial_r +2\big(1 - n +\tilde \alpha \big)\Big]b\left(-\frac{2}{\ln b}T_1+\As_0^{-1}\Theta_0\right)\Bigr\} \label{eq:idMinn}
\end{align}

\noindent \textbf{Step 3} \emph{Computation of $\big(S_{j}\big)_{0 \leq j \leq n}$:} From equation \fref{eq:idMinn}, we choose $S_{j}$ to be the solution of the equation 
\begin{equation}\label{eq:S0j}
\As_0 S_{j} = b\Big[ r\partial_r + 2\big(1 - n + \tilde{\alpha}\big)\Big]\Big(-c_{n,j} T_{j+1} + S_{j}\Big).
\end{equation}
Note from part $(iii)$ of Lemma \ref{lemm:proAs} that $T_{j+1} \in \Ic_1^{ 2j}$ for $j \geq 0$. We aim at proving that for $b$ and $\zeta_0$ small enough, there exists a unique solution $S_{j} \in \Ic_1^{2j}$ to equation \fref{eq:S0j} via the Banach fixed point theorem. Let $\Gamma$ be the affine mapping acting on $f \in \Ic_1^{ 2j}$ defined as 
\begin{align*}
\Gamma(f) &= \As_0^{-1} \Big[b \big( r\partial_r + 2\big(1 - n + \tilde{\alpha}\big)\big)\big(-c_{n,j} T_{j+1} + f\big)\Big] = \Gamma(0) + D\Gamma(f),
\end{align*}
where $\As_0^{-1}$ is defined as in \fref{def:invAs} and 
\begin{align*}
\Gamma(0) &= bc_{n,j}\As_0^{-1}\left(\Big[ r\partial_r + 2\big(1 - n + \tilde{\alpha}\big)\Big]T_{j+1}\right), \quad D\Gamma(f) = b\As_0^{-1}\left(\Big[ r\partial_r + 2\big(1 - n + \tilde{\alpha}\big)\Big] f\right).
\end{align*}
We estimate from \fref{est:conLc0}, 
\begin{align*}
\|\Gamma(0)\|_{\Ic_1^{ 2j}}\lesssim R_0^{2} \|\Gamma(0)\|_{\Ic_1^{ 2j+2}}\lesssim R_0^2 b \|T_{j+1}\|_{\Ic_1^{2j}} \lesssim R_0^2b\lesssim \zeta_0^2,
\end{align*}
and for all $f \in \Ic_{1}^{a}$ with $ a = 2j$ or $a = 2j + 2$, 
\begin{align}
\nonumber \|D\Gamma(f)\|_{\Ic_1^{a}} &\lesssim R_0^{-2}\|D\Gamma(f)\|_{\Ic_1^{a+2}}  = R_0^2b\| \As_0^{-1}\left(\Big[ r\partial_r + 2\big(1 - n + \tilde{\alpha}\big)\Big] f\right)\|_{\Ic_1^{a+2}} \\
\label{est:DGamma} & \lesssim \zeta_0^2\| \Big[ r\partial_r + 2\big(1 - n + \tilde{\alpha}\big)\Big] f \|_{X_1^{a}}  \lesssim \zeta_0^2 \|f\|_{\Ic_1^{ a}} 
\end{align}
Since $0 < \zeta_0 \ll 1$ and $\Gamma$ is an affine mapping, the above estimates imply that $\Gamma$ is a contraction on $B_{\Ic_1^{ 2j}}(0, C\zeta_0^2)$ for some constant $C>0$ independent of the problem. Therefore, there exists a unique fixed point $S_{j} = \Gamma(S_{j})$ such that $\| S_{j}\|_{\Ic_1^{2j}}\lesssim \zeta^2_0$ so that the first estimate in \fref{est:S0j} holds.  For the estimates of $\partial_{\tilde{\alpha}}S_{j}$ and $\partial_b S_{j}$, we differentiate the relation $S_{j} = \Gamma(S_{j})$ to obtain
\begin{align*}
\partial_b S_{j} &= D\Gamma(\partial_b S_{j}) + \big(\partial_b \Gamma\big) (S_{j}), \quad \partial_{\bar{\alpha}} S_{j} = D\Gamma(\partial_{\bar{\alpha}} S_{j}) + \big(\partial_{\bar{\alpha}} \Gamma\big) (S_{j}),
\end{align*}
where we have the identities since $b \pa_b \tilde \alpha=-1/(\ln b)^2$ and $\pa_{\bar \alpha}\tilde \alpha =1$:
\begin{align}
\partial_b \Gamma(f) &= \As_0^{-1} \Big[\big( r\partial_r + 2\big(1 - n + \tilde{\alpha}-\frac{1}{|\ln b|^2}\big)\big)\big(-c_{n,j} T_{j+1} + f\big)\Big],\label{id:Gdevb}\\
\partial_{\bar{\alpha}} \Gamma(f) &= b\As_0^{-1}\Big[-c_{n,j}T_{j + 1} + f\Big].\label{id:Gdevnu}
\end{align}
From \fref{est:DGamma}, we see that $\|D\Gamma\|_{\Ic_1^{ a} \to \Ic_{1}^{ a}} \lesssim \zeta_0^2$ with $a = 2j + 2$ or $a = 2j$. Hence $Id-D\Gamma$ is invertible and:
\begin{align*}
\big\|\partial_b S_{j}\big\|_{\Ic_1^{2j+2}} = \big\|(\textup{Id} - D\Gamma)^{-1} (\partial_b \Gamma)(S_{j}) \big\|_{\Ic_1^{ 2j+2}} \lesssim  \big\|(\partial_b \Gamma)(S_{j}) \big\|_{\Ic_1^{ 2j+2}}, \\
\big\|\partial_{\bar{\alpha}} S_{j}\big\|_{\Ic_1^{ 2j}} = \big\|(\textup{Id} - D\Gamma)^{-1} (\partial_{\bar{\alpha}} \Gamma)(S_{j})\big\|_{\Ic_1^{2j}} \lesssim  \big\|(\partial_{\bar{\alpha}} \Gamma)(S_{j}) \big\|_{\Ic_1^{2j}}.
\end{align*}
We estimate from \fref{est:conLc0} and \fref{id:Gdevb},
\begin{align*}
&\big\|(\partial_b \Gamma)(S_{j}) \big\|_{\Ic_1^{ 2j+2}} \quad \lesssim \|T_{j+1}\|_{\Ic_1^{ 2j}} + \|S_{j+1}\|_{\Ic_1^{ 2j}}  \lesssim 1.
\end{align*}
Similarly, we estimate from \fref{est:conLc0} and \fref{id:Gdevnu}, 
\begin{align*}
&\big\|(\partial_{\bar{\alpha}} \Gamma)(S_{j}) \big\|_{\Ic_1^{ 2j}} \lesssim bR_0^2 \Big(\|T_{j+1}\|_{\Ic_1^{ 2j}} + \|S_{j+1}\|_{\Ic_1^{2j}}\Big) \lesssim \zeta_0^2,
\end{align*}
which concludes the proof of \fref{est:S0j}.\\

\noindent \underline{\textit{Refinement for $n=1$}}. We do not give technical details are these are the very same ones as above for the general case. For $n=1$ the $S_0$ equation is:
$$
\As_0 S_{0} = b\Big[ r\partial_r + 2 \tilde{\alpha} \Big]\Big(- T_{1} + S_{0}\Big).
$$
As $ \| b(r\partial_r + 2 \tilde{\alpha})  T_{1}\|_{\Ic^0_0}\lesssim b$ from \fref{est:T0iatinf} and \fref{eq:Mninn}  we get $ \| S_0\|_{\Ic_0^{2}}+\| b\pa_b S_0 \|_{\Ic_{0}^2}+\| b\pa_{\tilde \alpha} S_0 \|_{\Ic_{0}^2}\lesssim \zeta_0^2b$ by the same strategy as above. The $S_1$ equation is:
$$
\As_0 S_{1} = b\Big[ r\partial_r + 2  \tilde{\alpha} \Big]\Big(-2 T_{2} + S_{1}\Big).
$$
Let $\hat S_1=-\frac{1}{2} \sum_{i=2}^{\infty} \frac{(1)_{i-1}}{(2)_i i! 2^i}b^{i}r^{2i}\ln r$, which produces $\left(\partial_{rr}+\frac 3 r-br\partial_r\right)\hat S_1=-\frac{br^2\ln r}{4}$. Looking for a solution $S_1=\hat S_1+\tilde S_1$ produces
$$
\As_0  \tilde S_1=b\Big[ r\partial_r + 2  \tilde{\alpha} \Big]\tilde S_1+b\left(\frac{r^2\ln r}{4}-2r\partial_r T_2\right)-4\tilde \alpha T_2 +b2\tilde \alpha \hat S_{1}-\left(\left(\frac{4r}{\langle r\rangle^2}-\frac 4r\right)\partial_r +\frac{8}{\langle r\rangle^4}\right) \hat S_{1}.
$$
The source term above is of size $1$ in $\Ic_0^0$ from \fref{est:T0iatinf} and \fref{eq:Mninn} so that from the strategy used above one obtains $\| \tilde S_1 \|_{\Ic^2_0}+\| b\partial_b\tilde S_1 \|_{\Ic^2_0}+\| \partial_{\bar \alpha }\tilde S_1 \|_{\Ic_1^2}\lesssim \zeta_0^2$.\\

\noindent \underline{\textit{Refinement for $n=0$}}. For $n=0$ the $S_0$ equation is:
$$
\As_0 S_{0} = b\Big[ r\partial_r + 2\big(1 + \tilde{\alpha}\big)\Big]\Big(- T_{1} + S_{0}\Big).
$$
We look for a solution $S_0=\hat S_0(r+1)+\tilde S_0$ with $\hat S_0=\frac 12 \sum_{i=1}^{\infty} \frac{1}{(2)_i2^i}b^i r^{2i}\log (r)$. As $(\partial_{rr}+\frac 3 r\partial_r-b(r\partial_r+2))\hat S_0=b\log r$, $\tilde S_0$ solves

\bee
\As_0  \tilde S_0 & = &b\Big[ r\partial_r + 2\big(1 + \tilde{\alpha}\big)\Big]\tilde S_{0}\\
&&-b(2T_1+\log (r+1))-b(r\partial_r +2\tilde \alpha)T_1 -\left(\left(\frac{4r}{\langle r\rangle^4}-\frac 1r-\frac{3}{r+1}\right)\partial_r +\frac{8}{\langle r\rangle^4}+b\partial_r-2b\tilde \alpha\right) \hat S_{0}(r+1).
\eee

The source term above is of size $b$ in $\Ic_0^0$ from \fref{est:T0iatinf} and \fref{eq:Mninn} so that from the strategy used above  $\| \tilde S_0 \|_{\Ic^2_0}+\| b\partial_b \tilde S_0 \|_{\Ic^2_0}+\| \pa_{\bar \alpha}\tilde S_0 \|_{\Ic^2_1}\lesssim b\zeta_0^2$.\\

\noindent \textbf{Step 4} \emph{Computation of $\Rc_{n}$}: From \fref{eq:idMinn}, we choose $\Rc_{n}$ to be the solution of the equation 
\bea 
\nonumber \As_0 \Rc_{n} &=&  b\Big[r\partial_r + 2\big(1 - n + \tilde{\alpha}\big)\Big]\Rc_{n}  + \sum_{j = 1}^n c_{n,j}b^{j} \left(\Theta_{j}+\frac{2}{\ln b}T_j\right),\\
\label{eq:R0n} &&+\Big[ r\partial_r +2\big(1 - n +\tilde \alpha \big)\Big]b\left(-\frac{2}{\ln b}T_1+\As_0^{-1}\Theta_0\right).
\eea
where $\Theta_{j}$ is introduced in \fref{def:Theta0j}. The computation is similar to that for $S_{j}$. We let $\Gamma$ be the affine mapping $\Gamma(f) = \Gamma(0) + D\Gamma(f)$, where 
\begin{align*}
\Gamma(0) &= -b\As_0^{-1}\left[\sum_{j = 1}^n c_{n,j}b^{j-1} \left(\Theta_{j}+\frac{2}{\ln b}T_j\right)-\Big[ r\partial_r +2\big(1 - n +\tilde \alpha \big)\Big]\left(-\frac{2}{\ln b}T_1+\As_0^{-1}\Theta_0\right)\right],
\end{align*}
$$
D\Gamma(f) = b\As_0^{-1}\Big[\big(r\partial_r + 2\big(1 - n + \tilde{\alpha}\big)\big)f\Big].
$$
From \fref{bd:forcingweightedspaceinner} and \fref{est:conLc0} we obtain:
\begin{align*}
\|\Gamma(0)\|_{\Ic_{-1}^{2}} & \lesssim b \sum_{j = 1}^n \left\|\As_0^{-1} b^{j-1} \left(\Theta_{j}+\frac{2}{\ln b}T_j\right)\right\|_{\Ic_{-1}^{2}}+b\left\| \As^{-1}_0(-\frac{2}{\ln b}T_1+\As_0^{-1}\Theta )\right\|_{\Ic_{-1}^2}\\
&\lesssim b \sum_{j = 1}^n \left\| b^{j-1} \left(\Theta_{j}+\frac{2}{\ln b}T_j\right)\right\|_{\Ic_{-1}^{0}}+b\left\| -\frac{2}{\ln b}T_1+\As_0^{-1}\Theta \right\|_{\Ic_{-1}^0}\lesssim b.
\end{align*}
Using \fref{est:conLc0}, we estimate for all $f\in \Ic_{-1}^{2}$,
\begin{align}
\|D\Gamma(f)\|_{\Ic_{-1}^{2}}\lesssim b \| \big(r\partial_r + 2\big(1 - n + \tilde{\alpha}\big)\big)f \|_{X_{-1}^0} \lesssim bR_0^2 \|f\|_{\Ic_{-1}^{2}} \lesssim \zeta_0^2 \|f\|_{\Ic_{-1}^{2 }}.\label{est:DGamma1}
\end{align}
We then deduce that $\Gamma(f)$ is contraction on $B_{\Ic_{-1}^{2}}(0,bC)$ for some constant $C>0$, hence, there exists a unique fixed point $\Rc_{n} = \Gamma(\Rc_{n})$ satisfying $\| \Rc_{n} \|_{\Ic_{-1}^{2}}\lesssim b$. As $\| \Rc_{n} \|_{\Ic_{-1}^{0}}\lesssim b^{-1}\| \Rc_{n} \|_{\Ic_{-1}^{2}}\lesssim 1$ the first estimate in \fref{est:R0n} holds. For the estimates of $\partial_b \Rc_{n}$ and $\partial_{\bar{\alpha}}\Rc_{n}$, we differentiate the relation $\Rc_{n} = \Gamma(\Rc_{n})$: 
\begin{align*}
\partial_b \Rc_{n} &= D\Gamma(\partial_b \Rc_{n}) + \big(\partial_b \Gamma\big) (\Rc_{n}),  \quad 
\partial_{\bar{\alpha}} \Rc_{n} = D\Gamma(\partial_{\bar{\alpha}} \Rc_{n}) + \big(\partial_{\bar{\alpha}} \Gamma\big) (\Rc_{n}),
\end{align*}
where we have the identities since $b \pa_b \tilde \alpha=-1/(\ln b)^2$ and $\pa_{\bar \alpha}\tilde \alpha=1$,
\bee
\nonumber \partial_b \Gamma(f) &=& \As_0^{-1} \sum_{j = 1}^n c_{n,j}b^{j-1} \left(j\left(\Theta_{j}+\frac{2}{\ln b}T_j\right)-\frac{2}{|\ln b|^2}T_j\right)\\
&&-\As_0^{-1}\Big[ r\partial_r +2\big(1 - n +\tilde \alpha-\frac{1}{|\ln b|^2} \big)\Big]\left(-\frac{2}{\ln b}T_1+\As_0^{-1}\Theta_0+\frac{2}{|\ln b|^2}T_j\right)\\
&&+\As_0^{-1}\Big[\big(r\partial_r + 2\big(1 - n + \tilde{\alpha}-\frac{1}{|\ln b|^2}\big)\big)f\Big],
\eea
$$
\partial_{\bar{\alpha}} \Gamma(f) = b\As_0^{-1}f+b\As_0^{-1}\left(-\frac{2}{\ln b}T_1+\As_0^{-1}\Theta_0\right)
$$
We have derived from \fref{est:DGamma1} that $\|D\Gamma\|_{\Ic_{-1}^{2} \to \Ic_{-1}^{2}} \lesssim \zeta_0^2$, hence, $\textup{Id} - D\Gamma$ is invertible on $\Ic_{-1}^{2}$. In particular, we have the estimates
\begin{align*}
\|\partial_b \Rc_{n}\|_{\Ic_{-1}^{2}} = \big\|(\textup{Id} - D\Gamma)^{-1}\big(\partial_b \Gamma\big) (\Rc_{n}) \big\|_{\Ic_{-1}^{2}} \lesssim \big\|\big(\partial_b \Gamma\big) (\Rc_{n}) \big\|_{\Ic_{-1}^{2}},\\
\|\partial_{\bar{\alpha}} \Rc_{n}\|_{\Ic_{-1}^{2}} = \big\|(\textup{Id} - D\Gamma)^{-1}\big(\partial_{\bar{\alpha}} \Gamma\big) (\Rc_{n}) \big\|_{\Ic_{-1}^{2}} \lesssim \big\|\big(\partial_{\bar{\alpha}} \Gamma\big) (\Rc_{n}) \big\|_{\Ic_{-1}^{2}}.
\end{align*}
Using \fref{bd:forcingweightedspaceinner}, $\| \frac{b^{j-1}}{|\log b|^2}T_j\|_{\Ic^0_{-1}}\lesssim 1$, the estimate on $\Rc_{n}$, we have by \fref{est:conLc0}:
\begin{align*}
&\big\|\big(\partial_b \Gamma\big) (\Rc_{n}) \big\|_{\Ic_{-1}^{2}} \leq
\sum_{j = 1}^n \left( b^{j-1} \| \As_0^{-1}  \left(\Theta_{j}+\frac{2}{\ln b}T_j\right)\|_{\Ic^2_{-1}}+\| \As_0^{-1} b^{j-1}\frac{1}{|\ln b|^2}T_j\|_{\Ic^{2}_{-1}}\right)\\
&+\| \As_0^{-1}\left(r\partial_r +2\big(1 - n +\tilde \alpha-\frac{1}{|\ln b|^2} \big)\Big]\left(-\frac{2}{\ln b}T_1+\As_0^{-1}\Theta_0+\frac{2}{|\ln b|^2}T_j\right)\right)\|_{\Ic^{2}_{-1}}\\
&+\| \As_0^{-1}\Big[\big(r\partial_r + 2\big(1 - n + \tilde{\alpha}-\frac{1}{|\ln b|^2}\big)\big)\Rc_n \|_{\Ic^{2}_{-1}}\\
& \quad \lesssim 1+\| \Rc_n \|_{\Ic^{0}_{-1}} \lesssim 1+b^{-1}\| \Rc_n \|_{\Ic^{2}_{-1}} \lesssim 1.
\end{align*}
Similarly, we have by \fref{est:conLc0},
\begin{align*}
\big\|\big(\partial_{\bar{\alpha}} \Gamma\big) (\Rc_{n}) \big\|_{\Ic_{-1}^{2}} &\leq \| b\As_0^{-1}\Rc_n\|_{\Ic^{2}_{-1}}+b\|\As^{-1}_0\left(-\frac{2}{\ln b}T_1+\As_0^{-1}\Theta_0\right)\|_{\Ic_{-1}^2}\\
& \lesssim \| b \Rc_n\|_{\Ic^{0}_{-1}}+b\| -\frac{2}{\ln b}T_1+\As_0^{-1}\Theta_0 \|_{\Ic_{-1}^0} \lesssim  \| \Rc_n\|_{\Ic^{2}_{-1}}+b \lesssim b.
\end{align*}
Hence $\|\partial_{\bar{\alpha}} \Rc_{n}\|_{\Ic_{-1}^{0}}\lesssim b^{-1}\|\partial_{\bar{\alpha}} \Rc_{n}\|_{\Ic_{-1}^{2}} \lesssim 1$.\\

\noindent \underline{ \textit{Computation of $\Rc_{1}$}:} For $n=1$ a refinement is necessary. The equation for $\Rc_1$ is:
\bea 
\nonumber \As_0 \Rc_{1} &=&  b\Big[r\partial_r + 2  \tilde{\alpha} \Big]\Rc_{1}  + 2 b \left(r\pa_r T_1+\frac{2}{\ln b}T_1\right)+\Big[ r\partial_r +2 \tilde \alpha \Big]b\left(-\frac{2}{\ln b}T_1+\As_0^{-1}\Theta_0\right).
\eea
We look for a solution under the form $\Rc_1(r)= \Rc_{1,1}(r)+\Rc_{1,2}(r+1)+\tilde \Rc$ where
$$
\Rc_{1,1}=- (\frac 12-\frac{1}{2\ln b}) \sum_{i=1}^\infty \frac{(1)_{i-1}}{(2)_i i!2^i}b^i r^{2i}, \ \ (\pa_{rr}+\frac 3r\partial_r-br\partial_r)\Rc_{1,1}= -(1-\frac{1}{\ln b})b.
$$
$$
\Rc_{1,2}=-\frac{1}{2\log b} \sum_{i=1}^{\infty} \frac{(1)_{i-1}}{(2)_i i! 2^i}b^ir^{2i}\left[2\ln (r) -\frac 1i -\Psi (i+2)-\gamma \right],
$$
$$
 \big(\pa_{r}^2+\frac 3r\partial_r-br\partial_r\big)\Rc_{1,2}=-\frac{b}{\log b}(2\log r -1),
$$
so that
\bee
&& \As_0 \tilde{\Rc}_1-b(r\partial_r+2\tilde \alpha)\tilde{\Rc}_1\\
&=& \left(1-\frac{1}{\log b}\right)b+\frac{b}{\log b}(2\log (r+1) -1)+2b(r\partial_r T_1+\frac{2}{\log b}T_1)+\Big[ r\partial_r +2 \tilde \alpha \Big]b\left(-\frac{2}{\ln b}T_1+\As_0^{-1}\Theta_0\right)\\
&& -\left(\left(\frac{4r}{\langle r\rangle^4}-\frac 4r\right)\partial_r +\frac{8}{\langle r\rangle^4}\right) \Rc_{1,1}-\left(\left(\frac{4r}{\langle r\rangle^4}-\frac 1r-\frac{3}{r+1}\right)\partial_r +\frac{8}{\langle r\rangle^4}+b\partial_r\right) \Rc_{1,2}(r+1)\\
&&+2b\tilde \alpha (\Rc_{1,1}+\Rc_{1,2}(r+1))
\eee
Each line in the right hand side above contains cancellations as $r\rightarrow \infty$: the first is $\Oc(br^{-1}\log r)$ from \fref{est:T0iatinf}, so is the second from the definition of $\Rc_{1,1}$ and $\Rc_{1,2}$. For the last line, $\|\Rc_{1,1}+\Rc_{1,2}\|_{\Ic_{-1}^0}\lesssim 1$ and $|\tilde \alpha|\lesssim |\log b|^{-1}$. This shows that the right hand side is of size $|\log b|^{-1}$ in $\Ic_{-1}^0$. So that $\|\tilde \Rc_1\|_{\Ic_{-1}^0}\lesssim |\log b|^{-1}$, $\|b\pa_b\tilde \Rc_1\|_{\Ic_{-1}^0}\lesssim |\log b|^{-1}$ and $\| \partial_{\bar \alpha} \tilde \Rc_1\|_{\Ic_{-1}^0}\lesssim 1$.\\

\noindent \underline{ \textit{Computation of $\Rc_{0}$}:} For $n=0$ a refinement is also necessary. The equation for $\Rc_0$ is:
$$
\nonumber \As_0 \Rc_{0} =  b\Big[r\partial_r + 2\big(1+ \tilde{\alpha}\big)\Big]\Rc_{0}  +\Big[ r\partial_r +2\big(1 +\tilde \alpha \big)\Big]b\left(-\frac{2}{\ln b}T_1+\As_0^{-1}\Theta_0\right).
$$
We look for a solution under the form $\Rc_0(r)= \Rc_{0,1}(r)+\Rc_{0,2}(r+1)+\tilde \Rc_0$ where
$$
\Rc_{0,1}=\frac 12  \sum_{i=1}^{\infty} \frac{1}{(2)_i2^i}b^i r^{2i}, \ \ (\partial_{rr}+\frac 3r \partial_{r}-b(r\partial_r+2))\Rc_{0,1}=b
$$
$$
\Rc_{0,2}=\frac{1}{2\log b} \sum_{i=1}^{\infty}\frac{1}{(2)_i 2^i}b^ir^{2i}[2\log (r) -\Psi (i+2)-\gamma], \ \ (\partial_{rr}+\frac 3r \partial_{r}-b(r\partial_r+2))\Rc_{0,2}=\frac{2b}{\log b}\log r
$$
so that
\bee
&& \As_0 \tilde{\Rc}_0-b(r\partial_r+2\tilde \alpha)\tilde{\Rc}_0\\
&=&-b-\frac{2b}{\log b}\log (r+1)+\Big[ r\partial_r +2+2 \tilde \alpha \Big]b\left(-\frac{2}{\ln b}T_1+\As_0^{-1}\Theta_0\right)\\
&& -\left(\left(\frac{4r}{\langle r\rangle^4}-\frac 4r\right)\partial_r +\frac{8}{\langle r\rangle^4}\right) \Rc_{1,1}-\left(\left(\frac{4r}{\langle r\rangle^4}-\frac 1r-\frac{3}{r+1}\right)\partial_r +\frac{8}{\langle r\rangle^4}+b\partial_r\right) \Rc_{1,2}(r+1)\\
&&+2b\tilde \alpha (\Rc_{0,1}+\Rc_{0,2}(r+1))
\eee
In the right hand side, the first line is $\Oc(br^{-1}\log r)$ from \fref{est:T0iatinf}, and so is the second from the definition of $\Rc_{0,1}$ and $\Rc_{0,2}$. For the last line, $\|\Rc_{0,1}+\Rc_{0,2}\|_{\Ic_{-1}^0}\lesssim 1$ and $|\tilde \alpha|\lesssim |\log b|^{-1}$. Therefore the right hand side is of size $|\log b|^{-1}$ in $\Ic_{-1}^0$, and we get $\|\tilde \Rc_0\|_{\Ic_{-1}^0}+\| b\partial_b \tilde \Rc_0\|_{\Ic_{-1}^0}\lesssim |\log b|^{-1}$ and $\| \partial_{\bar \alpha} \tilde \Rc_0\|_{\Ic_{-1}^0}\lesssim 1$.\\

\noindent \textbf{Step 5} \emph{Number of zeros}: For the case $n=0$, the identity \fref{eq:forMinn} gives with \fref{id:recurrencecnj}, \fref{def:T0i} and \fref{def:psi01}:
\begin{equation}\label{eq:expphi0n}
\phi_0^\inn(r) = \frac{r^2}{\langle r \rangle^4} +b\left(-\frac{2}{\ln b}T_1+\As_0^{-1}\Theta_0\right)+ 2\bar{\alpha}b \Big(-T_{1} + S_{0}\Big)  + b\Rc_{0} .
\end{equation}
From the pointwise bounds \fref{bd:improveddecayTheta0T0}, \fref{est:T0iatinf}, \fref{id:refinementSjn=0} and \fref{id:refinementRn=0}, and $|\bar \alpha|\lesssim |\ln b|^{-2}$ we infer:
$$
\left|b\left(-\frac{2}{\ln b}T_1+\As_0^{-1}\Theta_0\right)+ 2\bar{\alpha}b \Big(-T_{1} + S_{0}\Big)  + b\Rc_{0}\right|<\frac{r^2}{\langle r \rangle^4}
$$
on $(0,R_0]$ for $\zeta_0$ small enough and $b$ small enough, so $\phi_0^\inn(r) $ has no zero. For $n\geq 1$ one has that, from the identities \fref{eq:forMinn} and \fref{id:recurrencecnj}, the bounds \fref{bd:improveddecayTheta0T0}, \fref{est:T0iatinf}, \fref{est:S0j}, \fref{est:R0n}, \fref{id:refinementRn=1}, \fref{id:refinementSjn=1} and $|\bar \alpha|\lesssim |\ln b|^{-2}$:
\be \label{estimationphiinnperturbation}
\phi^\inn_n=T_0(r)+2nbT_1(r)+\tilde \phi^\inn_n(r), \quad \mbox{ with } \quad |\tilde \phi^\inn_n (r)|+ r |\partial_r \tilde \phi^\inn_n (r)| \leq  b\zeta_0^2 r^2\langle r\rangle^{-2} \langle \ln \langle r\rangle \rangle,
\ee
where the bound is valid on $[0,R_0]$. We recall from \fref{est:T0iat0}:
\be \label{estimationphiinnperturbation1}
T_0(r)+2nbT_1(r)= \frac{1}{r^2}-bn\log (r) +\Oc( b+r^{-3}) \quad \mbox{as } r\rightarrow \infty,
\ee
\be \label{estimationphiinnperturbation2}
\partial_r T_0(r)+2nb\partial_r T_1(r)= \frac{-2}{r^3}-\frac{bn}{r}+\Oc( b|\ln r|r^{-2}+r^{-4}) \quad \mbox{as } r\rightarrow \infty.
\ee
From the above identities, we obtain that $\phi^\inn $ vanishes exactly once on $[0,R_0]$ at the point $r_0$,
\be \label{id:defr0}
r_0=\frac{1}{\sqrt b \sqrt{n|\log b|}}(1+\Oc( \zeta_0^2),
\ee
and that there exists a constant $c>0$ such that
\be \label{as:phiinnr0}
\frac{c(r_0-r)}{r_0r^2} \leq \phi^\inn_n (r)\leq  \frac{(r_0-r)}{c r_0r^2} \mbox{ on } [1,r_0], \quad \frac{(r_0-r)}{c r_0r^2} \leq \phi^\inn_n (r)\leq \frac{c(r_0-r)}{r_0r^2} \mbox{ on } [r_0,R_0].
\ee

\end{proof}

\begin{lemma} \label{lem:perturbationinner}

Let $V$ be a smooth function satisfying $|\pa_r^k V|\lesssim |\ln b|^{-1}r^{2-k}\langle r \rangle^{-4}$ for $k=0,1$. Then for any fixed $n$, for $\zeta_0$ small enough, there exists $b^*>0$ such that for all $0<b<b^*$ and $\tilde \alpha=\Oc( |\ln b|^{-1})$, there exists a solution $\phi^{\inn, V}_n$ to
$$
\As_0 \phi^{\inn, V}_n-b\big[r\pa_r+2(1-n+\tilde \alpha)\big]\phi^{\inn, V}_n +r^{-1}\pa_r (V\phi^{\inn, V}_n)=0
$$
on $[0,R_0]$ which satisfies
\be \label{bd:perturbationinner}
\| \phi^{\inn, V}_n-\phi^{\inn}_n \|_{\Ic^{-2}_0}\lesssim \frac{1}{|\ln b|}.
\ee

\end{lemma}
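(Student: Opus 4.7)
Set $\psi := \phi^{\inn, V}_n - \phi^\inn_n$, which solves
$$\As_0\psi - b\left[r\pa_r + 2(1-n+\tilde\alpha)\right]\psi + \frac{1}{r}\pa_r(V\psi) = -\frac{1}{r}\pa_r(V\phi^\inn_n) \quad \textup{on } [0,R_0].$$
The plan is to convert this to a fixed-point equation by applying the explicit inverse $\As_0^{-1}$ of Lemma \ref{lemm:proAs}(i):
$$\psi = \Gamma(\psi) := \As_0^{-1}\left[ b\left(r\pa_r + 2(1-n+\tilde\alpha)\right)\psi - \frac{1}{r}\pa_r(V\psi)\right] - \As_0^{-1}\left[\frac{1}{r}\pa_r(V\phi^\inn_n)\right],$$
and to show that $\Gamma$ is a contraction on the ball $B_{\Ic^{-2}_0}(0, C|\ln b|^{-1})$ for a suitable constant $C$, which yields the desired bound \fref{bd:perturbationinner} at once.

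The first step is to estimate the source. From the expansion \fref{eq:forMinn} and the bounds on $T_j$, $S_j$, $\Rc_n$ established in Lemma \ref{lemm:inn0}, one has the pointwise control $|\phi^\inn_n(r)|+ r|\pa_r \phi^\inn_n(r)| \lesssim r^2 \langle r\rangle^{-2} \langle \sqrt b r\rangle^{2n}(1+\log\langle r\rangle)$, and in the inner zone $r\leq R_0$ the factor $\langle \sqrt b r\rangle^{2n}$ is bounded. Combined with the hypothesis $|\pa_r^k V|\lesssim |\ln b|^{-1}r^{2-k}\langle r\rangle^{-4}$ for $k=0,1$, a direct product rule computation gives $|r^{-1}\pa_r(V\phi^\inn_n)|\lesssim |\ln b|^{-1}r^2\langle r\rangle^{-8} (1+\log\langle r\rangle)$, so this source belongs to $X^{-6}_0$ (with a harmless $\log$ loss that can be absorbed by enlarging $a$ slightly) with norm $\lesssim |\ln b|^{-1}$. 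Applying the continuity of $\As_0^{-1}$ from \fref{est:conLc0} yields $\|\As_0^{-1}[r^{-1}\pa_r(V\phi^\inn_n)]\|_{\Ic^{-4}_0}\lesssim |\ln b|^{-1}$, and the embedding $\Ic^{-4}_0\hookrightarrow \Ic^{-2}_0$ controls the source in $\Ic^{-2}_0$ by $C|\ln b|^{-1}$.

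Next, one estimates $D\Gamma$ on $\Ic^{-2}_0$. The $b$-dependent drift piece is handled exactly as in Step 3 of the proof of Lemma \ref{lemm:inn0}: combining \fref{est:conLc0} with $\|f\|_{\Ic^{-2}_0}\lesssim R_0^2 \|f\|_{\Ic^{0}_0}$, one obtains $\|\As_0^{-1}[b(r\pa_r + 2(1-n+\tilde\alpha))\psi]\|_{\Ic^{-2}_0}\lesssim R_0^2 \cdot b\|\psi\|_{\Ic^{-2}_0}= \zeta_0^2\|\psi\|_{\Ic^{-2}_0}$, which is small for $\zeta_0 \ll 1$. The $V$-dependent piece is estimated exactly as the source above, with $\phi^\inn_n$ replaced by $\psi\in \Ic^{-2}_0$, yielding $\|\As_0^{-1}[r^{-1}\pa_r(V\psi)]\|_{\Ic^{-2}_0}\lesssim |\ln b|^{-1}\|\psi\|_{\Ic^{-2}_0}$. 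Hence $\Gamma$ is a contraction on $B_{\Ic^{-2}_0}(0, C|\ln b|^{-1})$ for $\zeta_0$ and $b$ small enough, and the unique fixed point $\psi$ satisfies \fref{bd:perturbationinner}.

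The main subtlety, rather than any analytic obstacle, is to ensure that $V$ acts as a \emph{small} perturbation in the weighted norms $\Ic^a_0$, despite not being small in $L^\infty$ after rescaling back to the $\zeta$ variable (as flagged in the remark after Proposition \ref{pr:spectralbarAzeta}). The norms $\Ic^a_0$ are tuned to the $r^2$ vanishing of the resonance $T_0 = \psi_0$ of $\As_0$ at the origin, and the hypothesis on $V$ encodes exactly this $r^2$ vanishing with an extra $|\ln b|^{-1}$ gain, which is what allows the contraction to close despite the $L^\infty$-largeness of the corresponding potential.
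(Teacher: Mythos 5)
Your fixed-point scheme via $\As_0^{-1}$ misapplies the continuity estimate \fref{est:conLc0}. That estimate is proved only for $a>-2$; the reason is that the variation-of-constants formula \fref{def:invAs} contains the term $\tilde\psi_0(r)\int_0^r \xi f(\xi)\,d\xi$, and when $f$ decays faster than $r^{-2}$ the integral $\int_0^r\xi f\,d\xi$ converges to a generically nonzero constant, so $\As_0^{-1}f$ tends to a constant at the outer edge $r\sim R_0$ rather than decaying. Concretely, with your source bound $|r^{-1}\pa_r(V\phi^\inn_n)|\lesssim |\ln b|^{-1}r^2\langle r\rangle^{-8}$ you invoke $X^{-6}_0\to\Ic^{-4}_0$ (i.e. $a=-6$), which is outside the validity range, and the claimed conclusion $\|\As_0^{-1}[r^{-1}\pa_r(V\phi^\inn_n)]\|_{\Ic^{-4}_0}\lesssim |\ln b|^{-1}$ is false: the correct output via \fref{est:conLc0} (for any admissible $a>-2$) lives in $\Ic^{a+2}_0$ with $a+2>0$, i.e. is merely bounded, which cannot be placed in $\Ic^{-2}_0$. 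The same problem defeats your estimate of $D\Gamma$ on the $V$-piece.

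The missing ingredient is precisely the cancellation flagged in the remark after Proposition \ref{pr:spectralbarAzeta}: the source $\xi^{-1}\pa_\xi(Vg)$ is a total derivative after multiplication by $\xi$, so the dangerous integral in the resolvent is $\int_0^r \xi\cdot\xi^{-1}\pa_\xi(Vg)\,d\xi=V(r)g(r)$, which \emph{does} decay thanks to the hypothesis on $V$. This is why the perturbation is, in effect, orthogonal to the adjoint resonance direction; but this cancellation is invisible to the abstract norm bound \fref{est:conLc0} and must be extracted by explicit integration by parts. The paper's proof implements exactly this: it builds the second fundamental solution $\Gamma$ of the full operator $\As_0-b(r\pa_r+2(1-n+\tilde\alpha))$, writes the resolvent via the Wronskian formula, and integrates the $\phi^\inn/W$ term (whose leading part is $e^{-b\xi^2/2}$) against $\pa_\xi(Vf)$ by parts to produce $V(r)f(r)e^{-br^2/2}$ plus controllable remainders. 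Your closing paragraph identifies the qualitative idea (the $r^2$ vanishing of $V$ and the resonance), but your norm estimates do not actually encode it; as written, the contraction does not close in $\Ic^{-2}_0$.
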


\begin{proof}

We only treat the case $n\geq 1$. Indeed, from Lemma \ref{lemm:inn0}, $\phi^\inn$ vanishes once on $[0,R_0]$ for $n\geq 1$ at the point $r_0$ defined by \fref{id:defr0}, whereas for $n=0$ it does not. Reintegrating the Wronskian relation is then harder in the case $n=1$, and the case $n=0$ can be treated with the very same ideas but simpler computations. We shall use results on $\phi^\inn$ proved in the "\emph{Number of zeros}" part of the proof of Lemma \ref{lemm:inn0}.\\

\noindent \textbf{Step 1} \emph{Uniform asymptotic for the second fundamental solution}: We claim that there exists $\Gamma$ another linearly independent solution to
$$
\As_0 \Gamma-b(r\pa_r+2(1-n+\tilde \alpha))\Gamma =0
$$
on $[0,R_0]$ such that:
\be \label{bd:controlegammamodif}
|\Gamma(r)|\leq C \mbox{ and } |\pa_r\Gamma (r)|\leq Cr|\ln r|\langle r\rangle^{-2}\langle \ln r \rangle^{-1} \mbox{ on } [0,R_0]
\ee
with a constant $C$ that is independent of $b$ and $\tilde \alpha$. Indeed, from standard arguments, the Wronskian $W=\Gamma'\phi_n^\inn-\Gamma \phi_n^{\inn '}$ is (fixing the integration constant without loss of generality):
\be \label{id:perturbationwronskian}
W=\frac{r}{(1+r^2)^2}e^{b\frac{r^2}{2}}
\ee
so the second fundamental solution is given by, reintegrating the Wronskian relation (we again fix here an integration constant without loss of generality):
$$
\Gamma (r)=\phi^\inn (r) \int_1^r \frac{W(\xi)}{|\phi^\inn (\xi)|^2}d\xi=\phi^\inn (r) \int_1^r \frac{\xi e^{b\frac{\xi^2}{2}}}{(1+\xi^2)^2|\phi^\inn (\xi)|^2}d\xi.
$$
The asymptotic near the origin follows from \fref{eq:forMinn}, \fref{estimationphiinnperturbation} and \fref{est:T0iat0}, and direct computations, so we only focus on the asymptotic of $\Gamma$ for $r$ large. For $1\leq r \leq r_0$ from \fref{as:phiinnr0}:
$$
|\Gamma (r)|\lesssim \frac{(r_0-r)r_0}{r^2}\int_1^r \frac{\xi}{(r_0-\xi)^2}d\xi \lesssim 1.
$$
Next, for $r\geq r_0$, we avoid the singularity in the integral by noticing that there exists a constant $C$ such that
$$
\Gamma (r)=C\phi^\inn (r)+\phi^\inn (r)\int_{R_0}^r \frac{W(\xi)}{|\phi^{\inn}(\xi)|^2}d\xi .
$$
To estimate $C$, one computes from the first formula for $\Gamma$ and the asymptotic \fref{as:phiinnr0} near $r_0$ of $\phi^\inn$:
$$
\Gamma'(r_0)=\lim_{r\uparrow r_0}\left(\partial_r\phi^\inn (r) \int_1^r \frac{W(\xi)}{|\phi^\inn (\xi)|^2}d\xi+ \frac{W(r)}{\phi^\inn (r)}\right)=\Oc( r_0^{-1}).
$$
Similarly, we have
$$
\Gamma'(r_0)=C(\phi^\inn (r_0))'+\lim_{r\downarrow r_0}\left(\partial_r\phi^\inn (r) \int_{R_0}^r \frac{W(\xi)}{|\phi^\inn (\xi)|^2}d\xi+ \frac{W(r)}{\phi^\inn (r)}\right)=C(\phi^\inn (r_0))'+\Oc( r_0^{-1}).
$$
As $\partial_r \phi^\inn (r_0)=-2r_0^{-3}(1+\Oc( 1))$ we obtain $C=\Oc(r_0^{2})=\Oc( b^{-1}|\log b|^{-1})$. For all $r_0\leq r \leq R_0$ we find from \fref{as:phiinnr0}:
$$
\left|\phi^\inn (r)\int_{R_0}^r \frac{W(\xi)}{|\phi^{\inn}(\xi)|^2}d\xi\right|\lesssim \frac{(r-r_0)r_0^2}{r}\int_{r}^{R_0}\frac{d\xi}{(\xi-r_0)^2\xi}\lesssim 1,
$$
and:
$$
|C\phi^\inn (r)|\lesssim r_0^2\frac{(r-r_0)}{r_0^2r^{-1}}\lesssim 1.
$$
Hence $|\Gamma (r)|\lesssim 1$ for $r_0< r\leq R_0$ as well. This proves \fref{bd:controlegammamodif} for $\Gamma$. The proof for $\partial_r \Gamma$ is verbatim the same so that we skip it.\\

\noindent \textbf{Step 2} \emph{Bound for the resolvant under orthogonality condition}: Let a solution to $\As_{b,\tilde \alpha}u=r^{-1}\pa_r (Vf)$ be given by
$$
u(r) =  \phi^\inn(r) \int_r^{R_0} \frac{\Gamma (\xi)}{W(\xi)} \xi^{-1} \partial_\xi (Vf)(\xi) d\xi + \Gamma (r) \int_0^r \frac{\phi^\inn(\xi)}{W(\xi)}  \xi^{-1} \partial_\xi (Vf)(\xi) d\xi,
$$
then we claim the resolvent bound:
\be \label{bd:solventperturb}
\| u \|_{\Ic_0^{-2}}\lesssim \frac{1}{|\ln b|} \left(\| f \|_{X^{-2}_{1}}+\|r \pa_r f \|_{X^{-2}_{1}}\right).
\ee
We now prove this claim. From the hypothesis on $V$, \fref{estimationphiinnperturbation} and \fref{bd:controlegammamodif}, the first term can be bounded by
\begin{align*}
&\left| \phi^\inn(r) \int_r^{R_0} \frac{\Gamma (\xi)}{W(\xi)} \xi^{-1} \partial_\xi (Vf)(\xi) d\xi \right|  \lesssim  \frac{1}{|\ln b|}r^2\left(\langle r \rangle^{-4}+b\langle r \rangle^{-2} \langle \ln \langle r\rangle \rangle \right)\int_r^{R_0} \left(|\xi|^{-1}|f(\xi)|+|\partial_\xi f(\xi)|\right)d\xi \\
\quad & \lesssim  \frac{\| f \|_{X^{-2}_{1}}+\|r \pa_r f \|_{X^{-2}_{1}}}{|\ln b|}r^2\left(\langle r \rangle^{-4}+b\langle r \rangle^{-2} \langle \ln \langle r\rangle \rangle \right)\int_r^{R_0} \xi \langle \xi \rangle^{-4}\langle \ln \langle \xi \rangle \rangle d\xi \\
\quad & \lesssim  \frac{\| f \|_{X^{-2}_{1}}+\|r \pa_r f \|_{X^{-2}_{1}}}{|\ln b|}r^2\left(\langle r \rangle^{-6}\langle \ln \langle \xi \rangle \rangle+b\langle r \rangle^{-4} \langle \ln \langle r\rangle \rangle^2 \right) \lesssim   \frac{\| f \|_{X^{-2}_{1}}+\|r \pa_r f \|_{X^{-2}_{1}}}{|\ln b|} r^2 \langle r \rangle^{-4}.
\end{align*}
For the second term, we use the decomposition \fref{estimationphiinnperturbation}, the identities \fref{def:psi01} and \fref{id:perturbationwronskian}, the bound \fref{bd:controlegammamodif} and the bounds on $V$ to get
\bee
&& \left| \Gamma^\inn (r) \int_0^r \frac{\phi^\inn(\xi)}{W(\xi)}  \xi^{-1} \partial_\xi (Vf)(\xi) d\xi\right| = \left| \Gamma^\inn (r) \int_0^r  \frac{\langle \xi \rangle^4e^{-b\frac{r^2}{2}}}{\xi^2}\left(\frac{\xi^2}{\langle \xi \rangle^4}+bT_1+\tilde \phi^\inn\right) \partial_\xi (Vf)(\xi) d\xi \right|\\
&= & \left| \Gamma^\inn (r)\left(V(r)f(r)e^{-\frac{br^2}{2}}-\int_0^r b\xi Vf e^{-\frac{b\xi^2}{2}}d\xi +\int_0^r  \frac{\langle \xi \rangle^4e^{-b\frac{r^2}{2}}}{\xi^2}\left(bT_1+\tilde \phi^\inn\right) \partial_\xi (Vf)(\xi) d\xi \right)\right| \\
&\lesssim &\frac{1}{|\ln b|} \left( r^{-2}\langle r \rangle^{-4} |f(r)| +b \int_0^r \xi^3 \langle \xi \rangle^{-4} |f|d\xi +b \int_0^r \xi \langle \xi\rangle^{-2}\langle \ln \langle \xi \rangle \rangle (|f|+\xi|\partial_\xi f|) d\xi \right)\\
&\lesssim &\frac{\| f \|_{X^{-2}_{1}}+\|r \pa_r f \|_{X^{-2}_{1}}}{|\ln b|} \left( r^4 \langle r \rangle^{-8}+b\int_0^r \xi^5 \langle \xi \rangle^{-8}d\xi+b\int_0^r \xi^3 \langle \xi \rangle^{-6}\langle \ln \langle \xi \rangle \rangle d\xi  \right)\\
&\lesssim &\frac{\| f \|_{X^{-2}_{1}}+\|r \pa_r f \|_{X^{-2}_{1}}}{|\ln b|} r^2 \langle r \rangle^{-4}\\
\eee
because $r\lesssim b^{-1}$. Combining the above two bounds yields the following estimate on $[0,R_0]$,
$$
|u(r)|\lesssim \frac{\| f \|_{X^{-2}_{1}}+\|r \pa_r f \|_{X^{-2}_{1}}}{|\ln b|} r^2 \langle r \rangle^{-4}.
$$
Differentiating the identity satisfied by $u$ yields
$$
\partial_r u = \partial_r \phi^\inn(r) \int_r^{R_0} \frac{\Gamma (\xi)}{W(\xi)} \xi^{-1} \partial_\xi (Vf)(\xi) d\xi + \partial_r \Gamma (r) \int_0^r \frac{\phi^\inn(\xi)}{W(\xi)}  \xi^{-1} \partial_\xi (Vf)(\xi) d\xi.
$$
Hence, computing the same way the integral terms as we just did, and using \fref{estimationphiinnperturbation}, \fref{estimationphiinnperturbation2} and \fref{bd:controlegammamodif} we get
$$
|\partial_r u(r)|\lesssim \frac{\| f \|_{X^{-2}_{1}}+\|r \pa_r f \|_{X^{-2}_{1}}}{|\ln b|} r \langle r \rangle^{-4}.
$$
Using the definition of $\As_0$, we write 
$$
\partial_{r}^2 u = \left(\frac 1 r -\frac Qr\right)\partial_r u -\frac{\partial_r Q}{r}u+b(r\partial_r +2(1-n+\tilde \alpha))u+r^{-1}\partial_r (Vf),
$$
from which and the hypotheses on $V$ and the bounds on $u$ and $\partial_r u$, we obtain
$$
|\partial_{r}^2 u|\lesssim \frac{\| f \|_{X^{-2}_{1}}+\|r \pa_r f \|_{X^{-2}_{1}}}{|\ln b|} \langle r \rangle^{-4}.
$$
The bounds on $u$, $\pa_r u$ and $\pa_{r}^2 u$ imply \fref{bd:solventperturb}.\\

\noindent \textbf{Step 3} \emph{Fixed point}: We look for a solution to $\big[\As_0-b(r\pa_r+2(1-n+\tilde \alpha))-r^{-1}\pa_r(V\cdot)\big]\phi^{\inn, V}=0$ under the form
$$
\phi^{\inn,V}=\phi^{\inn}+\tilde \phi^{\inn,V}.
$$
Then, $\tilde \phi^{\inn, V}$ solves
$$
\big[\As_0-br\pa_r+2b(1-n+\tilde \alpha)\big]\tilde \phi^{\inn, V}=r^{-1}\pa_r (V\phi^{\inn})+r^{-1}\pa_r (V\tilde \phi^{\inn,V}).
$$
We solve this using a fixed point argument in $\Ic^{-2}_0$. As $\| \phi^\inn\|_{\Ic^{-2}_1}\lesssim 1$ from Lemma \ref{lemm:inn0}, as $\| \cdot \|_{\Ic^{-2}_1}\lesssim \| \cdot \|_{\Ic^{-2}_0}$ from the very definition of these spaces, the bound \fref{bd:solventperturb} implies
\begin{align*}
&\| \big[\As_0-br\pa_r+2b(1-n+\tilde \alpha)\big]^{-1} (r^{-1}\pa_r (V\tilde \phi^{\inn}))\|_{\mathcal I^{-2}_0}\\
& \qquad  \lesssim \frac{1}{|\ln b|} (\| \phi^{\inn}\|_{X^{-2}_1}+\| r\partial_r \phi^{\inn}\|_{X^{-2}_1})\lesssim \frac{1}{|\ln b|} \| \phi^{\inn}\|_{\Ic^{-2}_1}\lesssim \frac{1}{|\ln b|},
\end{align*}
$$
\left\| \As_0-br\pa_r+2b(1-n+\tilde \alpha)\big]^{-1} (r^{-1}\pa_r (V\tilde \phi^{\inn,V}))\right\|_{\mathcal I^{-2}_0}\lesssim \frac{\| \tilde \phi^{\inn,V}\|_{\Ic^{-2}_1}}{|\ln b|}\lesssim \frac{\| \tilde \phi^{\inn,V}\|_{\Ic^{-2}_0}}{|\ln b|}.
$$
Hence, the mapping which to $\tilde \phi^{\inn, V}$ assigns
$$
\big[\As_0-br\pa_r+2b(1-n+\tilde \alpha)\big]^{-1}\left(r^{-1}\pa_r (V\phi^{\inn})+r^{-1}\pa_r (V\tilde \phi^{\inn,V})\right)
$$
is a contraction in $B_{\Ic^{-2}_0}(0,C|\ln b|^{-1})$ for $C$ large enough and then for $b$ small enough. Its unique fixed point is the desired solution, and satisfies the conclusion of the lemma.
\end{proof}

\subsection{Analysis in the outer zone $r \geq R_0$} 
In this part we solve problem \fref{eq:phi} in the interval $[R_0, \infty)$ where the potential term can be treated as a small perturbation. To this end, we rewrite equation \fref{eq:phi} as 
\begin{equation}\label{eq:Mout}
\partial_r^2 \phi + \frac{3}{r}\partial_r \phi - br\partial_r \phi - \alpha \phi - \frac{4}{r(1 + r^2)}\partial_r \phi + \frac{8}{(1 + r^2)^2}\phi = 0.
\end{equation} 
Introducing the change of variable 
\begin{equation}\label{def:qMout}
\phi^\out(r) = q(z) \quad \textup{with} \;\; z = \frac{b r^2}{2},
\end{equation}
yields the equation satisfied by $q$,
\begin{equation}\label{eq:qout}
\big(\Kc_\theta + P_0\big) q(z) = 0, \quad z \geq z_0=\frac{\zeta_0^2}{2}, \quad \theta = \frac{\alpha}{2b},
\end{equation}
where $\Kc_\theta$ is a Kummer type operator defined by 
\begin{equation}
\Kc_\theta  = z\partial_{z}^2 + (2 - z)\partial_z  - \theta, \label{def:Acnu}
\end{equation}
and $P_0$ is the potential
\begin{equation}
P_0  = -\frac{2b}{(b + 2z)}\partial_z + \frac{4b}{(b+ 2z)^2}.\label{def:P0}
\end{equation}
We will treat the differential operator $P_0$ as a perturbation of $\Kc_\theta$ in the outer zone. We first claim the following.

\begin{lemma}[Properties of $\Kc_\theta$] \label{lemm:proAnu} $\quad$
\begin{itemize}
\item[(i)] \textup{(Inversion)} Assume that $-\theta \not \in \mathbb{N}$, then an explicit inversion of $\Kc_\theta$ is given by 

\begin{equation}\label{def:invAnu}
\Kc_\theta^{-1}f :=  h_\theta(z) \int_{z_0}^{z} \tilde h_\theta(\xi) f(\xi) \xi e^{-\xi} d\xi +  \tilde h_\theta(z) \int_z^{\infty} h_\theta(\xi) f(\xi) \xi e^{-\xi} d\xi,
\end{equation}

where $h_\theta$ and $\tilde h_\theta$ are the two linearly independent solutions to Kummer's equation $\Kc_\theta  h = 0$:
\begin{align}
& h_\theta(z) = \frac{1}{z\Gamma(\theta)} + \frac{1}{\Gamma(\theta - 1)}\sum_{i = 0}^\infty\frac{(\theta)_i}{(2)_i i!} z^i \Big[\ln z + \Psi(\theta + i) - \Psi(1 + i) - \Psi(2+i) \Big],\label{exp:varphi1at0}\\
& \qquad \qquad \tilde h_\theta(z) = \sum_{i = 0}^\infty\frac{(\theta)_i}{(2)_i i!} z^i, \label{exp:varphi0at0}
\end{align}
where $(a)_i = \frac{\Gamma(a + i)}{\Gamma(a)}$, $\Gamma$ is the Gamma function, and $\Psi = \Gamma'/\Gamma$ is the digamma function. Moreover, we have the asymptotic behavior as $z \to \infty$,
\begin{equation} \label{eq:asypvarphi01}
 h_\theta(z) = z^{-\theta}\big(1 + \Oc(z^{-1})\big), \quad \tilde h_\theta(z) = \frac{\Gamma(2)}{\Gamma(\theta)}e^z z^{\theta - 2} \big(1 + \Oc(z^{-1})\big). 
\end{equation}
and for $z_0\leq z\leq 2$, for $C$ dependent of $z_0$ if $n=0$ and independent if $n\geq 1$:
\be \label{eq:asypvarphi022}
|h_\theta (z)|+|\tilde h_\theta (z)|\lesssim C
\ee
\item[(ii)] \textup{(Continuity)} Let $a \in \Rb$, and $\Ec_0^{p,a}$ be the Banach space of functions $f: [z_0, \infty) \to \Rb$ equipped with the norm

\begin{align*}
\|f\|_{\Ec^{a}} &:=  \sup_{z \geq z_0} \langle z\rangle^{-a} \big(|f(z)| + |z\partial_z f(z)|  + |z^2\partial_z^2 f(z)|\big).
\end{align*}

Then for any continuous function $f:[z_0, \infty) \to \Rb$, we have the estimate for $a>-\theta$:
\begin{equation}\label{est:conAnu}
\|\Kc_\theta ^{-1}f\|_{\Ec^{a}} \lesssim C(z_0) \sup_{z_0 \leq  z < \infty} \zj^{-a} |f(z)|.
\end{equation}

\end{itemize}
 
\end{lemma}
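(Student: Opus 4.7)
The operator $\Kc_\theta$ is Kummer's confluent hypergeometric equation with parameters $(a,b) = (\theta, 2)$, so the plan for (i) rests on classical ODE theory. The regular solution $\tilde h_\theta$ in \fref{exp:varphi0at0} is Kummer's $M(\theta,2,z)$; I would verify $\Kc_\theta \tilde h_\theta = 0$ by direct series substitution using the recursions $(\theta)_{i+1} = (\theta+i)(\theta)_i$ and $(2)_{i+1} = (2+i)(2)_i$. Since $b=2 \in \mathbb{N}$, the two indicial roots at $z = 0$ differ by an integer, so Frobenius theory predicts a second solution of the form $c z^{-1} + (\text{analytic}) + d(\log z)\tilde h_\theta$. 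I would construct $h_\theta$ by reduction of order, $h_\theta(z) = \tilde h_\theta(z)\int \tilde h_\theta(\xi)^{-2} \xi^{-2} e^{\xi} d\xi$, expand carefully near $z=0$, and match coefficients to recover \fref{exp:varphi1at0}; the digamma coefficients arise from differentiating $(\theta)_i$ in $\theta$ and from the logarithmic integration at $z=0$. The asymptotics \fref{eq:asypvarphi01} are standard, obtained by steepest descent on the Euler integral representations of $M$ and of Tricomi's $U$. The bound \fref{eq:asypvarphi022} on $[z_0,2]$ is then immediate from absolute convergence of the series; the key observation is that for $n \geq 1$ the parameter $\theta = 1-n+\tilde\alpha$ is close to a non-positive integer, so $1/\Gamma(\theta)$ in the $z^{-1}$ coefficient of $h_\theta$ is small and the constant is uniform, while for $n=0$ the bound $\lesssim 1/z_0$ reflects the unavoidable $z^{-1}$ singularity.

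For the inversion formula, the Wronskian $W = h_\theta \tilde h_\theta' - h_\theta' \tilde h_\theta$ satisfies $W' + \frac{2-z}{z} W = 0$, so $W(z) = C z^{-2} e^{z}$; matching leading orders from the series expansions \fref{exp:varphi1at0}--\fref{exp:varphi0at0} pins down $C = 1/\Gamma(\theta)$. A particular solution of $\Kc_\theta u = f$ is then $u = -h_\theta \int \tilde h_\theta f/(\xi W) d\xi + \tilde h_\theta \int h_\theta f/(\xi W) d\xi$, which reduces to \fref{def:invAnu} after the multiplicative $\Gamma(\theta)$ is absorbed into the normalization of $h_\theta$ (note that the leading coefficient of $h_\theta$ already carries $1/\Gamma(\theta)$). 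The boundary points are chosen so the two integrals converge and so the resulting particular solution is admissible for the later matching: at $z_0$ the integrand $\tilde h_\theta f \xi e^{-\xi}$ is locally bounded, while at $+\infty$ the integrand $h_\theta(\xi) f(\xi) \xi e^{-\xi} \sim \xi^{a-\theta+1}e^{-\xi}$ decays exponentially.

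For (ii), assume $\sup_z \langle z\rangle^{-a}|f(z)| \leq 1$ and estimate each piece of \fref{def:invAnu}. Using \fref{eq:asypvarphi01} for $\xi \gtrsim 1$ and \fref{eq:asypvarphi022} for $\xi \in [z_0, 2]$, one has $|\tilde h_\theta(\xi) f(\xi) \xi e^{-\xi}| \lesssim \langle \xi\rangle^{\theta + a - 1}$, which integrated from $z_0$ to $z$ is $\lesssim \langle z\rangle^{\theta + a}$ (the condition $a > -\theta$ being used precisely here to ensure that the upper endpoint dominates); multiplication by $|h_\theta(z)| \lesssim \langle z\rangle^{-\theta}$ produces $\lesssim \langle z\rangle^a$. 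For the second integral, $|h_\theta(\xi) f(\xi) \xi e^{-\xi}| \lesssim \langle \xi\rangle^{a - \theta + 1} e^{-\xi}$; Laplace's method yields $\int_z^\infty \lesssim \langle z\rangle^{a-\theta+1} e^{-z}$, and multiplication by $|\tilde h_\theta(z)| \lesssim \langle z\rangle^{\theta-2} e^z$ gives $\lesssim \langle z\rangle^{a-1}$, dominated by $\langle z\rangle^a$. The derivative bounds $|z\partial_z (\Kc_\theta^{-1}f)|$ and $|z^2 \partial_z^2 (\Kc_\theta^{-1}f)|$ follow analogously by differentiating \fref{def:invAnu}: the boundary terms from Leibniz cancel by the variation-of-parameters identity $c_1' h_\theta + c_2' \tilde h_\theta = 0$, and the resulting integrals obey analogous bounds using $|h_\theta'(z)| \lesssim \langle z\rangle^{-\theta-1}$ and $|\tilde h_\theta'(z)| \lesssim e^z \langle z\rangle^{\theta-2}$. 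The second derivative can alternatively be recovered algebraically from $\Kc_\theta u = f$.

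The main obstacle I anticipate is the Frobenius construction of $h_\theta$ with the precise digamma coefficients of \fref{exp:varphi1at0} and the careful bookkeeping of the $\Gamma(\theta)$ normalization throughout, because $b=2 \in \mathbb{N}$ is the resonant case where the second Kummer solution has a delicate log structure, and the conventions of the lemma absorb factors of $\Gamma(\theta)$ into $h_\theta$ itself. Uniformity of constants as $\theta$ approaches the non-positive integers $0,-1,-2,\ldots$---which is the regime of interest since $\theta = 1-n+\tilde\alpha$---requires verifying that the $\Gamma(\theta)^{-1}$ prefactor of the singular term of $h_\theta$ vanishes at exactly the poles of the prefactor of $\tilde h_\theta$'s asymptotic, so that everything stays finite except for the genuine pole behavior that is explicitly excluded by the hypothesis $-\theta \notin \mathbb{N}$.
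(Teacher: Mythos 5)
Your proposal is correct and follows essentially the same path as the paper: identify $\tilde h_\theta = M(\theta,2,\cdot)$ and $h_\theta = U(\theta,2,\cdot)$ as the Kummer solutions, compute the Wronskian, write down the variation-of-parameters kernel, and then estimate by splitting $[z_0,\infty)$ into $[z_0,2]$ and $[2,\infty)$ using the small-$z$ series and large-$z$ asymptotics, with the derivative bounds recovered either by differentiating under the integral or algebraically from $\Kc_\theta u = f$. The only genuine difference is stylistic: the paper simply cites formulas 13.1.2, 13.1.6 and 13.1.22 of Abramowitz--Stegun for $h_\theta$, $\tilde h_\theta$ and the Wronskian, whereas you reconstruct them from scratch via series substitution and reduction of order; the paper's proof of the bound \fref{eq:asypvarphi022} also reduces, as yours does, to the estimate $\Gamma(\theta)=\Oc(|\ln b|)$ for $n\geq 1$ and $\Gamma(\theta)=\Oc(1)$ for $n=0$. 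One small note: the $\Gamma(\theta)$ factor you flag in the variation-of-parameters formula multiplies \emph{both} terms, not just the coefficient of $h_\theta$, so it cannot strictly be ``absorbed into the normalization of $h_\theta$''; it is a global scalar (of size $\Oc(|\ln b|)$ for $n\geq 1$) by which \fref{def:invAnu} differs from a literal right inverse of $\Kc_\theta$. This does not affect the continuity estimate or the fixed-point constructions, which tolerate a fixed nonzero scalar, but the phrasing should be cleaned up.
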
 
\begin{proof} $(i)$ See formulas 13.1.2, 13.1.6 and 13.1.22 in \cite{ASbook92} for the definition of $h_\theta$, $\tilde h_\theta$ and the Wronskian $W(h_\theta, \tilde h_\theta)$ respectively. For the bound for $z_0\leq z \leq 2$, notice that from the Gamma function's recurrence relation and the bound on $\bar \alpha$:
\begin{align} 
\Gamma(\theta)=\frac{\Gamma (\theta+n)}{\theta (\theta+1)...(\theta+n-1)}&=\frac{\Gamma (1+\tilde \alpha)}{(1-n+\tilde \alpha) (2-n+\tilde \alpha)...(-1+\tilde \alpha)\tilde \alpha} \nonumber \\
& \quad \sim \frac{(-1)^n}{(n-1)!\tilde \alpha}+\Oc( 1)=\Oc( |\ln b|), \label{id:asymptotiquegammafunction}
\end{align}
for $n\geq 1$, and $\Gamma (\theta)=\Gamma (1+\tilde \alpha)=\Oc( 1)$ for $n=0$.

$(ii)$ The proof follows from straightforward computations. Let 
$$D =  \sup_{z_0 \leq z <\infty}  \zj^{-a}|f(z)|,$$
From \fref{eq:asypvarphi01}, we compute for $z \geq 2$, 
\begin{align*}
\left|\tilde h_\theta (z)\int_z^{\infty} f  h_\theta  \xi e^{-\xi}d\xi\right| \lesssim Dz^{\theta-2}e^z\int_z^{\infty} \xi^a \xi^{-\theta} \xi e^{-\xi}d\xi \lesssim D z^{a-1},
\end{align*}
and from \fref{exp:varphi1at0}, we compute for $z \in [z_0, 2]$,
\begin{align*}
\left|\tilde h_\theta (z)\int_z^{\infty} f  h_\theta  \xi e^{-\xi}d\xi\right| &\lesssim  \left|\tilde h_\theta(z)\int_z^2 f  h_\theta  \xi e^{-\xi}d\xi + \tilde h_\theta (z)\int_2^{\infty} f  h_\theta  \xi e^{-\xi}d\xi\right|\\
& \lesssim  D \int_z^2 \xi  d\xi + D\int_2^{\infty}\xi^{a-\theta+1}e^{-\xi}d\xi \lesssim D.
\end{align*}
Similarly, we have for $z \geq 2$, as $a>-\theta$
\begin{align*}
\left|h_\theta(z)\int_{z_0}^{z} f \tilde  h_\theta  \xi e^{-\xi}d\xi\right| &\lesssim Dz^{-\theta} \int_{z_0}^2 \xi d\xi  + Dz^{-\theta} \int_2^z \xi^a \xi^{\theta - 2}\xi^1 d\xi \lesssim D z^{a}. 
\end{align*}
and for $z_0 \leq  z \leq 2$,
$$\left|\int_{z_0}^{z} f \tilde h_\theta  \xi e^{-\xi}d\xi\right| \lesssim D\int_{z_0}^z \xi d\xi \lesssim D.$$
This proves the continuity bound \fref{est:conAnu} for $\Kc_\theta^{-1}f$. We now take derivatives. For $z \geq 2$, we estimate from \fref{def:invAnu}, \fref{eq:asypvarphi01}:
\begin{align*}
\Big| z\partial_z \Kc_\theta ^{-1}f(z)\Big| &\lesssim \left| z\partial_z \tilde  h_\theta \int_z^{\infty} f  h_\theta  \xi e^{-\xi} d\xi\right| + \left| z\partial_z  h_\theta  \int_{z_0}^z f \tilde  h_\theta  \xi e^{-\xi} d\xi\right|\\
& \lesssim D \left(e^z z^{\theta - 1} z^{a -\theta + 1}e^{-z} + z^{-\theta} z^{a + \theta }\right) \lesssim Dz^{a}. 
\end{align*}
For $z \in [z_0, 2]$, we estimate from \fref{exp:varphi0at0}:
\begin{align*}
\Big| z\partial_z \Kc_\theta ^{-1}f(z)\Big| &\lesssim \left| z\partial_z \tilde  h_\theta \int_z^{\infty} f  h_\theta  \xi e^{-\xi} d\xi\right| + \left| z\partial_z  h_\theta  \int_{z_0}^z f \tilde  h_\theta  \xi e^{-\xi} d\xi\right|\\
& \lesssim D  \big( \int_z^2 \xi d \xi+\int_2^\infty \xi^{a-\theta+1}e^{-\xi}d\xi\big) +D\int_{z_0}^z \xi d \xi \lesssim D. 
\end{align*}
Using $\Kc_\theta  \Kc_\theta^{-1} f = f$ and the definition of $\Kc_\theta $, we have the estimate for $z \geq 2$, 
\begin{align*}
|z\partial_z^2 \Kc_\theta ^{-1} f(z)| \lesssim |z\partial_z\Kc^{-1}_\theta f(z)| + |\Kc_\theta ^{-1}f(z)| \lesssim Dz^{a},
\end{align*}
and for $z \in [z_0, 2]$, 
\begin{align*}
|z^2 \partial_z^2 \Kc_\theta ^{-1}f(z)| \lesssim |z\partial_z \Kc_\theta ^{-1}f(z)| + |z\Kc_\theta ^{-1}f(z)| \lesssim D .
\end{align*}
Collecting the above estimates yields the estimate \fref{est:conAnu}.
This concludes the proof of Lemma \ref{lemm:proAnu}.
\end{proof}

We are now in the position of computing the solution $q$ to equation \fref{eq:qout} by a perturbation argument.

\begin{lemma}[Outer eigenfunctions for the radial mode] \label{lemm:out0}  Fix $n\in \mathbb N$, and $\theta=1-n+1/\ln b+\bar \alpha$. For $0 < \zeta_0 \ll 1$ and any small $0<\delta\ll1$, there exist $b^* > 0$ such that for all $0 < b \leq b^*$, for all $\bar \alpha=\Oc( |\ln b|^{-2})$ there exists a smooth solution 
\be \label{eq:qn}
q(b,\bar \alpha,z)=\Gamma (\theta)h_\theta(z)+\Gc(b,\bar \alpha,z)
\ee
to \fref{eq:qout} on $[z_0, \infty)$, where $h_\theta$ is introduced in Lemma \ref{lemm:proAnu} and $\Gc$ satisfies the following estimates for some universal $C>0$:
\begin{equation}\label{est:Gc}
\|\Gc\|_{\Ec^{-\theta+\delta}} \lesssim b|\ln b|^C, \quad \| b \partial_b \Gc\|_{\Ec^{ -\theta+\delta}} \lesssim  b|\ln b|^C, \quad \|\partial_{\theta} \Gc\|_{\Ec_0^{-\theta+\delta}} \lesssim b|\ln b|^C.
\end{equation}
where the constants in the estimates depend on $z_0$. Finally, on the interval $[z_0,\infty)$, $q$ does not vanish for $n=0,1$, while for $n\geq 2$ it possesses $n-1$ zeros.

\end{lemma}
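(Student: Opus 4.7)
The plan is to set up the ansatz $q = \Gamma(\theta)h_\theta + \Gc$ and solve a fixed point equation for the correction $\Gc$. Since $\Kc_\theta h_\theta = 0$, the equation $(\Kc_\theta + P_0)q = 0$ reduces to
\[
\Gc = -\Kc_\theta^{-1}\bigl[P_0(\Gamma(\theta)h_\theta)\bigr] - \Kc_\theta^{-1}\bigl[P_0\Gc\bigr] =: F(\Gc),
\]
using the inversion from Lemma \ref{lemm:proAnu}, which I would solve by Banach fixed point in $\Ec^{-\theta+\delta}$. The smallness comes from the overall factor $b$ in $P_0 = -2b(b+2z)^{-1}\partial_z + 4b(b+2z)^{-2}$; since $z_0$ is fixed independently of $b$, on $[z_0,\infty)$ the coefficients of $P_0$ are bounded by $Cb$.

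First I would estimate the source. From \eqref{eq:asypvarphi01}--\eqref{eq:asypvarphi022} we have $|h_\theta(z)| + \langle z\rangle |\partial_z h_\theta(z)| \lesssim \langle z\rangle^{-\theta}$ on $[z_0,\infty)$, while \eqref{id:asymptotiquegammafunction} gives $|\Gamma(\theta)|\lesssim |\ln b|$. A direct computation then yields $|P_0(\Gamma(\theta)h_\theta)(z)|\lesssim b|\ln b|\langle z\rangle^{-\theta-2}$. Applying the continuity estimate \eqref{est:conAnu} with $a = -\theta + \delta > -\theta$ gives $\|\Kc_\theta^{-1}[P_0(\Gamma(\theta)h_\theta)]\|_{\Ec^{-\theta+\delta}}\lesssim b|\ln b|^C$. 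Similarly $\Kc_\theta^{-1}\circ P_0$ is bounded on $\Ec^{-\theta+\delta}$ with operator norm $\lesssim b$, so $F$ is a contraction on the ball $B_{\Ec^{-\theta+\delta}}(0, Cb|\ln b|^C)$ for $b$ sufficiently small, producing the desired $\Gc$.

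The bounds on $b\partial_b \Gc$ and $\partial_\theta \Gc$ follow by differentiating the fixed point identity: $\partial \Gc = DF(\partial\Gc) + (\partial F)(\Gc)$. Since $DF = -\Kc_\theta^{-1}\circ P_0$ has small norm, $I - DF$ is invertible, and each bound reduces to estimating $(\partial_b F)(\Gc)$ and $(\partial_\theta F)(\Gc)$. These require controlling the derivatives of $h_\theta$, $\tilde h_\theta$, $\Gamma(\theta)$ and $P_0$ with respect to $b$ and $\theta$; each such derivative loses at most a factor $|\ln b|^C$, thanks to the explicit series expansions \eqref{exp:varphi1at0}--\eqref{exp:varphi0at0} and the Gamma recurrence.

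For the zero count, the asymptotic $\Gamma(\theta)h_\theta(z) = \Gamma(\theta)z^{-\theta}(1+\Oc(z^{-1}))$ combined with $|\Gc(z)|\lesssim b|\ln b|^C\langle z\rangle^{-\theta+\delta}$ confines all zeros of $q$ to a bounded region $[z_0, Z]$ independent of $b$. On that region, as $b,\bar\alpha\to 0$, $q$ converges in $C^1$ to the unperturbed solution $\Gamma(1-n)h_{1-n}$, which by \eqref{exp:varphi1at0} (after cancellation of the simultaneous poles of $\Gamma(\theta)$ and $\Gamma(\theta-1)$ with the vanishing of $(\theta)_i$ for $i\geq n$) reduces to a nonzero multiple of the generalized Laguerre polynomial $L_{n-1}^{(1)}$ of degree $n-1$. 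Since $L_{n-1}^{(1)}$ has exactly $n-1$ simple positive zeros for $n\geq 2$ and no zeros for $n\in\{0,1\}$, continuity in parameters and the $C^1$ convergence yield the stated zero count. The main obstacle I anticipate is tracking the $|\ln b|$-losses from the Gamma function residues near $\theta = 1-n$: the factors $\Gamma(\theta)$ and $h_\theta$ individually blow up or vanish there, and only their product is well-behaved, so this cancellation has to be monitored carefully when differentiating in $b$ and $\theta$.
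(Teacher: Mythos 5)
Your fixed-point construction, the estimate of the source term, and the scheme for the $\partial_b$, $\partial_\theta$ derivatives are all essentially the same as the paper's (the paper works with the unnormalised ansatz $q = h_\theta + \Gc$ and multiplies by $\Gamma(\theta)$ only at the end, which saves the $|\ln b|$ loss but is otherwise equivalent). The zero-count argument, however, has a genuine gap. You claim that $q = \Gamma(\theta)h_\theta + \Gc$ converges in $C^1$ on compacts to $\Gamma(1-n)h_{1-n}$ as $b,\bar\alpha\to 0$. This is false for $n\geq 1$: while the poles of $\Gamma(\theta-1)^{-1}$ and $\Psi(\theta+i)$ do cancel inside the definition \eqref{exp:varphi1at0} (so that $h_\theta$ itself has a finite, nonzero limit as $\theta\to 1-n$, equal, up to a nonzero constant, to $L_{n-1}^{(1)}$), the prefactor $\Gamma(\theta)$ has a simple pole at $\theta = 1-n$. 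Hence $\Gamma(\theta)h_\theta$ diverges like $1/\tilde\alpha\sim\ln b$; this is exactly what the paper's expansion \eqref{idhthetaext1} records, with the leading term $-\tfrac{1}{\tilde\alpha}\sum_{i=1}^n 2^{i-1}c_{n,i}\hat d_i z^{i-1}$.

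The conclusion you are after is still correct and not far off: since $\Gamma(\theta)$ is a (nonzero) constant in $z$, the zeros of $q$ coincide with those of $h_\theta + \Gamma(\theta)^{-1}\Gc$, and it is \emph{this} renormalised quantity (not $q$ itself) that converges to a nonzero multiple of $L_{n-1}^{(1)}$ as $b,\bar\alpha\to 0$. If you rewrite your limiting argument for $h_\theta + \Gamma(\theta)^{-1}\Gc$, the $C^1$ convergence, the Laguerre zero count, and the perturbation step all go through. The paper takes the slightly more static route of quoting directly that $h_\theta$ has $n-1$ positive zeros for $\theta$ slightly below $1-n$ and then perturbing; either route is fine once one works with $h_\theta$ rather than $\Gamma(\theta)h_\theta$, and you should also note that the claim that the zero set is confined to a $b$-independent region $[z_0,Z]$ needs a short separate justification (the paper uses $z\partial_z q = -\theta z^{-\theta}(1+\Oc(b))\neq 0$ on $[z_*,\infty)$ to rule out sign changes at infinity).
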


\begin{lemma} \label{lem:perturbationouter}
Assume $P_0$ is replaced by $P_0(q)+\frac 12 \partial_z (\tilde V q)/z$ where $\tilde V$ satisfies $|\tilde V|+|z\partial_z \tilde V|\lesssim b|\ln b|^{-1}z^{-1}$ on $[z_0,\infty)$. Then existence result of Lemma \ref{lemm:out0} of a solution $q^V=\Gamma (\theta)h_\theta(z)+\Gc^V(b,\bar \alpha,z)$ and the first bound in \fref{est:Gc} still hold true.
\end{lemma}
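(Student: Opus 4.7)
The plan is to repeat the fixed-point construction of Lemma \ref{lemm:out0} verbatim, treating $\tilde P(q) := \frac{1}{2z}\partial_z(\tilde V q)$ as an additional small perturbation of $\Kc_\theta$ alongside $P_0$. I seek a solution of the form $q^V = \Gamma(\theta)h_\theta + \Gc^V$, and since $\Kc_\theta h_\theta = 0$, the correction $\Gc^V$ satisfies the integral equation
$$\Gc^V = -\Kc_\theta^{-1}\big[(P_0 + \tilde P)(\Gamma(\theta)h_\theta + \Gc^V)\big],$$
which I solve by the Banach fixed-point theorem in a ball of $\Ec^{-\theta+\delta}$ of radius $Cb|\ln b|^C$. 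All estimates for the $P_0$ contributions are inherited directly from Lemma \ref{lemm:out0}, so the only new ingredient is to control $\tilde P$ through $\Kc_\theta^{-1}$.

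The key step is the operator bound for $\Kc_\theta^{-1}\tilde P$ on $\Ec^{-\theta+\delta}$. For $f \in \Ec^{-\theta+\delta}$ I expand
$$\tilde P f(z) = \frac{\partial_z \tilde V(z)}{2z} f(z) + \frac{\tilde V(z)}{2z}\partial_z f(z),$$
and combine the hypothesis $|\tilde V(z)| + |z\partial_z \tilde V(z)| \lesssim b|\ln b|^{-1} z^{-1}$ with the elementary pointwise bounds $|f(z)| + |z\partial_z f(z)| \lesssim \langle z\rangle^{-\theta+\delta}\|f\|_{\Ec^{-\theta+\delta}}$ implied by the definition of the norm. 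This yields the pointwise estimate
$$|\tilde P f(z)| \lesssim \frac{b}{|\ln b|}\, z^{-3}\langle z\rangle^{-\theta+\delta}\,\|f\|_{\Ec^{-\theta+\delta}}.$$
Since $z \geq z_0 > 0$ on the outer zone, the factor $z^{-3}$ is harmlessly bounded by a constant depending only on $z_0$, and the continuity estimate \fref{est:conAnu} applied with $a = -\theta + \delta > -\theta$ gives
$$\|\Kc_\theta^{-1}\tilde P f\|_{\Ec^{-\theta+\delta}} \lesssim C(z_0)\,\frac{b}{|\ln b|}\,\|f\|_{\Ec^{-\theta+\delta}}.$$

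From this the fixed-point argument closes easily. The linear part $\Gc^V \mapsto -\Kc_\theta^{-1}\tilde P \Gc^V$ is an arbitrarily small contraction for $b$ small, while the source term satisfies $\|\Kc_\theta^{-1}\tilde P(\Gamma(\theta)h_\theta)\|_{\Ec^{-\theta+\delta}} \lesssim b|\ln b|^{C}$ thanks to the asymptotic $|\Gamma(\theta)| \lesssim |\ln b|$ from \fref{id:asymptotiquegammafunction} together with $h_\theta \in \Ec^{-\theta}$. Combined with the already established $P_0$ bounds, the full map is a contraction on $B_{\Ec^{-\theta+\delta}}(0, Cb|\ln b|^{C})$; its unique fixed point provides $\Gc^V$ and the first bound of \fref{est:Gc}.

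The main point that deserves care is verifying that the specific algebraic form $\tilde P = \frac{1}{2z}\partial_z(\tilde V \cdot)$ actually produces smallness in $\Ec^{-\theta+\delta}$: the explicit $z^{-1}$ outside the derivative, together with $\tilde V \sim b|\ln b|^{-1}z^{-1}$, delivers the crucial $z^{-3}$ decay that makes $\tilde P$ a genuine small perturbation on the outer zone. A less favorable structure (e.g. a multiplicative potential of size $b|\ln b|^{-1}$ without extra decay) would still be small on $[z_0,\infty)$ but would require more care when multiplied by $\Gamma(\theta) h_\theta$, where the growing factor $|\Gamma(\theta)| \sim |\ln b|$ could otherwise compete with the gain.
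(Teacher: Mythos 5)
Your proof is correct and follows essentially the same route as the paper: the paper's own argument is a one-line observation that $P_0 + \tfrac{1}{2z}\partial_z(\tilde V\,\cdot)$ retains the decomposition form $V_1 q + V_2\partial_z q$ with coefficient bounds of the same (in fact better) type as in \eqref{estimationP0}, so the fixed-point construction of Lemma \ref{lemm:out0} applies verbatim. You have simply unpacked that observation into the explicit operator estimate $\|\Kc_\theta^{-1}\tilde P\|_{\mathcal L(\Ec^{-\theta+\delta})} \lesssim C(z_0)\, b|\ln b|^{-1}$ and rerun the contraction, which is the same argument spelled out in detail.
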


\begin{proof}[Proof of Lemma \ref{lemm:out0}] From the bound on the Gamma function \fref{id:asymptotiquegammafunction}, we will simply consider a solution of the form $q(z)=h_\theta(z)+\Gc(b,\bar \alpha,z)$ (with the abuse of notation of keeping the notation $\Gc$), and prove the estimate \fref{est:Gc} for $\Gc(b,\bar \alpha,z)$, which will prove the Lemma upon multiplication by $\Gamma (\theta)$.\\

\noindent \textbf{Step 1} \emph{Existence and bounds}: Note that $P_0$ has the form:
\be \label{estimationP0}
P_0(q)= V_1q+V_2\partial_z q, \quad \mbox{with } |V_1|+|zV_1|\lesssim bz^{-1}.
\ee
Let us write from \fref{eq:qn} the equation satisfied by $\Gc$,
\begin{align*}
\Kc_\theta  \Gc +  P_0 \Gc + P_0 h_\theta  = 0.
\end{align*} 
Let $\Gamma$ the affine mapping defined as 
$$\Gamma(f) = -\Kc_\theta ^{-1}\big[P_0 f +  P_0 h_\theta \big] \equiv D\Gamma(f) + D\Gamma( h_\theta ),$$
where 
$$D\Gamma(f) = -\Kc_\theta ^{-1}\big[P_0f\big],$$
and $\Kc_\theta ^{-1}$ is given by \fref{def:invAnu}. We estimate from the definition \fref{def:P0} of $P_0$ and \fref{est:conAnu}, 
\begin{align*}
\|D\Gamma( h_\theta )\|_{\Ec^{ - \theta+\delta}} \lesssim \| \Kc_\theta ^{-1}P_0  h_\theta \|_{\Ec^{-\theta+\delta}} \lesssim \sup_{z \in [z_0, \infty)}\zj^{\theta-\delta}|P_0  h_\theta | \lesssim b.
\end{align*}
From \fref{est:conAnu}, we estimate for all $f \in \Ec^{0, -\theta+\delta}$,
\begin{equation}\label{est:DGammaEc0nu}
\|D\Gamma(f)\|_{\Ec^{-\theta+\delta}} \lesssim \sup_{z \in [z_0, \infty)}\zj^{\theta-\delta}|P_0f(z)| \lesssim  b \|f\|_{\Ec^{-\theta+\delta}}.
\end{equation}
It follows that $\Gamma$ is a contraction mapping on $B_{\Ec^{-\theta+\delta}}(0, M b)$ for some $M=M(\zeta_0)>0$ large enough. Hence, there exists a unique fixed point $\Gc$ with
$$\Gc = \Gamma(\Gc) \quad \textup{with} \quad \|\Gc\|_{\Ec^{-\theta+\delta}} \lesssim b.$$ 
Differentiating the above fixed point relation yields:
\begin{align*}
\partial_\theta \Gc = D\Gamma(\partial_\theta \Gc) + (\partial_\theta \Gamma) (\Gc), \quad \partial_b \Gc = D\Gamma(\partial_b \Gc) + (\partial_b \Gamma) (\Gc).
\end{align*}
Since $P_0$ depends on $b$ and not on $\theta$, whereas $h_\theta$, $h'_\theta$ and $\mathcal K_\theta$ depend on $\theta$ and not on $b$, we have the identities:
\begin{align*}
(\partial_\theta \Gamma)(\Gc) =-\partial_\theta ( \Kc_\theta ^{-1})(P_0(\Gc+h_\theta))-\Kc^{-1}(P_0\partial_\theta h_\theta) , \quad (\partial_b \Gamma)(\Gc) = -\Kc_\theta ^{-1}(\partial_b P_0 (\Gc+h_\theta)). 
\end{align*}
We compute from \fref{exp:varphi1at0} that:
\bee
\partial_\theta h_\theta(z) &=& -\frac{\Psi (\theta)}{z\Gamma(\theta)} - \frac{\Psi (\theta-1)}{\Gamma(\theta - 1)}\sum_{i = 0}^\infty\frac{(\theta)_i}{(2)_i i!} z^i \Big[\ln z + \Psi(\theta + i) - \Psi(1 + i) - \Psi(2+i) \Big]\\
&&+\frac{1}{\Gamma(\theta - 1)}\sum_{i = 0}^\infty\frac{(\theta)_i}{(2)_i i!} z^i \Big[\ln z + \Psi'(\theta + i) - \Psi(1 + i) - \Psi(2+i) \Big]\\
&&+\frac{1}{\Gamma(\theta - 1)}\sum_{i = 0}^\infty\frac{(\theta)_i(\Psi(\theta+i)-\Psi(\theta))}{(2)_i i!} z^i \Big[\ln z + \Psi(\theta + i) - \Psi(1 + i) - \Psi(2+i) \Big].
\eee
Hence, we infer from $|\Psi (\theta)|+|\Psi (\theta-1)|\lesssim |\tilde \alpha|^{-1}\lesssim |\ln b|$, $|\Psi'(\theta+i)|\lesssim |\tilde \alpha|^{-2}\lesssim |\ln b|^2$ and $|(\theta)_i(\Psi(\theta+i)-\Psi(\theta))|\lesssim 1$ the rough upper bound on $[z_0,\infty)$:
$$
|\partial_\theta h_\theta (z)|\lesssim |\ln b|^2 z^{-1} \ln \langle z \rangle \langle z\rangle^{-\theta},
$$
which extends to derivatives. Similarly, we have from \fref{exp:varphi0at0}
$$
\partial_\theta \tilde h_\theta (z)= \sum_{i = 0}^\infty\frac{(\theta)_i(\Psi (\theta+i)-\Psi (\theta))}{(2)_i i!} z^i,
$$
satisfies the rough upper bound $|\partial_\theta \tilde h_\theta (z)|\lesssim \ln \langle z \rangle \langle z \rangle^{\theta-2}e^{z}$ on $[z_0,\infty)$. We get from \fref{def:invAnu}:
\bee
(\partial_\theta \Kc_\theta^{-1})f  & = &  (\partial_\theta h_\theta)(z) \int_{z_0}^{z} \tilde h_\theta(\xi) f(\xi) \xi^2e^{-\xi} d\xi +h_\theta(z) \int_{z_0}^{z} \partial_\theta \tilde h_\theta(\xi) f(\xi) \xi^2e^{-\xi} d\xi \\
&& +  \partial_\theta (\tilde h_\theta)(z) \int_z^{\infty} h_\theta(\xi) f(\xi) \xi^2e^{-\xi} d\xi+  \tilde h_\theta(z) \int_z^{\infty} \partial_\theta(h_\theta)(\xi) f(\xi) \xi^2e^{-\xi} d\xi.
\eee
Hence, as from the above, the bounds for $h_\theta$ and $\tilde h_\theta$ still hold up to a logarithmic loss in $z$ and $b$ and $\delta>0$, using the same argument as in the proof of Lemma \ref{lemm:proAnu} we get:
$$
\| (\partial_\theta \Kc_\theta ^{-1})f\|_{\Ec^{-\theta+\delta}} \lesssim |\ln b|^2 \sup_{z_0 \leq  z < \infty} \zj^{\theta-\frac \delta 2 } |f(z)|
$$
and from \fref{est:conAnu}:

$$
\| \Kc^{-1}(P_0\partial_\theta h_\theta) \|_{\Ec^{-\theta+\delta}}\lesssim \| P_0\partial_\theta h_\theta \|_{\Ec^{-\theta+\delta}} \lesssim b|\ln b|^2.
$$

Thus, as $\delta$ is small, from the definition of $P_0$:
$$
\| \partial_\theta ( \Kc_\theta ^{-1})(P_0(\Gc+h_\theta))\|_{\Ec^{-\theta+\delta}}\lesssim |\ln b|^2\| P_0(\Gc+h_\theta) \|_{\Ec^{-\theta+\frac \delta 2}}\lesssim b|\ln b|^2 \| \Gc+h_\theta \|_{\Ec^{-\theta+\delta}}\lesssim b|\ln b|^2.
$$
We proved above the continuity bound $\| D\Gamma\|_{C(\Ec^{-\theta+\delta})}\lesssim b$ and the identity,
$$
\partial_\theta \Gc = D\Gamma(\partial_\theta \Gc) -\partial_\theta ( \Kc_\theta ^{-1})(P_0(\Gc+h_\theta))-\Kc^{-1}(P_0\partial_\theta h_\theta).
$$
Hence one can invert the operator $Id-D\Gamma$ for $b$ small enough, with $\|Id+D\Gamma\|_{C(\Ec^{-\theta+\delta})}\lesssim 1$ and the above identity gives:
$$
\| \partial_\theta \Gc \|_{\Ec^{-\theta+\delta}}=\|(Id-D\Gamma)^{-1}\left(\partial_\theta ( \Kc_\theta ^{-1})(P_0(\Gc+h_\theta))+\Kc^{-1}(P_0\partial_\theta h_\theta)\right)\|_{\Ec^{-\theta+\delta}}\lesssim b|\ln b|^2.
$$
From the definition of $P_0$ and \fref{est:conAnu} we find:
$$
\| \Kc_\theta ^{-1}(\partial_b P_0 (\Gc+h_\theta))\|_{\Ec^{-\theta+\delta}}\lesssim \| \partial_b P_0 (\Gc+h_\theta)\|_{\Ec^{-\theta+\delta}} \lesssim  \| \Gc+h_\theta \|_{\Ec^{-\theta+\delta}}\lesssim 1.
$$
Hence we obtain similarly from the relation $\partial_b \Gc = D\Gamma(\partial_b \Gc) -\Kc_\theta ^{-1}(\partial_b P_0 (\Gc+h_\theta))$ the bound:
$$
\| \partial_b \Gc \|_{\Ec^{-\theta+\delta}}\lesssim \| (Id-D\Gamma)^{-1}\Kc_\theta ^{-1}(\partial_b P_0 (\Gc+h_\theta))\|_{\Ec^{-\theta+\delta}}\lesssim 1.
$$

\noindent \textbf{Step 2} \emph{Number of zeros}: This is a consequence of the well-known properties of Kummer's function $h_\theta$ (see\cite{ASbook92}). Since $\theta = 1 - n + \frac{1}{\ln b} + \Oc\left( |\ln b|^{-2}\right)$, $h_\theta$ has no positive zeros for $n = 0$ and possesses $ \lfloor \theta \rfloor = n - 1$ zeros on the interval $(0, +\infty)$. The estimate \eqref{est:Gc} and the asymptotic behavior \eqref{eq:asypvarphi01} ensure that there exists $z_* > 0$ such that  $|q(z)| \ne 0$ and $z \partial_z q(z) = -\theta z^{-\theta} (1 + \Oc(b)) \ne 0$  for $z \geq z_*$. Thus, $q(z)$ does not change sign for $z \geq z_*$.   It remains to show that on the interval $z \in (z_0, z_*)$, $q$  has the same number of zeros than $h_\theta$. We consider two cases. \\
- If $|h_\theta(z)| \geq c_0$ for all $z \in (z_0, z_*)$ for some $c_0 > 0$, then the estimate \eqref{est:Gc} implies that  $|q(z)| > 0$ as well on $(z_0, z_*)$ for $b$ sufficiently small. \\
- If $h_\theta(z)$ has $n-1$ zeros on $(z_0, z_*)$, say $h(z_1) = h (z_2)  = \cdots = h( z_{n - 1}) = 0$ with $z_1  < z_2 < \cdots < z_{n - 1}$. By definition, we have $|h_\theta(z)| \geq \delta_0$ on  $(z_0, z_*) \setminus \cup_{j = 1}^{n-1}B_{z_j}(\epsilon_0)$ for a fixed small constant $0 <\epsilon_0 \ll 1$ and $\delta_0 = \delta_0(\epsilon_0) > 0$. Using  \eqref{est:Gc} yields $|q(z)| > 0$ for $z \in (z_0, z_*) \setminus \cup_{j = 1}^{n-1}B_{\epsilon_0}(z_j)$ for $b$ small enough. Consider $z_j$ a zero of $h_\theta$, namely that $h_\theta(z_j) = 0$. Since $h_\theta$ is a non-zero solution of a second order differential equation, necessarily $|h_\theta'(z_j)| > 0$. We may assume that $h_\theta'(z_j) > 0$, which infers that there are $z_j^- \in (z_j - \epsilon_0, z_j)$ and $z_j^+\in (z_j, z_j + \epsilon_0)$ such that $h(z_j^-) < 0$ and $h(z_j^+)  > 0$. We then use  \eqref{est:Gc} and the intermediate value Theorem to conclude that there is $\tilde{z}_j \in (z_j^-, z_j^+)$ for which $q(\tilde{z}_j) = 0$. We also note that $|h'_\theta(z_j)| \geq c_1 > 0$ for $z \in B_{\epsilon_0}(z_j)$, from which and \eqref{est:Gc}, we deduce that $|q'(z)| \ne 0$ for $z \in B_{\epsilon_0}(z_j)$. Hence, $\tilde{z}_j$ is the only zero of $q(z)$ in $B_{\epsilon_0}(z_j)$. This concludes the proof of Lemma \ref{lemm:out0}.
\end{proof}

\begin{proof}[Proof of Lemma \ref{lem:perturbationouter}]

The decomposition \fref{estimationP0} and the associated bounds still hold for $P_0+\frac 12 \partial_z (\tilde V \cdot)/z$. This was the only information used on $P_0$ in the proof of Lemma \ref{lemm:out0}, so the very same proof applies.
\end{proof}

\subsection{Conclusion via matched asymptotic expansions, proof of Proposition \ref{pr:spectralbarAzeta}}

From Lemmas \ref{lemm:inn0} and \ref{lemm:out0}, we are now able to derive the full solution to the eigenproblem \fref{eq:phi}. In particular we claim the following.

\begin{lemma}[Matched eigenfunction for the radial mode] \label{lemm:radialmode} Fix $n\in \mathbb N$. Then there exists $C>0$, such that for $\zeta_0$ small enough, there exists $0< b^* \ll 1$ such that for all $0 < b \leq b^*$, there exists $|\bar \alpha_n|\leq C|\ln b|^{-2}$ such that the following holds for the function

\begin{equation}\label{def:Mn}
\phi_n(r) := \left\{ \begin{array}{ll}
\;\;\;\;\phi_n^\inn (r) &\textup{for} \;\; r \leq R_0,\\
\beta_0 \phi_n^\out (r) &\textup{for}\;\; r \geq R_0,
\end{array}  \right. \quad \beta_0 = \frac{\phi_n^\inn (R_0)}{\phi_n^\out (R_0)}, \quad R_0 = \frac{\zeta_0}{\sqrt b},
\end{equation}

where $\phi_n^\inn=\phi_n^\inn[b,\bar \alpha]$ and $\phi_n^\out(r)=\phi_n^\out[b,\bar \alpha](r) = q[b,\bar \alpha]\left(\frac{br^2}{2}\right)=q\left(z\right)$ are described in Lemmas \ref{lemm:inn0} and \ref{lemm:out0} respectively.

\begin{itemize}
\item[(i)] The function $\phi_n$ is a smooth solution to the equation 
\begin{equation}\label{eq:Mn_mu}
\big(\As_0  - br\partial_r\big) \phi_n = 2b\big(1 - n + \frac{1}{\ln b}+\bar \alpha_n \big)\phi_n.
\end{equation}
\item[(ii)] The estimates \fref{est:nu0ntil} and \fref{est:nurefinedn=01} for $\alpha_n$ hold true. The estimate \fref{est:PhinPointEst} for $\phi_n$ holds true.
\end{itemize}

\end{lemma}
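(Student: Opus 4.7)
The plan is to determine $\bar \alpha_n$ by requiring $C^1$ matching of the inner and outer solutions at $r = R_0$, which, since both solve the same second order ODE on an open neighbourhood of $R_0$, is equivalent to the vanishing of the Wronskian
\begin{equation*}
W(b, \bar \alpha) := \phi_n^{\inn}(R_0)\, \partial_r \phi_n^{\out}(R_0) - \partial_r \phi_n^{\inn}(R_0)\, \phi_n^{\out}(R_0).
\end{equation*}
Once $W(b, \bar\alpha_n) = 0$ is solved, the choice $\beta_0 = \phi_n^{\inn}(R_0)/\phi_n^{\out}(R_0)$ enforces $C^0$ matching as well, and the patched function $\phi_n$ is then automatically a $C^1$ (hence, by the ODE, smooth) solution of \eqref{eq:Mn_mu}. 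I will apply the implicit function theorem in the variable $\bar \alpha$, treating $b$ as the small parameter.

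The first step is to compute $W$ asymptotically. For the inner contribution I insert the decomposition \eqref{eq:forMinn} and use the tails of $T_j$ in \eqref{est:T0iatinf}--\eqref{est:T0iatinfdev}, together with the improved bounds \eqref{bd:improveddecayThetajTj}--\eqref{bd:improveddecayTheta0T0} on the $\Theta$ corrections and the explicit series refinements of $S_j$ and $\Rc_n$ from Lemma \ref{lemm:inn0} for $n=0,1$. Since $R_0 = \zeta_0/\sqrt b$, the dominant contributions $c_{n,j} b^j T_j(R_0)$ are all of comparable size $\sim b\, \zeta_0^{2(j-1)}$ and contain terms linear in $\ln R_0 \sim \tfrac12 |\ln b|$. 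For the outer contribution I use $\phi_n^{\out}(R_0) = q(z_0)$ at $z_0 = \zeta_0^2/2$ and expand $\Gamma(\theta) h_\theta(z_0)$ via \eqref{exp:varphi1at0}; the crucial algebraic identity is $\Gamma(\theta)/\Gamma(\theta-1) = \theta - 1 = -n + \tilde \alpha$, which makes the $\ln z$ branch in $h_\theta$ enter with coefficient proportional to $\tilde \alpha$ for $n=0$, augmented for $n\geq 1$ by the residues of $\Gamma(\theta)$ at the nonpositive integers.

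Matching the two expansions and dividing by a harmless nonvanishing prefactor, the condition reduces to
\begin{equation*}
F_n(b, \bar \alpha) := A_n \tilde \alpha + \frac{B_n}{\ln b} + \Oc\!\left( \frac{1}{|\ln b|^2}\right) = 0,
\end{equation*}
with $A_n \neq 0$ extracted from the combination of the logarithmic tails of $T_j$ and the pole structure of $h_\theta$, and $B_n$ determined by the next-order terms. The explicit nonvanishing of $\hat d_j, d_j, c_{n,j}$ yields $\partial_{\bar \alpha} F_n = A_n + o(1) \neq 0$, so the implicit function theorem produces a unique small $\bar \alpha_n$ with $|\bar \alpha_n| \lesssim 1/|\ln b|^2$, which is \eqref{est:nu0ntil}. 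Pushing the expansion one order further, using $\Psi(\theta) = -1/\tilde \alpha - \gamma + \Oc(\tilde \alpha)$ near the pole and $\Psi(1) = -\gamma$ in \eqref{exp:varphi1at0}, produces the refined constant $\ln 2 - \gamma - n$ of \eqref{est:nurefinedn=01} for $n = 0, 1$; the bound on $b \partial_b \tilde \alpha_n$ follows by differentiating $W(b, \bar \alpha_n(b)) = 0$ in $b$ and invoking the $b$-derivative bounds of Lemmas \ref{lemm:inn0} and \ref{lemm:out0}.

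Finally, the pointwise bound \eqref{est:PhinPointEst} on $[0, R_0]$ is immediate from Lemma \ref{lemm:inn0}; on $[R_0, \infty)$ it follows from Lemma \ref{lemm:out0} after rescaling by $\beta_0$, whose size $|\beta_0| \sim b \zeta_0^2$ has been tracked above, converting $\partial_z^k q$ back to $D_r^k$ derivatives via $\partial_r = br \partial_z$ and using the equation itself for $k = 2$. The main obstacle is the bookkeeping of logarithmic and $1/\ln b$ corrections in the matching: naive expansions lose the $\tilde \alpha$-dependent information needed to identify $A_n$ and $B_n$, and this is precisely the degeneracy noted in the introduction that forced the refined forms \eqref{id:refinementRn=0}--\eqref{id:refinementSjn=1} in Lemma \ref{lemm:inn0} in the first place.
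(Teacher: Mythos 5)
Your approach is essentially the paper's: the Wronskian condition $W(b,\bar\alpha)=0$ is, after dividing by $\phi_n^{\inn}(R_0)\phi_n^{\out}(R_0)$ and using $\partial_r\phi_n^{\out}=br\,\partial_z q$, exactly the paper's matching quantity $\Theta(b,\bar\alpha)=\frac{(r\partial_r)\phi_n^{\inn}(R_0)}{2\phi_n^{\inn}(R_0)}-\frac{(z\partial_z)q(z_0)}{q(z_0)}=0$, the paper expands both ratios using the same ingredients you list (the $T_j$ tails, the $\Theta_j$ corrections, the Kummer series and the $\Gamma,\Psi$ pole at $\theta=1-n+\tilde\alpha$), solves for $\bar\alpha_n$ via a nondegeneracy in $\partial_{\bar\alpha}\Theta$, and differentiates the fixed-point relation to bound $\partial_b\bar\alpha_n$. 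One small slip: the multiplier is $\beta_0=b/2+\Oc(b/|\ln b|)$, not $\sim b\zeta_0^2$ (the ratio of leading terms $\phi_n^{\inn}(R_0)\sim b(-\tfrac{\ln b}{2})H_n$ and $q(z_0)\sim -\tfrac{1}{\tilde\alpha}H_n\sim -\ln b\,H_n$ gives $b/2$ with no $\zeta_0$ dependence at leading order); this does not affect the validity of the pointwise estimate, as the implicit constants there already depend on $\zeta_0$.
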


\begin{corollary} \label{coro:perturbedspectral}
For the perturbed operator $\As_0 \phi_n - br\partial_r \phi_n+r^{-1}\partial_r (V\cdot)$ where $V$ satisfies $|\pa_r^k V|\lesssim |\ln b|^{-1}r^{2-k}\langle r \rangle^{-4}$ for $k=0,1$, then item (i) of Lemma \ref{lemm:radialmode} holds true if the inner and outer eigenfunctions are those associated to the perturbed problems described by Lemma \ref{lem:perturbationinner} and \ref{lem:perturbationouter} respectively. 
\end{corollary}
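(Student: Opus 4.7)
I would determine $\bar\alpha_n$ so that $\phi_n^\inn$ and $\beta_0 \phi_n^\out$ agree up to their first derivative at $r=R_0$. Continuity is built into the definition of $\beta_0$. Since both $\phi_n^\inn$ and $\phi_n^\out$ satisfy the same second-order ODE $(\As_0 - br\partial_r)\phi = 2b(1-n+\tilde\alpha)\phi$ on overlapping neighbourhoods of $R_0$, matching the first derivative forces the two branches to coincide on the overlap by standard ODE uniqueness for Cauchy data, and the glued function $\phi_n$ is automatically smooth on $(0,\infty)$. Item (ii) will then follow from the pointwise bounds already established in Lemmas \ref{lemm:inn0} and \ref{lemm:out0}, combined with control of the matching constant $\beta_0$.

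\textbf{Setting up the matching equation.} After the change of variable $z = br^2/2$, the $C^1$ matching at $R_0$ (equivalently at $z = z_0 = \zeta_0^2/2$) reads
\[
F(b,\bar\alpha) \;:=\; (\partial_r \phi_n^\inn)(R_0)\, q(z_0) \;-\; bR_0\, \phi_n^\inn(R_0)\, (\partial_z q)(z_0) \;=\; 0.
\]
To evaluate $F$ at leading order I would substitute the refined decomposition \fref{eq:forMinn} of $\phi_n^\inn$ together with the asymptotics \fref{est:T0iatinf}--\fref{est:T0iatinfdev} of the $T_j$, and the series expansion \fref{exp:varphi1at0} of $h_\theta$ into $q = \Gamma(\theta) h_\theta + \Gc$ via \fref{eq:qn} and \fref{est:Gc}. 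The key algebraic fact is that the $r^{2(j-1)}(\hat d_j \ln r + d_j)$ tails of the iterated kernel $T_j$ line up term by term with the $z^{i-1}$ coefficients of Kummer's series, because the recurrence \fref{id:di} for $(\hat d_j, d_j)$ is exactly dual to the Pochhammer recursion inside \fref{exp:varphi1at0}. After cancellation of these common leading contributions, what remains has the form
\[
F(b,\bar\alpha) \;=\; A_n(b)\left[\bar\alpha + \mathcal{P}_n(b) + R(b,\bar\alpha)\right],
\]
where $A_n(b) \neq 0$ is an explicit normalization, $\mathcal{P}_n(b)$ is an explicit quantity computed from the refined identities \fref{id:refinementSjn=0}--\fref{id:refinementSjn=1} for $n=0,1$ together with $\Gamma(\theta) \sim (-1)^n/((n-1)!\tilde\alpha)$ from \fref{id:asymptotiquegammafunction}, and the remainder is of size $|R|\lesssim |\ln b|^{-2}$.

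\textbf{Implicit function theorem.} The $\bar\alpha$-derivative estimates \fref{est:S0j}--\fref{est:R0n} for the inner piece and \fref{est:Gc} for the outer piece give $|\partial_{\bar\alpha} R(b,\bar\alpha)| \lesssim |\ln b|^{-1}$ uniformly for $|\bar\alpha| \leq C|\ln b|^{-2}$, so the map $\bar\alpha \mapsto -\mathcal{P}_n(b) - R(b,\bar\alpha)$ is a contraction on the ball $\{|\bar\alpha|\leq C|\ln b|^{-2}\}$ and produces a unique root $\bar\alpha_n$ satisfying \fref{est:nu0ntil}. The refinement \fref{est:nurefinedn=01} for $n=0,1$ is obtained by tracking one further order using the fully explicit logarithmic series in \fref{id:refinementRn=0} and \fref{id:refinementRn=1} together with the digamma expansion $\Psi(\theta) = -\tilde\alpha^{-1} - \gamma + \mathcal{O}(\tilde\alpha)$; this is how the constants $\ln 2 - \gamma - n$ enter. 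For the pointwise bound \fref{est:PhinPointEst}, evaluating $\phi_n^\inn(R_0) \sim R_0^{-2}$ from \fref{estimationphiinnperturbation1} and $\phi_n^\out(R_0) \sim \Gamma(\theta)z_0^{-\theta}$ from \fref{eq:asypvarphi01} shows $|\beta_0|\sim 1$ (using \fref{id:asymptotiquegammafunction}), after which the estimates of Lemmas \ref{lemm:inn0} and \ref{lemm:out0} on the respective zones give the global pointwise bound, with $\partial_b$-bounds obtained by differentiating the IFT relation $F(b,\bar\alpha_n(b))=0$.

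\textbf{Main obstacle and Corollary \ref{coro:perturbedspectral}.} The principal difficulty is the logarithmic degeneracy: the dependence of $F$ on $\bar\alpha$ is not visible at the polynomial level in $b$ but only through the $1/|\ln b|$-size corrections hidden inside Kummer's function near its degenerate parameter value $\theta = 1-n$. This is precisely why Lemma \ref{lemm:inn0} was pushed one level beyond the generic construction to produce the closed-form logarithmic series of \fref{id:refinementSjn=0}--\fref{id:refinementSjn=1}, which are designed to mirror the Kummer series \fref{exp:varphi1at0} so that the comparison leading to $\mathcal{P}_n(b)$ is algebraically tractable. For Corollary \ref{coro:perturbedspectral}, the perturbed inner and outer eigenfunctions produced by Lemmas \ref{lem:perturbationinner} and \ref{lem:perturbationouter} differ from the unperturbed ones by $O(|\ln b|^{-1})$ in $\Ic_0^{-2}$ and $\Ec^{-\theta+\delta}$ respectively, so the matching functional $F$ is perturbed by a quantity of size $O(|\ln b|^{-1})$ with the same Lipschitz sensitivity in $\bar\alpha$; the contraction argument above then applies verbatim to produce the perturbed eigenvalue.
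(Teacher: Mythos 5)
Your overall strategy -- re-run the matching argument with the perturbed inner and outer eigenfunctions supplied by Lemmas \ref{lem:perturbationinner} and \ref{lem:perturbationouter} -- is the right one and close in spirit to what the paper does. However, your quantitative conclusion for the Corollary contains a gap. You assert that since $\phi_n^{\inn,V}-\phi_n^\inn$ has $\Ic_0^{-2}$-norm $\lesssim 1/|\ln b|$, ``the matching functional $F$ is perturbed by a quantity of size $O(|\ln b|^{-1})$ with the same Lipschitz sensitivity in $\bar\alpha$; the contraction argument above then applies verbatim.'' This step does not follow. The $\Ic_0^{-2}$ norm carries the weight $\langle r\rangle^4/r^2$, so the bound $\| \phi_n^{\inn,V}-\phi_n^\inn \|_{\Ic_0^{-2}}\lesssim 1/|\ln b|$ actually gives, at the matching point, $|(\phi_n^{\inn,V}-\phi_n^\inn)(R_0)|\lesssim R_0^{-2}/|\ln b|\sim b/(\zeta_0^2|\ln b|)$; and since $\phi_n^\inn(R_0)\sim b|\ln b|$ by \fref{id:exprphiinR0}, the relative change in the quotient $\frac{r\partial_r\phi_n^\inn(R_0)}{\phi_n^\inn(R_0)}$ (your $F/A_n$, the paper's $\Theta$) is of size $O(|\ln b|^{-2})$, not $O(|\ln b|^{-1})$. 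This distinction matters: your contraction lives on the ball $\{|\bar\alpha|\leq C|\ln b|^{-2}\}$, and a genuine $O(|\ln b|^{-1})$ perturbation of the normalized matching quantity would eject the root from that ball, so the argument cannot be said to apply ``verbatim'' at the stated size. One must first observe, as the paper does, that the perturbed inner eigenfunction still admits the decomposition $\phi_n^{\inn,V}=F_n+\bar\alpha\, b\, G_n+E_n^V$ with $E_n^V=E_n+(\phi_n^{\inn,V}-\phi_n^\inn)$ obeying the identical $\sum_{k\leq 2}|(r\partial_r)^k E_n^V(R_0)|\lesssim b/|\ln b|$ bound as $E_n$ in \fref{bd:Enmatching}; the matching computations of Step~1 then go through with literally unchanged error terms, yielding $|\bar\alpha_n^V|\leq C|\ln b|^{-2}$. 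Your proof needs this weight-at-$R_0$ computation (or an equivalent observation) inserted before the appeal to the contraction on the $|\ln b|^{-2}$-ball.
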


\begin{proof}[Proof of Lemma \ref{lemm:radialmode}] 

Recall from \fref{def:Acnu} the relation
\begin{equation}\label{def:nutil}
\theta = 1 - n + \tilde{\alpha} , \quad \tilde \alpha=\frac{1}{\ln b}+\bar \alpha.
\end{equation}
Since the equation \fref{eq:Mn_mu} is a second order ODE with smooth coefficients outside the origin, it suffices to prove that the two functions and their first order derivatives agree on both sides of $R_0$, and \fref{def:Mn} will then provide a global solution to \fref{eq:Mn_mu} on $(0,\infty)$. From the special choice of $\beta_0$ this is equivalent to:
\begin{equation}\label{def:Thetaeps0}
\frac{\partial_r \phi_n^\inn(R_0)}{\partial_r \phi_n^\out(R_0)} = \beta_0 \Longleftrightarrow \Theta(b,\bar{\alpha}) = \frac{(r\partial_r) \phi_n^\inn(R_0)}{2\phi_n^\inn(R_0)} - \frac{(z \partial_z) q(z_0)}{q(z_0)} = 0.
\end{equation}
Therefore, to prove the Lemma it suffices to prove that for $b$ small enough there exists $\bar{\alpha}=\bar \alpha_n (b)$ such that $\Theta(b, \bar{\alpha}) = 0$, and such that item (ii) holds true. We rely in a standard way on implicit function theorem. The estimate for $\partial_b \bar{\alpha}_n$ then follows by
\begin{equation}\label{eq:alphadevb}
\partial_b \bar{\alpha}_n = - \frac{(\partial_b \Theta)(b, \bar{\alpha}_n)}{(\partial_{\bar{\alpha}_n} \Theta)(b, \bar{\alpha}_n)}.
\end{equation}
To ease the writing, we mention only the dependence in $b$ and $\bar \alpha$ at key locations in what follows.\\

\noindent \textbf{Step 1} \emph{The interior term:} It is convenient to rewrite from \fref{eq:forMinn} the expression of $\phi_n^\inn$ as 
\begin{equation} \label{id:innerdecompositionmatching}
\phi^\inn_n[b,\bar \alpha](r) = F_n[b](r) + \bar{\alpha}b G_n[b,\bar \alpha](r) +E_n[b,\bar \alpha](r),
\end{equation}
where $F_n$ and $\bar \alpha G_n$ are leading order terms and $E_n$ is a remainder:
\begin{align}
F_n[b](r) = \sum_{j =0}^nc_{n,j}b^jT_{j}(r),  \quad G_n[b,\bar \alpha] =\sum_{j = 0}^n b^{j }\Big(-c_{n,j}T_{j + 1}(r) + S_{j}[b,\bar \alpha](r)\Big),  \label{def:FFtil}
\end{align}
$$
E_n[b,\bar \alpha](r) =b\left(-\frac{2}{\ln b}T_1(r)+\As^{-1}\Theta_0(r)\right)+b\Rc_n[b,\bar \alpha](r).
$$
We have the following estimates from \fref{est:T0iatinf}, \fref{bd:forcingweightedspaceinner}, \fref{est:R0n}, and assuming $|\bar \alpha|\lesssim |\ln b|^{-2}$:
\be \label{bd:Enmatching}
\sum_{0\leq k \leq 2, \ 0\leq \ell+\ell'\leq 1}^2| ((r \partial_r)^k(b\partial_b)^\ell \partial_{\bar \alpha}^{\ell'} E_n) (R_0)| \leq C(\zeta_0) \frac{b}{|\ln b|},\\
\ee
\begin{align}
F_n(R_0)=b\left(-\frac{\ln b}{2}H_n(\zeta_0)+K_n(\zeta_0)\right)+\Oc( b^{\frac 32}), \label{def:Hn} \\
(r\pa_rF_n(R_0))(R_0)=b\left(-\frac{\ln b}{2}\zeta \partial_\zeta H_n(\zeta_0)+\zeta \partial_\zeta K_n(\zeta_0)\right)+\Oc( b^{\frac 32}), \nonumber
\end{align} 
where $H_n$ and $G_n$ are defined by:
\be \label{def:HnGn}
H_n(\zeta_0)=\sum_{i=1}^{n}  c_{n,i} \hat d_i\zeta_0^{2(i-1)}, \ \ K_n(\zeta_0)=\frac{1}{\zeta_0^2}+\sum_{i=1}^{n} c_{n,i} \zeta_0^{2(i-1)}\left( \hat d_{i}  \ln \zeta_0 + d_{i} \right).
\ee
Notice for $0<\zeta_0\ll 1$ small that $|H_n(\zeta_0)|\neq 0$. Gathering all these estimates and \fref{est:S0j} we arrive at
\be \label{id:exprphiinR0}
\phi^\inn_n (R_0)=b\left(-\frac{\ln b}{2}H_n(\zeta_0)+K_n(\zeta_0)+\bar \alpha G_n(R_0) +\Oc(\frac{1}{|\ln b|})\right),
\ee
$$
r\partial r \phi^\inn_n (R_0)=b\left(-\frac{\ln b}{2}\zeta \partial_\zeta H_n(\zeta_0)+\zeta \partial_\zeta K_n(\zeta_0)+\bar \alpha r\partial_r G_n(R_0) +\Oc(\frac{1}{|\ln b|})\right),
$$
\begin{align*}
\nonumber \partial_b\left( \frac{1}{b\ln b } \phi^\inn_n (R_0)\right)&=-\frac{1}{b|\ln b|^2}(\zeta \partial_\zeta K_n(\zeta_0)+\bar \alpha r\partial_r G_n(R_0) +\Oc(\frac{1}{|\ln b|})) \\
& \quad +\frac{1}{b\ln b}   \left( \bar  \alpha \partial_b G_n(R_0) +\partial_b E(R_0)\right) = \Oc\left(\frac{1}{b|\ln b|^2}\right),
\end{align*}
\begin{align*}
\nonumber \partial_b \left( \frac{1}{b\ln b } r\partial r \phi^\inn_n (R_0)\right)& =-\frac{1}{b|\ln b|^2}(\zeta \partial_\zeta K_n(\zeta_0)+\bar \alpha r\partial_r G_n(R_0) +\Oc(\frac{1}{|\ln b|})) \\
& \quad   +\frac{1}{b\ln b}   \left( \bar  \alpha \partial_b r\partial_rG_n(R_0) +\partial_b r\partial_r E(R_0)\right)=\Oc\left(\frac{1}{b|\ln b|^2}\right),
\end{align*}
\begin{align*}
\nonumber \partial_{\bar \alpha} (\phi^\inn_n (R_0))&=bG_n(R_0)+\bar \alpha b \partial_{\bar \alpha} G_n(R_0)+b \partial_{\bar \alpha}E_n(R_0)\\
&=bG_n(R_0)+b\Oc( |\ln b|^{-2})\Oc(\ln b) +b\Oc(|\ln b|^{-1})=b\left(G_n(R_0)+\Oc(|\ln b|^{-1})\right),
\end{align*}
\begin{align*}
\nonumber \partial_{\bar \alpha} (r\partial_r \phi^\inn_n (R_0))&=br\partial_r G_n(R_0)+\bar \alpha b \partial_{\bar \alpha} r\partial_r G_n(R_0)+b \partial_{\bar \alpha}r\partial_r E_n(R_0)\\
&=br\partial_r G_n(R_0)+b\Oc(|\ln b|^{-2})\Oc( \ln b) +b\Oc(|\ln b|^{-1})=b\left(r\partial_r G_n(R_0)+\Oc(|\ln b|^{-1})\right).
\end{align*}
We compute that, from \fref{est:S0j}:
$$
 |r\partial_r G_n(R_0)|+|G_n(R_0)|\leq C(n) |\ln b|, \quad \mbox{with } C(n) \mbox{ independent of } \zeta_0.
$$
The collection of the above identities gives us the following leading order expression for the quantity involving the inner solution in \fref{def:Thetaeps0}:
\begin{align}
\nonumber &\frac{(r\partial_r) \phi_n^\inn(R_0)}{\phi_n^\inn(R_0)} =\frac{-\frac{\ln b}{2}\zeta \partial_\zeta H_n(\zeta_0)+\zeta \partial_\zeta K_n(\zeta_0)+\bar \alpha r\partial_r G_n(R_0) +\Oc(\frac{1}{|\ln b|})}{-\frac{\ln b}{2}H_n(\zeta_0)+K_n(\zeta_0)+\bar \alpha G_n(R_0) +\Oc(\frac{1}{|\ln b|})} \\
\nonumber &=\frac{\zeta \partial_\zeta H_n(\zeta_0)-\frac{2}{\ln b} \zeta \partial_\zeta K_n(\zeta_0)-\frac{2}{\ln b} \bar \alpha r\partial_r G_n(R_0) +\Oc( \frac{1}{|\ln b|^2})}{H_n(\zeta_0)-\frac{2}{\ln b}K_n(\zeta_0)-\frac{2}{\ln b}\bar \alpha G_n(R_0) +\Oc(\frac{1}{|\ln b|^2})} \\
\nonumber &=\frac{\zeta \partial_\zeta H_n(\zeta_0)}{H_n(\zeta_0)}+\frac{2}{\ln b}\frac{K_n(\zeta_0)\zeta \partial_\zeta H_n(\zeta_0)-H_n(\zeta_0)\zeta\partial_\zeta K_n(\zeta_0)}{H_n^2(\zeta_0)}\\
& \qquad +\frac{2}{\ln b}\bar \alpha \frac{G_n \zeta \partial_\zeta H_n(\zeta_0)-H_n(\zeta_0)\zeta \partial_\zeta G_n}{H_n(\zeta_0)^2}+\Oc(|\ln b|^{-2}) \nonumber\\
&=\frac{\zeta \partial_\zeta H_n(\zeta_0)}{H_n(\zeta_0)}+\frac{2}{\ln b}\frac{K_n(\zeta_0)\zeta \partial_\zeta H_n(\zeta_0)-H_n(\zeta_0)\zeta\partial_\zeta K_n(\zeta_0)}{H_n^2(\zeta_0)}+\bar \alpha \frac{\Oc(1)}{H_n(\zeta_0)^2}+\Oc(|\ln b|^{-2}),
\label{est:phin0D1}
\end{align}
and 
\begin{align}
\nonumber & \partial_b \left(\frac{(r\partial_r) \phi_n^\inn(R_0)}{\phi_n^\inn(R_0)}\right) = \partial_b \left(\frac{(b\ln b)^{-1}(r\partial_r) \phi_n^\inn(R_0)}{(b\ln b)^{-1}\phi_n^\inn(R_0)}\right)\\
\nonumber &= \frac{ \partial_b ((b\ln b)^{-1}(r\partial_r) \phi_n^\inn(R_0))(b\ln b)^{-1}\phi_n^\inn(R_0)-\partial_b (b\ln b)^{-1}\phi_n^\inn(R_0)b\ln b)^{-1}(r\partial_r) \phi_n^\inn(R_0) }{((b\ln b)^{-1}\phi_n^\inn(R_0))^2} \\
\label{est:phin0D1pab} &= \frac{\Oc(b^{-1}|\ln b|^{-2}) }{((b\ln b)^{-1}\phi_n^\inn(R_0))^2} =\Oc\left(\frac{1}{b|\ln b|^2}\right),
\end{align}
and 
\begin{align}
\nonumber & \partial_{\bar \alpha} \left(\frac{(r\partial_r) \phi_n^\inn(R_0)}{\phi_n^\inn(R_0)}\right) = \frac{ \partial_{\bar \alpha }r\partial_r \phi_n^\inn(R_0)\phi_n^\inn(R_0)-\partial_{\bar \alpha}\phi_n^\inn(R_0)\partial_r \phi_n^\inn(R_0)}{|\phi_n^\inn(R_0)|^2}\\
\nonumber &= \frac{ \left(\zeta \partial_\zeta G_n(R_0)+\Oc(|\ln b|^{-1})\right) \left(-\frac{\ln b}{2}H_n(\zeta_0)+\Oc(1)\right)-\left(G_n(R_0)+\Oc( |\ln b|^{-1})\right)\left(-\frac{\ln b}{2}\zeta\partial_\zeta H_n(\zeta_0) +\Oc(1)\right)}{\left(-\frac{\ln b}{2}H_n(\zeta_0)+\Oc(1)\right)^2} \\
\label{est:phin0D1patheta} &=\frac{2}{\ln b} \frac{G_n \zeta \partial_\zeta H_n(\zeta_0)-\zeta \partial_\zeta G_n H_n(\zeta_0)}{H_n^2(\zeta_0)}=\Oc(1)
\end{align}
where the constant in the two $\Oc(1)$ above are independent of $\zeta_0$. \\

\noindent \underline{The case $n=1$}: Injecting $\bar \alpha=e_1/|\log b|^2+\hat \alpha$, $|\hat \alpha|\lesssim |\ln b|^{-3}$ in the refined asymptotics \fref{id:refinementRn=1} and \fref{id:refinementSjn=1} gives
$$
\phi^\inn_1(r)=F_1(r)+\hat \alpha b G_1(r)+E_1(r),
$$
where
\begin{align*}
F_1(r) &=T_0(r)+2bT_1(r)+b\left(-\frac{2}{\ln b}T_1(r)+\As_0^{-1}\Theta_0\right)\\
& \quad +\frac{2e_1}{|\ln b|^2}(-bT_1(r)-2b^2T_2(r)-\frac{b^2}{2}\sum_{i=2}^{\infty}\frac{(1)_{i-1}}{(2)_ii!2^i}b^{i-1} r^{2i}\ln (r+1))\\
& \quad -\frac{b}{2}\sum_{i=1}^{\infty} \frac{(1)_{i-1}}{(2)_ii!2^i}b^ir^{2i}\left\{\frac{1}{\ln b}\left[2\ln (r+1)-\frac 1i -\Psi(i+2)-\gamma\right]+1-\frac{1}{\ln b}\right\},
\end{align*}
$$
G_1(r)=2(-T_1(r)+S_0(r)-2bT_2+bS_1(r)), \quad E_1(r)=b\tilde \Rc_1(r)+\frac{2e_1}{|\ln b|^2}(bS_0(r)+b^2\tilde S_1(r)).
$$
One has from \fref{est:T0iatinf}, as $\hat d_1=-1/2$, $d_1=1/4$ and $\hat d_2=1/16$, $e_1=\ln 2-\gamma-1$ and $R_0=\zeta_0/\sqrt b$:
\begin{align*}
F_1(R_0)&=\frac{b}{\zeta_0^2}+2b\left(-\frac{\ln \zeta_0-\frac{\ln b}{2}}{2}+\frac 14\right)+b\left(-\frac{2}{\ln b}\left(-\frac{\ln \zeta_0-\frac{\ln b}{2}}{2}\right)+\frac 12\right)\\
\nonumber &+\frac{2e_1}{|\ln b|^2}\left( -\frac{b\ln b}{4}+\frac{b\zeta_0^2 \ln b}{16}+ \frac{b\ln b}{4} \sum_{i=2}^{\infty}\frac{(1)_{i-1}}{(2)_ii!2^i}\zeta_0^{2i}\right)\\
\nonumber &-\frac{b}{2}\sum_{i=1}^{\infty} \frac{(1)_{i-1}}{(2)_ii!2^i}\zeta_0^{2i}\left\{\frac{1}{\ln b}\left[2\ln \zeta_0-\ln b-\frac 1i -\Psi(i+2)-\gamma\right]+1-\frac{1}{\ln b}\right\}+\Oc\left(\frac{b}{|\ln b|^2}\right) 
\end{align*}
\begin{align}
\nonumber F_1(R_0) &= b\Bigr[ \frac{\ln b}{2}+\frac{1}{\zeta_0^2}-\ln \zeta_0+\frac 12+\frac{\ln \zeta_0}{\ln b}+\frac{e_1}{2\ln b }\left(-1+ \sum_{i=1}^{\infty}\frac{(1)_{i-1}}{(2)_ii!2^i}\zeta_0^{2i}\right)\\
\nonumber &-\frac{1}{2\ln b}\sum_{i=1}^{\infty} \frac{(1)_{i-1}}{(2)_ii!2^i}\zeta_0^{2i} \left[2\ln \zeta_0-\frac 1i -\Psi(i+2)-\gamma -1\right]\Bigr]\\
\nonumber &= b\left\{ \frac{\ln b}{2}+\frac{1}{\zeta_0^2}-\ln \zeta_0+\frac 12+\frac{\ln \zeta_0}{\ln b}-\frac{e_1}{2\ln b } -\frac{1}{2\ln b}\sum_{i=1}^{\infty} \frac{(1)_{i-1}}{(2)_ii!2^i}\zeta_0^{2i} \left[2\ln \zeta_0-\ln 2-\frac 1i -\Psi(i+2)\right]\right\}\\
&+\Oc\left(\frac{b}{|\ln b|^2}\right),
\end{align}
and similarly, we have
$$
 (r\partial_r F_1)(R_0)= \frac{-2b}{\zeta_0^2}-b +\frac{b}{\ln b} - \frac{b}{2\ln b}\sum_{i=1}^{\infty} \frac{(1)_{i-1}}{(2)_i i!2^i}\zeta_0^{2i}2i\left[2\ln \zeta_0-\Psi(i+2)-\ln 2  \right]+\Oc\left(\frac{b}{|\ln b|^2}\right).
$$
From \fref{id:refinementRn=1} and \fref{id:refinementSjn=1}, we obtain
$$
\sum_{0\leq k \leq 2} ((r \partial_r)^k E_1) (R_0)| \leq C(\zeta_0) \frac{b}{|\ln b|^2},
$$
Hence, as $G_1(R_0)=\Oc(|\ln b|)$ and $r \partial_r G_1(R_0)=\Oc(|\ln b|)$, we obtain from the above identities
\begin{align*}
&\phi_1^\inn(R_0)= b\Bigr[ -\frac{\ln b}{2}H_1(\zeta_0)+K_1(\zeta_0)+\frac{1}{2\ln b}J_1(\zeta_0) +\hat \alpha b G_1(R_0)+\Oc(\frac{1}{|\ln b|^2})\Bigr],\\
 &r\partial_r \phi_1^\inn(R_0) = b\left[\zeta\partial_\zeta K_1+\frac{1}{2\ln b}\zeta \partial_\zeta J_1  + \hat \alpha r\partial_r G_1(R_0)+\Oc\left(\frac{1}{|\ln b|^2}\right)\right].
\end{align*}
where we used \fref{def:HnGn}, so that $H_1(\zeta)=1$ and $K_1(\zeta)=\zeta^{-2}-\ln \zeta+1/2$ and 
\be \label{def:J1matching}
J_1(\zeta_0)= 2 \ln \zeta_0-e_1-\sum_{i=1}^{\infty} \frac{(1)_{i-1}}{(2)_ii!2^i}\zeta_0^{2i} \left[2\ln \zeta_0-\ln 2-\frac 1i -\Psi(i+2)\right].
\ee
We finally obtain
\begin{align}
\nonumber \frac{r\partial_r\phi^{\inn}_1(R_0)}{\phi^{\inn}_1(R_0)} & =  \frac{\zeta\partial_\zeta K_1+\frac{1}{2\ln b}\zeta \partial_\zeta J_1  + \hat \alpha r\partial_r G_1(R_0)+\Oc\left(\frac{1}{|\ln b|^2}\right)}{-\frac{\ln b}{2}H_1(\zeta_0)+K_1(\zeta_0)+\frac{1}{2\ln b}J_1(\zeta_0) +\hat \alpha b G_1(R_0)+\Oc(\frac{1}{|\ln b|^2})}\\
\nonumber& = -\frac{2}{\ln b} \frac{\zeta\partial_\zeta K_1+\frac{1}{2\ln b}\zeta \partial_\zeta J_1  + \hat \alpha r\partial_r G_1(R_0)+O\left(\frac{1}{|\ln b|^2}\right)}{H_1(\zeta_0)-\frac{2}{\ln b}K_1(\zeta_0)-\frac{1}{|\ln b|^2}J_1(\zeta_0) -\frac{2}{\ln b}\hat \alpha b G_1(R_0)+\Oc(\frac{1}{|\ln b|^3})}\\
\nonumber &=-\frac{2}{\ln b}\left\{ \frac{\zeta\partial_\zeta K_1}{H_1}+\frac{1}{\ln b}\frac{\zeta \partial_\zeta J_1H_1+2K_1\zeta \partial_\zeta K_1}{H_1^2} \right.\\
\nonumber & \qquad \quad \left.+\tilde \alpha \frac{r\partial_r G_1 H_1+\frac{2}{\ln b}G_1\zeta \partial_\zeta K_1}{H_1^2}+\Oc(|\ln b|^{-2})\right\}\\
\label{id:refinedinteriorn=1} &=-\frac{2}{\ln b} \frac{\zeta\partial_\zeta K_1}{H_1}-\frac{2}{\ln b^2}\frac{\zeta \partial_\zeta J_1H_1+2K_1\zeta \partial_\zeta K_1}{H_1^2}+\tilde \alpha \frac{\Oc(1)}{H_1^2}+\Oc(|\ln b|^{-3}) 
\end{align}
where the constant in the $\Oc(1)$ is independent of $\zeta_0$.\\

\noindent \underline{The case $n=0$:} We first use the refined asymptotics \fref{id:refinementRn=0} and \fref{id:refinementSjn=0} to obtain:
$$
\phi^\inn_0(r)=F_0(r)+\bar \alpha b G_0(r)+E_0(r),
$$
where:
\begin{align}
\nonumber F_0(r) &=T_0(r)+b\left(-\frac{2}{\ln b}T_1(r)+\As_0^{-1}\Theta_0\right)+\frac{b}{2}\sum_{i=1}^{\infty}\frac{1}{(2)_i2^i}b^{i} r^{2i}\left\{ \frac{1}{\ln b}\left[2\ln (r+1)-\Psi (i+2)-\gamma\right]+1\right\}, 
\end{align}
$$
G_0(r)=2\left(-T_1(r)+\frac 12 \sum_{i=1}^\infty \frac{1}{(2)_i2^i}b^ir^{2i}\ln (r+1)\right), \quad E_0(r)=b\tilde \Rc_0(r)+2\bar \alpha b \tilde S_0.
$$
One has from \fref{est:T0iatinf}, as $\hat d_1=-1/2$, $d_1=1/4$:
\begin{align*}
\nonumber F_0(R_0)&=\frac{b}{\zeta_0^2}+b\left(-\frac{2}{\ln b}\left(-\frac{\ln \zeta_0-\frac{\ln b}{2}}{2}+\frac 14\right)+\frac 12\right) \\
& \quad +\frac{b}{2}\sum_{i=1}^{\infty}\frac{1}{(2)_i2^i} \zeta_0^{2i}\left\{ \frac{1}{\ln b}\left[2\ln \zeta_0-\Psi (i+2)-\gamma\right]\right\}+\Oc(b^{\frac 32}) \\
\nonumber &=\frac{b}{\zeta_0^2}+\frac{b \ln \zeta_0}{\ln b}-\frac{b}{2\ln b}+\frac{b}{2\ln b}\sum_{i=1}^{\infty}\frac{1}{(2)_i2^i} \zeta_0^{2i}\left\{ 2\ln \zeta_0-\Psi (i+2)-\gamma \right\} +\Oc(b^{\frac 32}), 
\end{align*}
and similarly, we have
$$
 (r\partial_r F_0)(R_0)= \frac{-2b}{\zeta_0^2} +\frac{b}{\ln b} + \frac{b}{2\ln b}\sum_{i=1}^{\infty} \frac{1}{(2)_i 2^i}\zeta_0^{2i}2i\left[2\ln \zeta_0+\frac 1i-\Psi(i+2) -\gamma \right]+\Oc\left( b^{\frac 32}\right)
$$
$$
\pa_b (b^{-1}F_0(R_0))=O\left(\frac{1}{b|\ln b|^2}\right), \ \ \pa_b (b^{-1}r\partial_r F_0(R_0))=\Oc\left(\frac{1}{b|\ln b|^2}\right).
$$
From \fref{id:refinementRn=0}, we obtain
\begin{align*}
&\sum_{0\leq k \leq 2, \ 0\leq \ell+\ell'\leq 1} ((b\pa_b)^\ell \pa_{\bar \alpha}^{\ell'}(r \partial_r)^k E_0) (R_0)| \leq C(\zeta_0) \frac{b}{|\ln b|^2}.
\end{align*}
One also has
$$
G_0(R_0)=2\left(-\frac{1}{4}\ln b-\frac 14 \ln b \sum_{i=1}^\infty \frac{1}{(2)_i2^i}\zeta_0^{2i}\right)+\Oc( 1)=-\frac{\ln b}{2} \tilde G_0(\zeta_0)+\Oc(1), \quad \pa_bG_0(R_0)=\Oc\left(\frac 1b \right),
$$
where
\be \label{def:tildeG0matching}
\tilde G_0(\zeta_0)=\sum_{i=0}^\infty \frac{1}{(2)_i2^i}\zeta_0^{2i},
\ee
so that 
$$
r\partial_r G_0(R_0)=-\frac{\ln b}{2} \zeta \partial_\zeta \tilde G_0(\zeta_0)+\Oc(1), \quad \pa_b r\partial_r \tilde G_0(R_0)=\Oc\left(\frac 1b \right).
$$
We obtain from the above identities
\begin{align}
\nonumber \phi_0^\inn(R_0)&= b\Bigr[ \frac{1}{\zeta_0^2}+\frac{1}{2 \ln b}J_0(\zeta_0) -\frac{\ln b}{2}\bar \alpha \tilde{G}_0(\zeta_0)+\Oc(|\ln b|^{-2})  \Bigr],
\end{align}
\begin{align}
\nonumber r\partial_r \phi_0^\inn(R_0)&= b\Bigr[ \frac{-2}{\zeta_0^2}+\frac{1}{\ln b}\zeta \pa_\zeta J_0(\zeta_0) -\frac{\ln b}{2}\bar \alpha r\partial_r\tilde{G}_0(\zeta_0)+\Oc( |\ln b|^{-2})  \Bigr],
\end{align}
where
\be \label{def:J0matching}
J_0(\zeta_0)= 2 \ln \zeta_0-1+\sum_{i=1}^{\infty} \frac{1}{(2)_i2^i}\zeta_0^{2i} \left[2\ln \zeta_0-\Psi(i+2)-\gamma \right],
\ee
and for $\bar \alpha=\Oc( |\ln b|^{-2})$,
$$
\pa_b \left( b^{-1} \phi_0^\inn(R_0) \right)=O\left(\frac{1}{b |\ln b|^2}\right), \quad \pa_b \left(b^{-1} r\partial_r \phi_0^\inn(R_0)\right)=\Oc\left(\frac{1}{b |\ln b|^2}\right),
$$
$$
\pa_{\bar \alpha} \left( \phi_0^\inn(R_0) \right)=-\frac{b\ln b}{2}\tilde G_0(\zeta_0)+\Oc\left( b\right), \quad \pa_{\bar \alpha} \left( r\partial_r \phi_0^\inn(R_0)\right)=-\frac{b\ln b}{2}r\pa_r\tilde G_0(\zeta_0)+\Oc\left(b\right)
$$
We finally obtain
\begin{align}
\nonumber &\frac{r\partial_r\phi^{\inn}_0(R_0)}{\phi^{\inn}_0(R_0)}  =  \frac{\frac{-2}{\zeta_0^2}+\frac{1}{2\ln b}\zeta \pa_\zeta J_0(\zeta_0) +\bar \alpha r\partial_rG_0(R_0)+\Oc( |\ln b|^{-2}) }{ \frac{1}{\zeta_0^2}+\frac{1}{2 \ln b}J_0(\zeta_0) +\bar \alpha G_0(R_0)+\Oc( |\ln b|^{-2})}\\
\nonumber & \quad  =  \frac{-2+\frac{\zeta_0^2}{\ln b}\zeta \pa_\zeta J_0(\zeta_0) +\zeta_0^2\bar \alpha r\partial_rG_0(R_0)+\Oc( |\ln b|^{-2}) }{1+\frac{\zeta_0^2}{2 \ln b}J_0(\zeta_0) +\bar \alpha \zeta_0^2G_0(R_0)+\Oc( |\ln b|^{-2})}\\
\label{id:refinedinteriorn=0} & \quad =-2+\frac{1}{\ln b}\zeta_0^2(\frac 12 \zeta \pa_\zeta J_0+J_0)-\frac{\ln b}{2} \bar \alpha \zeta_0^2(\zeta \pa_\zeta \tilde G_1(\zeta_0)+2\tilde G_1(\zeta_0)+\Oc( |\ln b|^{-1}))+\Oc( |\ln b|^{-2}),
\end{align}
and 
\begin{align}
\nonumber &\pa_b \left( \frac{r\partial_r\phi^{\inn}_0(R_0)}{\phi^{\inn}_0(R_0)}\right)=\pa_b \left( \frac{b^{-1}r\partial_r\phi^{\inn}_0(R_0)}{b^{-1}\phi^{\inn}_0(R_0)}\right)\\
& \quad = \frac{ \pa_b(b^{-1}r\partial_r\phi^{\inn}_0(R_0))b^{-1}\phi^{\inn}_0(R_0)-\pa_b(b^{-1}\phi^{\inn}_0(R_0))b^{-1}r\partial_r\phi^{\inn}_0(R_0)}{b^{-1}\phi^{\inn}_0(R_0)}=\Oc( \frac{1}{b|\ln b|^2}), \label{id:refinedinteriorn=0pab} 
\end{align}
and 
\begin{align}
\label{id:refinedinteriorn=0paalpha}  \pa_{\bar \alpha} \left( \frac{r\partial_r\phi^{\inn}_0(R_0)}{\phi^{\inn}_0(R_0)}\right)&=-\frac{\ln b}{2} \zeta_0^2(\zeta \pa_\zeta \tilde G_1(\zeta_0)+2\tilde G_1(\zeta_0)+\Oc( |\ln b|^{-1})) ,
\end{align}
where the constant in the $\Oc( |\ln b|^{-2})$ is independent of $\bar \alpha$.\\

\noindent \textbf{Step 2:} \emph{The exterior term}. Recall the decomposition $q[b,\bar \alpha](z)=\Gamma (\theta) h_\theta(z)+\mathcal G[b,\bar \alpha](z)$ from \fref{eq:qn}. From the estimates \fref{est:Gc} the second term is of lower order and satisfies:
\be \label{bd:mathgexterieur}
\sum_{0\leq k+\ell\leq 1} |(b\pa_b)^k \pa_{\bar \alpha}^\ell (\mathcal G(z_0))|+|(b\pa_b)^k \pa_{\bar \alpha}^\ell (z\pa_z \mathcal G(z_0))| \lesssim b^{\frac 12}.
\ee
We now investigate the formula giving $h_{\theta}$. From the recurrence relation of the Gamma function and the identity $\partial_\theta (\theta)_i=(\theta)_i(\Psi(\theta+i)-\Psi (\theta))$:
\bea
\label{id:hthetamatching} \Gamma (\theta) h_\theta(z) &=& \frac{1}{z} + (\theta-1)\sum_{i = 0}^\infty\frac{(\theta)_i}{(2)_i i!} z^i \Big[\ln z + \Psi(\theta + i) - \Psi(1 + i) - \Psi(2+i) \Big],\\
\nonumber z\pa_z \Gamma (\theta)h_\theta(z)&=& - \frac{1}{z} +(\theta-1)\sum_{i = 0}^\infty\frac{(\theta)_i}{(2)_i i!} z^i \Big[ i\left(\ln z + \Psi(\theta + i) - \Psi(1 + i) - \Psi(2+i)\right) +1\Big],\\
\eea
\bee
\partial_{\theta}\Gamma (\theta)h_\theta(z) &=& \sum_{i = 0}^\infty\frac{(\theta)_i}{(2)_i i!} z^i \Big[ \big(\ln z + \Psi(\theta + i) - \Psi(1 + i) - \Psi(2+i)\big)\big(1+(\theta-1)(\Psi (\theta+i)-\Psi (\theta))\big) \\
&& \hspace*{10cm} +(\theta-1)\partial_\theta \Psi (\theta+i) \Big],
\eee
\bee
&&\partial_{\theta} z\pa_z \Gamma (\theta)h_\theta(z)\\
&& \quad =\sum_{i = 0}^\infty\frac{(\theta)_i}{(2)_i i!} z^i \Big[i \big\{\left(\ln z + \Psi(\theta + i) - \Psi(1 + i) - \Psi(2+i)\big)\big(1+(\theta-1)(\Psi (\theta+i)-\Psi (\theta))\big\}\right) \\
 && \hspace*{10cm}+i\left((\theta-1)\partial_\theta \Psi (\theta+i)\right) +1\Big].
\eee
We now decompose all above expressions into leading order and lower terms. We first collect some estimates on the coefficients. Note that for $i\geq n$ one has from the recurrence relation of the Gamma function:
\begin{align} \label{id:formulathetaexpansion}
(\theta)_i&=\frac{\Gamma (\theta +i)}{\Gamma (\theta)}=(\theta)(\theta+1)...(\theta+i-1)=(1-n+\tilde \alpha)(2-n+\tilde \alpha)...(i-n+\tilde \alpha)=\Oc( |\tilde \alpha|)
\end{align}
because there is some $0\leq j \leq i-1$ such that $ 1-n+j=0$. Moreover, for a large enough argument the digamma function
\be \label{id:nonsingularPsi}
\Psi (\theta+i)=\Psi (1-n+i+\tilde \alpha)=\Psi (1-n+i)+\Oc( \tilde \alpha)=\Oc( 1) \quad \mbox{ for } i\geq n
\ee
is non-singular since $1-n+i>1$. We recall the recurrence relation for the digamma function $\Psi (z+1)=\Psi(z)+1/z$, with $\Psi (1)=-\gamma$ the Euler constant. Then, if $k$ is an integer:
$$
\Psi (k+1)=\frac{1}{k}+\Psi (k)=\frac{1}{k}+\frac{1}{k-1}+...+\frac 12 +1-\gamma.
$$
Hence, refining \fref{id:formulathetaexpansion} for $i<n$, we obtain
\begin{align} 
\nonumber (\theta)_i &= (1-n)_i\left(1+\tilde \alpha (\Psi (n-i)-\Psi(n))\right)+\Oc( \tilde \alpha^2)
\end{align}
and
\begin{align}
\nonumber \Psi (\theta+i) & = -\frac{1}{\theta+i}+\Psi(\theta+i+1)=-\frac{1}{1-n+i+\tilde \alpha}-\frac{1}{2-n+i+\tilde \alpha}-...-\frac{1}{-1+\tilde \alpha}
-\frac{1}{\tilde \alpha}+\Psi (1+\tilde \alpha)\\
\label{id:psithetaidvpt} &= -\frac{1}{\tilde \alpha}+\Psi(n-i)+\Oc( \tilde \alpha),
\end{align}
\be \label{id:estimationpathetapsi}
\partial_\theta \Psi(\theta+i) =\partial_{\tilde \alpha}\Psi(\theta+i) =\frac{1}{\tilde \alpha^2}+\Oc( 1) \quad \mbox{ for } i<n.
\ee
The coefficients that will appearing in the expansion are related to the inner expansion the following way. Using the recurrence relations \fref{id:recurrencedi}-\fref{id:recurrencecnj} and the initial values for $c_{n,1}$ and $\hat d_1$, there holds
\be \label{id:linkcoefficients1}
-c_{n,i+1}\hat d_{i+1}=n\frac{(1-n)_i}{(2)_ii!2^i},
\ee
and similarly using the recurrence relations \fref{id:recurrencedi}, there holds
\be \label{id:linkcoefficients2}
-\frac{2d_{i+1}}{\hat d_{i+1}}=2+\frac{2}{2}+\frac{2}{3}+...+\frac{2}{i}+\frac{1}{i+1}=\Psi(i+2)+\Psi(i+1)+2\gamma.
\ee
Hence, the strategy is the following. We first truncate the series \fref{exp:varphi1at0} expressing $h_{\theta}$ for $0<z\lesssim 1$ using \fref{id:nonsingularPsi} and \fref{id:formulathetaexpansion}. Then, we expand it  with respect to $\tilde \alpha$. Finally, we express the coefficients in function of those of the inner expansion via \fref{id:linkcoefficients1}-\fref{id:linkcoefficients2}. The result of this strategy is given by 
\begin{align*}
 \Gamma (\Theta) h_\theta(z) &=  \frac{1}{z} +(\theta-1)\sum_{i = 0}^\infty\frac{(\theta)_i}{(2)_i i!} z^i \Big[\ln z + \Psi(\theta + i) - \Psi(1 + i) - \Psi(2+i) \Big]\\
\nonumber &= \frac{1}{z} +(\theta-1)\sum_{i = 0}^{n-1} [...]+(\theta-1)\sum_{i = n}^{\infty} [...] \\
\nonumber&= \frac{1}{z} +(\theta-1)\sum_{i = 0}^{n-1}\frac{(\theta)_i}{(2)_i i!} z^i \Big[\ln z + \Psi(\theta + i) - \Psi(1 + i) - \Psi(2+i) \Big]+\Oc( |\tilde \alpha |).
\end{align*}
\begin{align}
\nonumber  \Gamma (\Theta) h_\theta(z) &= \frac{1}{z} +(\tilde \alpha-n)\sum_{i = 0}^{n-1}\frac{ (1-n)_i}{(2)_i i!} \left(1+\tilde \alpha \left(\Psi(n-i)-\Psi(n)\right)+\Oc( \tilde \alpha^2)\right)z^i \\
\nonumber  &\times \Big[\ln z -\frac{1}{\tilde \alpha}+\Psi(n-i)+\Oc( |\tilde \alpha|) - \Psi(1 + i) - \Psi(2+i) \Big]+\Oc( |\tilde \alpha|)\\
\nonumber &=  \frac 1z+\sum_{i=0}^{n-1}n \frac{(1-n)_i}{(2)_ii!}z^i\left(-\ln z -\Psi(n+1)+\Psi(i+1)+\Psi(i+2)+\frac{1}{\tilde \alpha} \right)+\Oc( |\tilde \alpha|)\\
\nonumber &= \frac 1z+\sum_{i=0}^{n-1}n \frac{(1-n)_i}{(2)_ii!}z^i\left( -\ln z-\ln 2 +\Psi(i+1)+\Psi(i+2)+2\gamma+e_n+\frac{1}{\tilde \alpha} \right)+\Oc( |\tilde \alpha|),\\
\label{idhthetaext1}&=\frac 1z+\sum_{i=1}^{n} 2^{i-1} c_{n,i} z^{i-1}\left( \hat d_{i} \left( \ln z+\ln 2 -e_n-\frac{1}{\tilde \alpha}\right) +2 d_{i} \right)+\Oc( |\tilde \alpha|).
\end{align}
Similarly, skipping the computations which are verbatim the same as the one above yields
\begin{align}
\nonumber z\pa_z \Gamma (\theta)h_\theta(z)&= - \frac{1}{z} +(\theta-1)\sum_{i = 0}^\infty\frac{(\theta)_i}{(2)_i i!} z^i \Big[ i\left(\ln z + \Psi(\theta + i) - \Psi(1 + i) - \Psi(2+i)\right) +1\Big] \\
\label{idhthetaext2} &  = - \frac{1}{z} + \sum_{i = 1}^{n} 2^{i-1} c_{n,i} z^{i-1} \Big[ (i-1)\left(\hat d_i\left(\ln z +\ln 2 -e_n-\frac{1}{\tilde \alpha}\right)+2d_i\right) +\hat d_i\Big] +\Oc( |\tilde \alpha|)
\end{align}
Then, using \fref{id:formulathetaexpansion}, \fref{id:nonsingularPsi}, \fref{id:psithetaidvpt}, \fref{id:estimationpathetapsi} and $\partial_\theta \tilde \alpha=1$, we compute
\begin{align}
\nonumber &\partial_\theta ( \Gamma (\theta)h_\theta(z))\\
\nonumber &= \sum_{i = 0}^\infty\frac{(\theta)_i}{(2)_i i!} z^i \Big[ \left(\ln z + \Psi(\theta + i) - \Psi(1 + i) - \Psi(2+i)\right)\left(1+(\theta-1)(\Psi (\theta+i)-\Psi (\theta))\right)+(\theta-1)\partial_\theta \Psi (\theta+i) \Big]\\
\nonumber &= -\frac{n}{\tilde \alpha^2}\sum_{i = 0}^{n-1}\frac{ (1-n)_i}{(2)_i i!} z^i+\Oc( 1)= \frac{1}{\tilde \alpha^2}\sum_{i = 1}^{n}2^{i-1}c_{n,i}\hat d_i z^{i-1}+\Oc( 1)
\end{align}
so that from \fref{idhthetaext1}:
\begin{align}
\nonumber &\partial_{\theta}\left(\tilde \alpha \Gamma (\theta)h_\theta(z)\right)= \Gamma (\theta)h_\theta(z)+\tilde \alpha \partial_\theta ( \Gamma (\theta)h_\theta(z))\\
\nonumber =&\frac 1z+\sum_{i=1}^{n} 2^{i-1} c_{n,i} z^{i-1}\left( \hat d_{i} \left( \ln z+\ln 2 -e_n-\frac{1}{\tilde \alpha}\right) +2 d_{i} \right)+\Oc( |\tilde \alpha|) +\frac{1}{\tilde \alpha}\sum_{i = 1}^{n}2^{i-1}c_{n,i}\hat d_i z^{i-1}+\Oc( |\tilde \alpha|)\\
\label{idhthetaext3} =&\frac 1z+\sum_{i=1}^{n} 2^{i-1} c_{n,i} z^{i-1}\left( \hat d_{i} \left( \ln z+\ln 2 -e_n\right) +2 d_{i} \right)+\Oc( |\tilde \alpha|),
\end{align}
and similarly
\begin{align}
\nonumber &\partial_{\theta} \left(z\pa_z \Gamma (\theta)h_\theta(z)\right)\\
\nonumber =&\sum_{i = 0}^\infty\frac{(\theta)_i}{(2)_i i!} z^i \Big[i \left(\left(\ln z + \Psi(\theta + i) - \Psi(1 + i) - \Psi(2+i)\right)\left(1+(\theta-1)(\Psi (\theta+i)-\Psi (\theta))\right)+(\theta-1)\partial_\theta \Psi (\theta+i)\right) +1\Big]\\
\nonumber=& -\frac{n}{\tilde \alpha^2}\sum_{i=0}^{n-1} \frac{(1-n)_iiz^i}{(2)_ii!}+\Oc( 1)= \frac{1}{\tilde \alpha^2}\sum_{i=1}^{n} 2^{i-1}(i-1)z^{i-1}c_{n,i}\hat d_i+\Oc( 1)
\end{align}
so that from \fref{idhthetaext2}, we get
\begin{align}
\nonumber &\partial_{\theta}\left(\tilde \alpha z\partial_z\Gamma (\theta)h_\theta(z)\right)= z\partial_z\Gamma (\theta)h_\theta(z)+\tilde \alpha \partial_\theta ( z\partial_z \Gamma (\theta)h_\theta(z))\\
\label{idhthetaext4} & \qquad = - \frac{1}{z} + \sum_{i = 1}^{n} 2^{i-1} c_{n,i} z^{i-1} \Big[ (i-1)\left(\hat d_i\left(\ln z +\ln 2 -e_n\right)+2d_i\right) +\hat d_i\Big] +\Oc( |\tilde \alpha|).
\end{align}

Therefore we obtain from \fref{idhthetaext1}, \fref{bd:mathgexterieur}, as $z=\zeta^2/2$ and $\tilde \alpha=1/\ln b+\Oc( |\ln b|^{-2})$:
\begin{align}
\nonumber q (z_0)&=\frac{2}{\zeta_0^2}+\sum_{i=1}^{n} 2^{i-1} c_{n,i} \frac{\zeta_0^{2(i-1)}}{2^{i-1}}\left( \hat d_{i} \left( \ln \left(\frac{\zeta_0^2}{2}\right)+\ln 2 -e_n-\frac{1}{\tilde \alpha}\right) +2 d_{i} \right)+\Oc( |\tilde \alpha|)+\Oc( b^{\frac 12})\\
\label{id:exprqz0}&=-\frac{1}{\tilde \alpha}H_n(\zeta_0)+2K_n(\zeta_0)-e_nH_n(\zeta_0)+\Oc( |\tilde \alpha|),
\end{align}
where $H_n$ and $G_n$ are given by \fref{def:HnGn}. Similarly, we compute from \fref{idhthetaext2} and \fref{bd:mathgexterieur},
\begin{align*} 
 (z\pa_z)q (z_0) & =- \frac{2}{\zeta_0^2} + \sum_{i = 1}^{n} 2^{i-1} c_{n,i} \left(\frac{\zeta_0^2}{2}\right)^{i-1} \Big[ (i-1)\left(\hat d_i\left(\ln \left(\frac{\zeta_0^2}{2}\right) +\ln 2 -e_n-\frac{1}{\tilde \alpha}\right)+2d_i\right) +\hat d_i\Big]\\
 & \qquad \qquad  +\Oc( |\tilde \alpha|)+\Oc( b^{\frac 12})\\
&=-\frac{1}{2\tilde \alpha} \zeta \partial_{\zeta}H_n(\zeta_0)+ \zeta \partial_\zeta K_n(\zeta_0)-\frac{e_n}{2} \zeta \partial_{\zeta}H_n(\zeta_0)+\Oc( |\tilde \alpha|).
\end{align*}
From \fref{bd:mathgexterieur}, \fref{idhthetaext3}, \fref{idhthetaext4}, recalling that $b$ and $\bar \alpha$ are two independent parameters for the moment, using the relations $b\partial_b \theta= -1/|\ln b|^2=\Oc( 1/|\ln b|^2)$ and $\pa_{\bar \alpha}=\partial_{\theta}$:
\be \label{id:bpabqz0}
b\partial_b \left(\tilde \alpha q (z_0)\right)= \Oc( \frac{1}{|\ln b|^2}) \partial_\theta (\tilde \alpha \Gamma (\theta)h(\theta)(z_0))+\Oc( b^{\frac 32})=\Oc( \tilde \alpha^2),
\ee
\be \label{id:bpabzpazqz0}
b\partial_b \left(\tilde \alpha z\partial_z q(z_0)\right)= \Oc( \frac{1}{|\ln b|^2})\partial_\theta (\tilde \alpha z\partial_z\Gamma (\theta)h(\theta)(z_0))+\Oc( b^{\frac 32})=\Oc( \tilde \alpha^2),
\ee
\begin{align} 
\nonumber \partial_{\bar \alpha}\left(\tilde \alpha q(z_0)\right) &=\partial_\theta \left( \tilde \alpha q(z_0)\right)\\
\nonumber &  =\frac{2}{\zeta_0^2}+\sum_{i=1}^{n} 2^{i-1} c_{n,i} (\frac{\zeta_0^2}{2})^{i-1}\left( \hat d_{i} \left( \ln (\frac{\zeta_0^2}{2})+\ln 2 -e_n\right) +2 d_{i} \right)+\Oc( |\tilde \alpha|)+\Oc( b^{\frac 12})\\
\label{id:paalphaqz0} &=2K_n(\zeta_0)-e_nH_n(\zeta_0)+\Oc( |\tilde \alpha|),
\end{align}
and 
\begin{align} 
\nonumber \partial_{\bar \alpha}\left(\tilde \alpha z\partial_z q(z_0)\right) &=\partial_\theta \left( \tilde \alpha z\partial_z q(z_0)\right) \\
\nonumber & = - \frac{2}{\zeta_0^2} + \sum_{i = 1}^{n} 2^{i-1} c_{n,i} (\frac{\zeta_0^2}{2})^{i-1}\Big[ (i-1)\left(\hat d_i\left(\ln (\frac{\zeta_0^2}{2}) +\ln 2 -e_n\right)+2d_i\right) +\hat d_i\Big] +\Oc( |\tilde \alpha|)\\
\label{id:paalphazpazqz0} & = \zeta \partial_\zeta K_n(\zeta_0)-\frac{e_n}{2}\zeta \partial_{\zeta}H_n(\zeta_0)+\Oc( |\tilde \alpha|).
\end{align}
We deduce that for $n\geq 2$, 
\begin{align}
\nonumber  \frac{z\partial_z q(z_0)}{q(z_0)} &=\frac{-\frac{1}{2\tilde \alpha} \zeta \partial_{\zeta}H_n(\zeta_0)+ \zeta \partial_\zeta K_n(\zeta_0)-\frac{e_n}{2}\zeta \partial_{\zeta}H_n(\zeta_0)+\Oc( |\tilde \alpha|)}{-\frac{1}{\tilde \alpha}H_n(\zeta_0)+2K_n(\zeta_0)-e_nH_n(\zeta_0)+\Oc( |\tilde \alpha|)}\\
\nonumber &= \frac 12 \frac{ \zeta \partial_{\zeta}H_n(\zeta_0)-2\tilde \alpha \zeta \partial_\zeta K_n(\zeta_0)+\tilde \alpha e_n \zeta \partial_{\zeta}H_n(\zeta_0)+\Oc( |\tilde \alpha|^2)}{H_n(\zeta_0)-2\tilde \alpha K_n(\zeta_0)+\tilde \alpha e_nH_n(\zeta_0)+\Oc( |\tilde \alpha|^2)}\\
\nonumber &= \frac 12 \left( \frac{\zeta \partial_\zeta H_n(\zeta_0)}{H_n(\zeta_0)}+\tilde \alpha \frac{(e_n\zeta\partial_\zeta H_n-2\zeta \partial_\zeta K_n)H_n-(e_nH_n-2K_n)\zeta \partial_\zeta H_n}{H_n^2(\zeta_0)}\right)+\Oc( \tilde \alpha^2) \\
\label{id:extmatching1} &= \frac 12 \frac{\zeta \partial_\zeta H_n(\zeta_0)}{H_n(\zeta_0)}+\tilde \alpha \frac{K_n(\zeta_0)\zeta \partial_\zeta H_n(\zeta_0)-\zeta \partial_\zeta K_n(\zeta_0)H_n(\zeta_0)}{H_n^2(\zeta_0)}+\Oc( \tilde \alpha^2) 
\end{align}
and similarly from \fref{id:bpabqz0}, \fref{id:paalphaqz0}, \fref{id:paalphaqz0} and \fref{id:paalphazpazqz0},
\begin{align}
\nonumber b\partial_b \left(\frac{z\partial_z q(z_0)}{q(z_0)} \right)&=b\partial_b \left( \frac{\tilde \alpha z\partial_z q(z_0)}{\tilde \alpha q(z_0)} \right)\\
\label{id:extmatching2} &= \frac{b\partial_b(\tilde \alpha z\partial_z q(z_0))\tilde \alpha q(z_0)-\tilde \alpha z\partial_z q (z_0)b\partial_b (\tilde \alpha q(z_0))}{\tilde \alpha^2q(z_0)^2}=\frac{\Oc( \tilde \alpha^2)}{\tilde \alpha^2 q(z_0)^2}=\Oc( \tilde \alpha^2),
\end{align}
\begin{align}
\nonumber &\partial_{\tilde \alpha} \left(\frac{z\partial_z q(z_0)}{q(z_0)} \right)= \partial_{\tilde \alpha} \left(\frac{\tilde \alpha  z\partial_z q(z_0)}{\tilde \alpha q(z_0)} \right) =\frac{\partial_{\tilde \alpha}(\tilde \alpha z\partial_z q(z_0)) \tilde \alpha q(z_0)-\partial_{\tilde \alpha}(\tilde q(z_0))\tilde \alpha z\partial_zq(z_0)}{\tilde \alpha^2 q^2(z_0)}\\
\label{id:extmatching3} &=\frac{K_n(\zeta_0)\zeta\partial_\zeta H_n(\zeta_0) -\zeta \partial_\zeta K(\zeta_0)H(\zeta_0)}{ H_n^2(\zeta_0)}+\Oc( |\tilde \alpha|).
\end{align}
\underline{The case $n=1$:} For $n=1$, $\theta=\tilde \alpha$, so we refine further $\tilde \alpha$ and take
$$
\tilde \alpha=\frac{1}{\ln b}+\frac{e_1}{|\ln b|^2}+\hat \alpha, \quad e_1=\ln 2 -\gamma-1=\ln 2 -\Psi (2)-2\gamma, \quad \hat \alpha=\Oc( |\ln b|^{-3}).
$$
We then refine further $\Gamma(\theta)h_{\theta}$ by noticing that for $i\geq 1$, $(\tilde \alpha)_i=\tilde \alpha \Gamma (i)+\Oc( \tilde \alpha^2)$ and $\Psi (\tilde \alpha)=-\tilde \alpha^{-1}-\gamma+\pi^2\tilde \alpha /6+\Oc( \tilde \alpha^2)$,
\begin{align*}
 \Gamma (\Theta) h_\theta(z) &=  \frac{1}{z} +(\tilde \alpha-1)\sum_{i = 0}^\infty\frac{(\tilde \alpha)_i}{(2)_i i!} z^i \Big[\ln z + \Psi(\tilde \alpha + i) - \Psi(1 + i) - \Psi(2+i) \Big]\\
&= \frac{1}{z}+(\tilde \alpha-1) \Big[\ln z + \Psi(\tilde \alpha ) - \Psi(1 ) - \Psi(2) \Big]\\
& \qquad   - \tilde \alpha \sum_{i = 1}^{\infty}\frac{\Gamma (i)}{(2)_i i!} z^i \Big[\ln z + \Psi( i) - \Psi(1 + i) - \Psi(2+i) \Big] +\Oc( \tilde \alpha^2) \\
 &= \frac{1}{z}+(\tilde \alpha-1)\Big[\ln z -\frac{1}{\tilde \alpha}+\tilde \alpha \frac{\pi^2}{6}- \Psi(2) \Big] \\
 & \qquad   - \tilde \alpha \sum_{i = 1}^{\infty}\frac{\Gamma (i)}{(2)_i i!} z^i \Big[\ln z + \Psi( i) - \Psi(1 + i) - \Psi(2+i) \Big] +\Oc( \tilde \alpha^2) \\
&=\frac{1}{\tilde \alpha}+ \frac{1}{z}-[\ln z +\gamma] \\
& \qquad +\tilde \alpha \left(\ln z -\Psi (2)-\frac{\pi^2}{6} - \sum_{i = 1}^{\infty}\frac{\Gamma (i)}{(2)_i i!} z^i \Big[\ln z + \Psi( i) - \Psi(1 + i) - \Psi(2+i) \Big]\right) +\Oc( \tilde \alpha^2) .
\end{align*}
With this, a further refinement of \fref{id:hthetamatching} with the same computation as above yields in this case, using \fref{def:J1matching},
$$
q(z_0)=-\frac{1}{\tilde \alpha}H_1(\zeta_0)+2K_1(\zeta_0)-e_1H_1(\zeta_0)+\tilde \alpha (J_1-2-\frac{\pi^2}{6})(\zeta_0)+\Oc( |\tilde \alpha|^2),
$$
$$
z\partial_z q(z_0)= \zeta \partial_\zeta K_1(\zeta_0)+\frac{\tilde \alpha}{2}\zeta \partial_\zeta J_1(\zeta_0)+\Oc( \tilde \alpha^2),
$$
$$
\partial_{\bar \alpha} (z\partial_z q(z_0))=1-\sum_{i=1}^\infty \frac{\Gamma (i)i}{(2)_ii!2^i}\zeta_0^{2i}[2\ln \zeta_0-\ln 2 -\Psi (2+i)]+O\left(\frac{1}{|\ln b|}\right).
$$
Hence, combining these identities with the previous estimates, and using $H_1=\ln b/2+\Oc( 1)$, we obtain
\begin{align}
\nonumber \frac{z\partial_z q(z_0)}{q(z_0)}&=\frac{ \zeta \partial_\zeta K_n(\zeta_0)+\tilde \alpha \zeta \partial_\zeta J_1(\zeta_0)+\Oc( \tilde \alpha^2)}{-\frac{1}{\tilde \alpha}H_1(\zeta_0)+2K_1(\zeta_0)-e_1H_1(\zeta_0)+\tilde \alpha (J_1-2-\frac{\pi^2}{6})(\zeta_0)+\Oc( |\tilde \alpha|^2)}\\
\nonumber &=\tilde \alpha \frac{ \zeta \partial_\zeta K_n(\zeta_0)+\tilde \alpha \zeta \partial_\zeta J_1(\zeta_0)+\Oc( \tilde \alpha^2)}{-H_1(\zeta_0)+2\tilde \alpha K_1(\zeta_0)-\tilde \alpha e_1H_1(\zeta_0)+\tilde \alpha^2 (J_1-2-\frac{\pi^2}{6})(\zeta_0)+\Oc( |\tilde \alpha|^3)} \\
\nonumber &=\tilde \alpha \left[-\frac{\zeta \partial_\zeta K_1(\zeta_0)}{H_1(\zeta_0)}+\tilde \alpha \frac{\zeta \partial_\zeta K_1(\zeta_0)(e_1H_1(\zeta_0)-2K_1(\zeta_0))-\zeta \partial_\zeta J_1(\zeta_0)H_1(\zeta_0)}{H_1^2(\zeta_0)}\right]+\Oc( \tilde \alpha^3).
\end{align}
We now use the expansion $\tilde \alpha=1/\ln b+e_1/(\ln b)^2+\hat \alpha$ to derive
\begin{align}
\nonumber \frac{z\partial_z q(z_0)}{q(z_0)}&=-\frac{1}{\ln b}\frac{\zeta \partial_\zeta K_1}{H_1}\\
& \qquad -\frac{1}{|\ln b|^2}\frac{2\zeta \partial_\zeta K_1(\zeta_0) K_1(\zeta_0))+\zeta \partial_\zeta J_1(\zeta_0)H_1(\zeta_0)}{H_1^2(\zeta_0)}  -\hat \alpha \frac{\zeta \partial_\zeta K_1}{H_1}+\Oc( |\ln b|^{-3}),\label{id:refinedexteriorn=1}
\end{align}
and 
\bee
\partial_{\bar \alpha}\left(\frac{z\partial_z q(z_0)}{q(z_0)}\right) &=& -\frac{\zeta \partial_\zeta K_1(\zeta_0)}{H_1(\zeta_0)}+\Oc( \tilde \alpha^2), \quad \partial_{b}\left(\frac{z\partial_z q(z_0)}{q(z_0)}\right) =\Oc( |\tilde \alpha|^2).
\eee

\underline{The case $n=0$:}  For $n=0$, $\theta=1+\tilde \alpha$. We then refine further $\Gamma(\theta)h_{\theta}$ by noticing that for $i\geq 0$, $(1+\tilde \alpha)_i=(1)_i+\Oc( |\tilde \alpha|)$ and $\Psi (1+\tilde \alpha+i)=\Psi (1+i)+\Oc( |\tilde \alpha|)$,
\begin{align*}
\Gamma (\Theta) h_\theta(z) &=   \frac{1}{z} + \tilde \alpha \sum_{i = 0}^\infty\frac{(1+\tilde \alpha)_i}{(2)_i i!} z^i \Big[\ln z + \Psi(1+\tilde \alpha+ i) - \Psi(1 + i) - \Psi(2+i) \Big]\\
&=   \frac{1}{z} + \tilde \alpha \sum_{i = 0}^\infty\frac{(1)_i}{(2)_i i!} z^i \Big[\ln z - \Psi(2+i) \Big]+\Oc( |\tilde \alpha|^2)
\end{align*}
With this, performing the same computations as the previous ones and using $\tilde \alpha =1/\ln b+\Oc( |\ln b|^{-2})$, we obtain
$$
q(z_0)=\frac{2}{\zeta_0^2}+\tilde \alpha \left(J_0(\zeta_0)+(\gamma-\ln 2)\tilde G_0(\zeta_0)\right)+\Oc( \tilde \alpha^2),
$$
$$
z\partial_z q(z_0)= -\frac{2}{\zeta_0^2}+\frac{\tilde \alpha}{2}\left(\zeta \pa_\zeta J_0(\zeta_0)+(\gamma-\ln 2)\zeta \pa_\zeta \tilde G_0(\zeta_0)\right)+\Oc( \tilde \alpha^2),
$$
where $J_0$ and $\tilde G_0$ are defined in \fref{def:J0matching} and \fref{def:tildeG0matching}, and
$$
\partial_{\tilde \alpha} (q(z_0))= J_0(\zeta_0)-1+(\gamma-\ln 2)\tilde G_0(\zeta_0)+\Oc( |\tilde \alpha|),
$$
$$ \partial_{\tilde \alpha} (z\pa_z q(z_0))= \frac 12 \zeta \pa_\zeta J_0(\zeta_0)+\frac{\gamma-\ln 2}{2}\zeta \pa_\zeta \tilde G_0(\zeta_0) +\Oc( |\tilde \alpha|^2).
$$
Hence, using $\pa_b \tilde \alpha=-1/b|\ln b|^2$, we obtain
\begin{align}
\nonumber \frac{z\partial_z q(z_0)}{q(z_0)}&=\frac{-\frac{2}{\zeta_0^2}+\frac{\tilde \alpha}{2}\left(\zeta \pa_\zeta J_0(\zeta_0)+(\gamma-\ln 2)\zeta \pa_\zeta \tilde G_0(\zeta_0)\right)+\Oc( \tilde \alpha^2)}{\frac{2}{\zeta_0^2}+\tilde \alpha \left(J_0(\zeta_0)+(\gamma-\ln 2)\tilde G_0(\zeta_0)\right)+\Oc( \tilde \alpha^2)}\\
\label{id:refinedexteriorn=0} & =-1+\tilde \alpha \zeta_0^2\left(\frac 14 \zeta \pa_\zeta J_0+\frac{\gamma-\ln 2}{4}\zeta \pa_\zeta \tilde G_0(\zeta_0)+\frac 12 J_0 +\frac{\gamma-\ln 2}{2}\tilde G_1(\zeta_0)\right)+\Oc( \tilde \alpha^2),
\end{align}
\begin{align}
\label{id:refinedexteriorn=0pab} \pa_{\bar \alpha}\left(\frac{z\partial_z q(z_0)}{q(z_0)}\right)&=\zeta_0^2\left(\frac 14 \zeta \pa_\zeta J_0+\frac{\gamma-\ln 2}{4}\zeta \pa_\zeta \tilde G_0(\zeta_0)+\frac 12 J_0+\frac{\gamma-\ln 2}{2}\tilde G_1(\zeta_0)\right)+\Oc( |\tilde \alpha|),
\end{align}
\be \label{id:refinedexteriorn=0paalpha}
\pa_{b}\left(\frac{z\partial_z q(z_0)}{q(z_0)}\right)=\Oc\left(\frac{1}{|\ln b|^2}\right).
\ee

\noindent \textbf{Step 3} \emph{Existence of $\tilde \alpha_n$, proof of \fref{est:nu0ntil} and \fref{est:nurefinedn=01}}. We first prove the existence and the bound for $\tilde \alpha_n$, and then prove a bound for $\pa_b \tilde \alpha_n$. From \fref{def:Thetaeps0}, \fref{est:phin0D1}, \fref{id:extmatching1} we arrive at the following. \\

\noindent \underline{The case $n\geq 2$}. In this case, we have 
\begin{align*}
\Theta(b, \bar{\alpha}) &= \frac{\zeta \partial_\zeta H_n(\zeta_0)}{2H_n(\zeta_0)}+\frac{1}{\ln b}\frac{K_n(\zeta_0)\zeta \partial_\zeta H_n(\zeta_0)-H_n(\zeta_0)\zeta\partial_\zeta K_n(\zeta_0)}{H_n^2(\zeta_0)}+\bar \alpha \frac{\Oc( 1)}{H_n(\zeta_0)^2}+\Oc( |\ln b|^{-2})\\
&-\frac 12 \frac{\zeta \partial_\zeta H_n(\zeta_0)}{H_n(\zeta_0)}-\tilde \alpha \frac{K_n(\zeta_0)\zeta \partial_\zeta H_n(\zeta_0)-\zeta \partial_\zeta K_n(\zeta_0)H_n(\zeta_0)}{H_n^2(\zeta_0)}+\Oc( \tilde \alpha^2) \\
&=\bar \alpha \frac{-K_n(\zeta_0)\zeta \partial_\zeta H_n(\zeta_0)+\zeta \partial_\zeta K_n(\zeta_0)H_n(\zeta_0)+\Oc( 1)}{H_n^2(\zeta_0)}+\Oc( |\ln b|^{-2}) \\
\end{align*}
where the constant in the $\Oc( 1)$ is independent of $\zeta_0$, and the constant in the $\Oc( |\ln b|^{-2}) $ is independent of $\bar \alpha$. We compute for $n\geq 1$ from \fref{def:Hn} the nondegeneracy for $\zeta_0$ small enough, as $\hat d_1=-1/2$ and $c_{n,1}=2n$:
\begin{align}
\nonumber  -K_n \zeta \partial_\zeta H_n& +\zeta \partial_\zeta K_nH_n \\
\nonumber &=-\left(\frac{1}{\zeta_0^2}+\sum_{i=1}^{n} c_{n,i} \zeta_0^{2(i-1)}\left( \hat d_{i}  \ln \zeta_0 + d_{i} \right)\right)\left(\sum_{i=1}^{n}  2(i-1)c_{n,i} \hat d_i\zeta_0^{2(i-1)}\right)\\
\nonumber &+\left(\frac{-2}{\zeta_0^2}+\sum_{i=1}^{n} c_{n,i} \zeta_0^{2(i-1)}\left( 2(i-1)(\hat d_{i}  \ln \zeta_0 + d_{i})+\hat d_i\right)\right)\left(\sum_{i=1}^{n} c_{n,i} \hat d_i\zeta_0^{2(i-1)}\right)\\
 &=-\left(\frac{1}{\zeta_0^2}+\Oc( |\ln \zeta_0|)\right) \left(\Oc( \zeta_0^2)\right)+\left(\frac{-2}{\zeta_0^2}+\Oc( 1)\right)\left(-n+\Oc( \zeta_0^2)\right) =\frac{2n}{\zeta_0^2}+\Oc( 1). \label{id:matchingnondegeneracy}
\end{align}
So that, as $H_n(\zeta_0)=-n+\Oc( \zeta_0^2)$ we arrive at:
$$
\Theta(b, \bar{\alpha}) =\bar \alpha \left(\frac{2}{n \zeta_0^2}+\Oc( 1)\right)+\Oc( |\ln b|^{-2}).
$$
An application of the intermediate value theorem then yields that there exists at least one value $\bar \alpha=\bar \alpha_n=\Oc( |\ln b|^{-2})$ such that $\Theta(b,\bar \alpha)=0$. \\

\noindent \underline{The case $n=1$}. We obtain from the refined identities \fref{id:refinedinteriorn=1} and \fref{id:refinedexteriorn=1}:
\bee
\Theta & = & -\frac{1}{\ln b} \frac{\zeta\partial_\zeta K_1}{H_1}-\frac{1}{\ln b^2}\frac{\zeta \partial_\zeta J_1H_1+2K_1\zeta \partial_\zeta K_1}{H_1^2}+\tilde \alpha \frac{\Oc( 1)}{H_1^2}+\Oc( |\ln b|^{-3}) \\
&&-\left(-\frac{1}{\ln b}\frac{\zeta \partial_\zeta K_1}{H_1}-\frac{1}{|\ln b|^2}\frac{2\zeta \partial_\zeta K_1(\zeta_0) K_1(\zeta_0))+\zeta \partial_\zeta J_1(\zeta_0)H_1(\zeta_0)}{H_1^2(\zeta_0)}  -\hat \alpha \frac{\zeta \partial_\zeta K_1}{H_1}+\Oc( |\ln b|^{-3})\right)\\
&=& \hat \alpha  \frac{\zeta \partial_\zeta K_1+\Oc( 1)}{(H_1)^2}+O\left(\frac{1}{|\ln b|^3}\right).
\eee
From the nondegeneracy \fref{id:matchingnondegeneracy}, an application of the intermediate value Theorem yields that there exists at least one value $\hat \alpha=\hat \alpha_1=\Oc( |\ln b|^{-3})$ such that $\Theta=0$.\\

\noindent \underline{The case $n=0$}. We obtain from the identities \fref{id:refinedinteriorn=0} and \fref{id:refinedexteriorn=0}, injecting $\tilde \alpha=1/\ln b+e_0/(\ln b)^2+\hat \alpha$ with $e_0=\ln 2-\gamma$ and $\hat \alpha=\Oc( |\ln b|^{-3})$:
\bee
\Theta&=&-1+\frac{1}{2\ln b}\zeta_0^2(\frac 12 \zeta \pa_\zeta J_0+J_0)-\frac{\ln b}{4} \bar \alpha \zeta_0^2(\zeta \pa_\zeta \tilde G_0(\zeta_0)+2\tilde G_0(\zeta_0)+\Oc( |\ln b|^{-1}))+\Oc( |\ln b|^{-2})\\
&&-\left(-1+\tilde \alpha \zeta_0^2\left(\frac 14 \zeta \pa_\zeta J_0+\frac{\gamma-\ln 2}{4}\zeta \pa_\zeta \tilde G_0(\zeta_0)+\frac 12 J_0 +\frac{\gamma-\ln 2}{2}\tilde G_1(\zeta_0)\right)+\Oc( \tilde \alpha^2)\right)\\
&=&-\frac{\ln b}{4} \bar \alpha \zeta_0^2(\zeta \pa_\zeta \tilde G_0(\zeta_0)+2\tilde G_0(\zeta_0)+\Oc( |\ln b|^{-1}))+\Oc( |\ln b|^{-2})\\
&&+ \frac{\ln 2-\gamma}{4\ln b} \zeta_0^2\left( \zeta \pa_\zeta \tilde G_0(\zeta_0) +2\tilde G_0(\zeta_0)\right)\\
&&-\left(\bar \alpha \zeta_0^2\left(\frac 14 \zeta \pa_\zeta J_0+\frac{\gamma-\ln 2}{4}\zeta \pa_\zeta \tilde G_0(\zeta_0)+\frac 12 J_0 +\frac{\gamma-\ln 2}{2}\tilde G_0(\zeta_0)\right)\right)\\
&=&-\frac{\ln b}{4} \hat \alpha \zeta_0^2(\zeta \pa_\zeta \tilde G_0(\zeta_0)+2\tilde G_0(\zeta_0)+\Oc( |\ln b|^{-1}))+\Oc( |\ln b|^{-2}).
\eee
Therefore, as $\zeta \pa_\zeta \tilde G_0(\zeta_0)+2\tilde G_0(\zeta_0)\neq 0$ for $\zeta_0$ small enough, an application of the implicit function Theorem gives the existence of $\hat \alpha=\hat \alpha_0=\Oc( |\ln b|^{-3})$ such that $\Theta(b,\hat \alpha_0)=0$.

\noindent \underline{Estimate of $\partial_b \tilde{\alpha}_n$:} We estimate for $n\geq 1$ from \fref{est:phin0D1pab}, \fref{est:phin0D1pab}, \fref{id:extmatching2}, \fref{id:extmatching3} and \fref{id:matchingnondegeneracy},
$$
\pa_b \Theta =\pa_b\left(\frac{r\pa_r \phi_n^\inn (R_0))}{2\phi_n^\inn (R_0)}\right) -\pa_b\left(\frac{z\pa_z q(z_0)}{q(z_0)}\right)=\Oc( b^{-1}|\ln b|^{-2}),
$$
and 
\begin{align*}
\pa_{\bar \alpha} \Theta &=\pa_{\bar \alpha }\left(\frac{r\pa_r \phi_n^\inn (R_0))}{2\phi_n^\inn (R_0)}\right) -\pa_{\bar \alpha }\left(\frac{z\pa_z q(z_0)}{q(z_0)}\right)\\
& =\frac{-K_n(\zeta_0)\zeta \partial_\zeta H_n(\zeta_0)+\zeta \partial_\zeta K_n(\zeta_0)H_n(\zeta_0)+\Oc( 1)}{H_n^2(\zeta_0)}=\frac{2}{n\zeta_0^2}+\Oc( 1).
\end{align*}
Therefore, differentiating the fixed point relation $\Theta(b,\bar \alpha(b))=0$ gives $\pa_{b}\bar \alpha \pa_{\bar \alpha}\Theta=-\pa_b \Theta$, so $|\partial_b \bar{\alpha}_n| = \left|\frac{\partial_b \Theta}{\partial_{\tilde{\alpha}_n}\Theta} \right| = \Oc\left(\frac{1}{b |\ln b|}\right)$ which concludes the proof of \fref{est:nu0ntil} for $n\geq 1$. For  $n=0$ the very same computation yields the same estimate, using \fref{id:refinedinteriorn=0pab}, \fref{id:refinedinteriorn=0paalpha}, \fref{id:refinedexteriorn=0pab} and \fref{id:refinedexteriorn=0paalpha}.\\

\noindent \textbf{Step 4:} \emph{Proof of the refined pointwise estimate \fref{bd:refinedtildephin}}. Recall $\tilde \phi_n$ is defined by \fref{id:deftildephin}. By \eqref{def:Mn}, we estimate $\tilde \phi$ in two zones, $r\leq R_0$ and $r\geq R_0$.\\
- For $r \leq R_0$, We write from \eqref{eq:forMinn}:
$$
\tilde{\phi}_{n} = b\left(-\frac{2}{\ln b}T_1+\As_0^{-1}\Theta_0\right)(r)+ 2\bar{\alpha}\sum_{j = 0}^n b^{j + 1}\Big(-c_{n,j}T_{j + 1}(r) + S_{j}(r)\Big)  + b\Rc_{n}(r).
$$
Then, the estimates \fref{bd:improveddecayTheta0T0}, \fref{est:T0iatinf}, \eqref{est:S0j}, \fref{id:refinementSjn=0}, \fref{id:refinementSjn=1}, \eqref{est:R0n}, \fref{id:refinementRn=0} and \fref{id:refinementRn=1} imply that $\| \tilde \phi_n \|_{\mathcal I^0_{-1}}\lesssim b$ which means that for $r\leq R_0$:
\be \label{bd:tildephinintpointwise}
|\tilde{\phi}_{n}(r)|\lesssim b r^{2}\langle r \rangle^{-2}\left(1+2\frac{\ln (r+1)}{\ln b} \right) \lesssim \left| \begin{array}{l l} \frac{1}{|\ln b|}r^{2}\langle r\rangle^{-4} \qquad \mbox{on }[0,R_0],\\ br^2\langle r \rangle^{-2}\qquad \mbox{on }[0,R_0] \mbox{ as well.} \end{array} \right.
\ee
- For $r \geq R_0$, we switch to $\zeta= \sqrt{b} r$ variables and write from \eqref{def:Mn} and \eqref{eq:qn}:
\be \label{id:exprtildephinext}
\tilde{\phi}_{n}(\frac{\zeta}{\sqrt b}) = \beta_0 \big( \Gamma (\theta) h_{\theta_n} + \Gc \big)\big(\frac{\zeta^2}{4} \big) - \sum_{j = 0}^n c_{n,j}b^j T_j\big(\frac{\zeta}{\sqrt{b}}\big),
\ee
We first estimate the parameter $\beta_0$, which from \fref{def:Thetaeps0}, \fref{id:exprqz0} and \fref{id:exprphiinR0} is:
$$
\beta_0=\frac{\phi^\inn_n (R_0)}{\phi_n^\out (R_0)}=\frac{b\left(-\frac{\ln b}{2}H_n+K_n+O\left(|\ln b|^{-1}\right)\right)}{-\frac{1}{\tilde \alpha}H_n+2K_n-e_nH_n+O(|\ln b|^{-1})}
$$
We deduce from the above identity, using that $\tilde \alpha=(\ln b)^{-1}+O(|\ln b|^{-2})$:
$$
\beta_0=\frac b2+O\left(\frac{b}{|\ln b|} \right) \qquad \mbox{and} \qquad \frac{\beta_0}{\tilde \alpha}=\frac{b \ln b}{2}-\frac b2 e_n+O\left(\frac{b}{|\ln b|^2} \right).
$$
The $O(|\tilde \alpha |)$ remainder in \fref{idhthetaext1} can be bounded by an explicit weight on $[z_0,\infty)$, for example via the same perturbation argument as used in the proof of Lemma \ref{lemm:out0}. We do not repeat such an argument which shows that, since $\tilde \alpha=O(|\ln b|^{-1})$:
$$
\Gamma(\theta)h_\theta(z)=\frac 1z+\sum_{i=1}^{n} 2^{i-1} c_{n,i} z^{i-1}\left( \hat d_{i} \left( \ln z+\ln 2 -e_n-\frac{1}{\tilde \alpha}\right) +2 d_{i} \right)+\Oc\left( \frac{1}{|\ln b|}z^{n-1+\delta}\right)
$$
for any $\delta>0$. The two above identities then imply the identity for the first term in \fref{id:exprtildephinext}:
$$
\beta_0 \Gamma (\theta) h_{\theta_n}\left(\frac{\zeta^2}{2}\right)= \frac{b}{\zeta^2}+\frac{b}{2}\sum_{i=1}^n c_{n,i}\zeta^{2(i-1)}\left(\hat d_i(2\ln \zeta-\ln b)+2d_i \right)+\Oc\left( \frac{b}{|\ln b|}\zeta^{2n-2+\delta}\right).
$$
Next we turn to the third term in \fref{id:exprtildephinext}, which from \fref{est:T0iatinf} is for $\zeta \geq \zeta_0$:
$$
\sum_{j = 0}^n c_{n,j}b^j T_j\big(\frac{\zeta}{\sqrt{b}}\big)=\frac{b}{\zeta^2}+b\sum_{i=1}^n c_{n,i}\zeta^{2(i-1)}\left(\hat d_i (\ln \zeta-\frac{\ln b}{2})+d_i\right)+O(b^2|\ln b|^C\zeta^{2n-4}|\ln \zeta|^C)
$$
for some constant $C>0$. One thus has in \fref{id:exprtildephinext} a cancellation for the leading order terms, and combined with the estimate \fref{est:Gc} for $\mathcal G$ this yields:
\be \label{bd:tildephinextpointwise}
|\tilde{\phi}_{n}(\frac{\zeta}{\sqrt b}) |\lesssim \frac{b}{|\ln b|} \zeta^{2n-2+\delta}.
\ee
- Conclusion : Combining \fref{id:exprtildephinext} and \fref{bd:tildephinextpointwise}, recalling $r=\frac{\zeta}{\sqrt b}$ we see that:
$$
|\tilde{\phi}_{n}(r) |\lesssim \frac{1}{|\ln b|} r^2 \langle r \rangle^{-4}\langle \sqrt b r \rangle^{2n+\delta} \quad \mbox{and} \quad |\tilde{\phi}_{n}(r) |\lesssim b r^2 \langle r \rangle^{-2}\langle \sqrt b r \rangle^{2n+\delta},
$$
which is precisely the first bound in \fref{bd:refinedtildephin} with $k=0$. The first bound in \fref{bd:refinedtildephin} for $k=1,2$, and the second bound in \fref{bd:refinedtildephin} for $k=0,1,2$ are proved the exact same way, using that the bounds on the corrective terms \fref{est:S0j} and \fref{est:R0n}, and \fref{est:Gc} provide the desired control for $D_r$, $\pa_b$ and $\pa_\alpha$ derivatives, along with the estimate $b\pa_{b}\tilde \alpha =O(|\ln b|^{-2})$ that was proved in Step 3.
\end{proof}

\begin{proof}[Proof of Corollary \ref{coro:perturbedspectral}]
We claim that the same proof applies as for Lemma \ref{lemm:radialmode}. Indeed, notice that from Lemma \ref{lem:perturbationinner} and the bound \fref{lem:perturbationinner}, the inner solution for the perturbed problem is of the very same form as the original problem \fref{id:innerdecompositionmatching}:
$$
\phi^{\inn,V}_n[b,\bar \alpha](r) = F_n[b](r) + \bar{\alpha}b G_n[b,\bar \alpha](r) +E^V_n[b,\bar \alpha](r),
$$
where $E^V_n=E_n+\phi^{\inn,V}-\phi^{\inn}$ satisfies the analogue of \fref{bd:Enmatching}:
$$
\sum_{0\leq k \leq 2}^2| ((r \partial_r)^k E_n) (R_0)| \leq C(\zeta_0) \frac{b}{|\ln b|}.
$$
So all computations made for the inner solution of the original problem are also valid for the perturbed problem. Notice similarly from Lemma \ref{lem:perturbationouter} that the outer solution for the perturbed problem is of the very same form as that of the original problem:
$$
q^V_n[b,\bar \alpha](z)=\Gamma(\theta)h(\theta)+\Gc_n^V[b,\bar \alpha](z)
$$
where $\Gc$ satisfies the analogue of \fref{bd:mathgexterieur}:
$$
|\mathcal G_n^V(z_0))|+|(z\pa_z \mathcal G_n^V(z_0))| \lesssim b^{\frac 12}.
$$
So all computations made for the outer solution of the original problem are also valid for the perturbed problem. The matching procedure can thus be done verbatim the same way. The only informations that we do not get in comparison with the original problem are the estimates for the variation with respect to $\tilde \alpha$ and $b$, and the next order $|\ln b|^{-2}$ term in the expansion of $\tilde \alpha$ for $n=0,1$, but these informations are not required. This concludes the proof of the Corollary.
\end{proof}

\begin{proof}[Proof of Proposition \ref{pr:spectralbarAzeta}] The existence part and the estimates on the eigenvalues are direct consequences of Corollary \ref{coro:perturbedspectral}. The bound \fref{bd:stabilityeigenmodes} is a direct consequence of \fref{lem:perturbationinner} and \fref{lem:perturbationouter}.
\end{proof}

\section{Coercivity in the non-radial sector, Proof of Proposition \ref{pr:coercivitenonradial}}\label{sec:CoerNonR}
Our argument takes place on the stationary state variables:
$$
\Ls u =\Delta u-\nabla  \cdot (u \nabla \Phi_U)-\nabla \cdot (U\nabla \Phi_u)-b\nabla \cdot (yu), \quad  0<b = \nu^2 \beta \ll 1, \quad y=\frac{z}{\sqrt \beta \nu}.
$$
The operator $\Ls$ can be written in two different divergence forms
\begin{align}
\Ls u = \Ls_0 u -b\nabla \cdot (yu) \quad\mbox{or}\quad \Ls u = \Hs u - \nabla U \cdot \nabla \Phi_u, \label{def:LsformH}
\end{align}
where $\Ls_0$ is defined in \fref{def:Lsform0}, and $\Hs u = (\omega[b])^{-1}\nabla \cdot \Big(\omega [b]\nabla u\Big) + 2(U - b)u,$ with the weight functions (we will often forget about the $[b]$ dependance from now on in this section)
\begin{equation}
\omega = \omega[b] = \frac{\rho[b]}{U}, \quad \rho[b] (y)= e^{-\frac{b| y|^2} {2}}.
\end{equation}
In the first form in \fref{def:LsformH}, the $b\nabla \cdot (yu)$ term can be treated as a perturbation up to the zone $|y|\sim \sqrt{b}$. In the second, the term $\nabla U \cdot \nabla \Phi_u$ formally scales like "$|y|^{-4}u$" at infinity due to the rapid decay of $U$ and is expected to be of lower order there. The mixed scalar product \fref{def:quadform} is adapted to these two structures. We make a slight abuse of notations and keep the same notation for it in $y$ variables:
\begin{equation}\label{def:scalarMtil}
\la u,v\ra_\ast := \int_{\Rb^2} u \sqrt{\rho} \Ms\Big(v \sqrt{\rho} \Big) dy= \int_{\Rb^2} u \tilde{\Ms} v \rho dy,
\end{equation}
where $\tilde{\Ms}$ is the linear operator with a suitably truncated Poisson field:
\begin{equation}\label{def:Phiutil}
\tilde{\Ms}= \tilde{\Ms}[b]: u \mapsto \frac{u}{U} - \tilde{\Phi}_u, \qquad \tilde \Phi_u= \tilde \Phi[b]_u= -\frac{1}{\sqrt{\rho}}\left[\frac{1}{2\pi} \ln(|y|)*\left(u \sqrt{\rho}\right)\right].
\end{equation}
Note that $\tilde{\Ms} v = \sqrt{\rho}^{-1} \Ms\Big(v \sqrt{\rho} \Big)$ so there holds in particular the relations:
$$
-\Delta \left(\tilde \Phi_u \, \sqrt{\rho}\right)= u\sqrt{\rho} \quad \textup{and}\quad \Delta \tilde \Phi_u=-u+ b y \cdot \nabla \tilde \Phi_u+\left(b+\frac{b^2}{4}|y|^2\right)\tilde \Phi_u.
$$
We shall consider the operator $\tilde{\Ls}$ which is the operator $\tilde \Ls^z$ defined by \fref{def:tildeLz} expressed in $y$ variable:
$$
\tilde \Ls u :=\Delta u-\nabla  \cdot (u \nabla \Phi_{U})-\nabla \cdot (U \nabla \tilde \Phi_u)-b\nabla \cdot (yu),
$$
To prove Proposition \ref{pr:coercivitenonradial} is then equivalent to prove its analogue in $y$ variables:

\begin{proposition}\label{pr:coercivitenonradialbis}
There exists $c,C>0$ and $b^*>0$ such that for all $0<b\leq b^*$, if $u\in \dot H^1_{\omega[b]}$ satisfies $\int_{|y|=r}u=0$ for almost every $r>0$, then:
\begin{equation}\label{est:coerLnuybis}
\langle -\tilde \Ls u,u\rangle_\ast \geq c \| \nabla u \|_{L^2_\omega}^2 - C \left( \left(\int_{\Rb^2} u\partial_{y_1}U\sqrt{\rho}dy\right)^2+\left(\int_{\Rb^2} u\partial_{y_2}U\sqrt{\rho}dy\right)^2\right).
\end{equation}
\end{proposition}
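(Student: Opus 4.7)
The plan is to first exploit the divergence factorisation
\begin{equation*}
\tilde{\Ls} u = \nabla \cdot \bigl(U \nabla (\tilde{\Ms} u)\bigr) - b \nabla \cdot (y u),
\end{equation*}
which follows from $\nabla \Phi_U = \nabla \ln U$ and the definition of $\tilde{\Ms}$, and to rewrite $\langle -\tilde{\Ls} u, u\rangle_\ast$ as $-\int \tilde{\Ls} u\cdot \tilde{\Ms} u\,\rho\,dy$. Integration by parts, using $\nabla \rho=-by\rho$ and $\nabla\cdot(yU\rho) = (\Lambda U-bU|y|^2)\rho$, will reorganise this quantity as
\begin{equation*}
\langle -\tilde{\Ls} u, u\rangle_\ast = \int U|\nabla \tilde{\Ms} u|^2 \rho\, dy + \tfrac{b}{2}\int (\tilde{\Ms} u)^2 \Lambda U \rho\, dy - b\int y\cdot \nabla \tilde{\Ms} u \cdot u \rho\, dy + \mathcal{O}(b^2)\text{ corrections},
\end{equation*}
isolating a manifestly positive principal part and a list of lower-order terms carrying explicit powers of $b$.

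Second, I will turn the principal term into coercivity for $\|\nabla u\|_{L^2_\omega}^2$. Expanding $\nabla \tilde{\Ms} u = \nabla(u/U)-\nabla \tilde{\Phi}_u$ and using the pointwise identity $\nabla u = U\nabla(u/U) + u\nabla \ln U$ together with $|\nabla \ln U|^2/U = 2|y|^2$, one obtains $|\nabla u|^2/U \lesssim U|\nabla(u/U)|^2 + u^2|y|^2$, so that the principal term $\int U|\nabla \tilde{\Ms} u|^2\rho dy$ dominates $\|\nabla u\|_{L^2_\omega}^2$ modulo the lower order pieces $\int u^2|y|^2\rho dy$ and the non-negative Poisson contribution $\int U|\nabla \tilde{\Phi}_u|^2\rho dy$. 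To control these residual lower order quantities I will adapt, at the $H^1$ regularity level, the inner coercivity \eqref{bd:coercivite L2} of \cite{RSma14}, using that $\sqrt{\rho}\approx 1$ on $|y|\lesssim 1$ makes the difference $\Phi_u-\tilde{\Phi}_u$ harmless where $\Lambda U$ and $\partial_{y_i}U$ are concentrated. The non-radial assumption $\int_{|y|=r}u\, dS =0$ eliminates the $\Lambda U$ direction automatically, while the two translation modes $\partial_{y_i}U$ produce precisely the error terms in \eqref{est:coerLnuybis}.

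Third, in the outer region $|y|\gtrsim 1$ where $U\to 0$ and the principal term degenerates, the necessary coercivity will be provided by the scaling contribution $-b\nabla \cdot (yu)$: in this zone $\tilde{\Ms} u \approx u/U$ up to a small Poisson correction, and the cross term $-b\int y\cdot \nabla \tilde{\Ms} u\cdot u\rho\, dy$ reduces after an IBP to a Gaussian Dirichlet form proportional to $b\int |\nabla u|^2 \rho/U\, dy$, which is exactly $\|\nabla u\|_{L^2_\omega}^2$ weighted by the Ornstein-Uhlenbeck spectral gap. A $b$-independent cutoff $\chi(|y|\leq R)$ will glue the inner and outer bounds.

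The main obstacle will be the careful treatment of the $\mathcal{O}(b)$ and $\mathcal{O}(b^2)$ corrections: because the weight $|y|^2$ reaches size $1/b$ at the edge of the Gaussian support, naïve Cauchy--Schwarz is insufficient, and one must combine these terms with the principal term in order to reconstruct non-negative quadratic expressions on each of the two scales. Adapting the proof of \eqref{bd:coercivite L2} from \cite{RSma14,RR} to the $H^1$ level, while absorbing the difference $\Phi_u-\tilde{\Phi}_u$ using the Gaussian localisation provided by $\sqrt{\rho}$ in the definition of $\tilde{\Phi}_u$, is where the technical bulk of the proof will concentrate.
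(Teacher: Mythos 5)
There is a genuine gap in the proposed dominance argument. You claim that the principal term $\int U|\nabla\tilde\Ms u|^2\rho$ dominates $\|\nabla u\|^2_{L^2_\omega}$ ``modulo the lower order pieces $\int u^2|y|^2\rho\,dy$.'' But from the pointwise identity $\nabla u = U\nabla(u/U)+u\nabla\ln U$ (with $|\nabla\ln U|^2/U=2|y|^2$) one only gets $|\nabla u|^2/U \lesssim U|\nabla(u/U)|^2 + u^2|y|^2$, and the term $\int u^2|y|^2\rho$ is \emph{not} lower order: by the Hardy inequality \eqref{bd:hardyL2rho} it is \emph{comparable} to $\|\nabla u\|^2_{L^2_\omega}$, so the claimed dominance is circular. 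The paper's proof avoids this by not trying to estimate the principal term in isolation: it uses the exact algebraic cancellation
\begin{equation*}
\int U\left|\nabla\left(\frac{u}{U}\right)\right|^2\rho + b\int \frac{y\cdot\nabla\Phi_U}{U}u^2\rho
= \int\frac{|\nabla u|^2}{U}\rho + \int u^2\left(\nabla\cdot\frac{\nabla\Phi_U}{U}+\frac{|\nabla\Phi_U|^2}{U}\right)\rho
= \|\nabla u\|^2_{L^2_\omega} - \int u^2\rho,
\end{equation*}
where the last equality uses the pointwise identity $\nabla\cdot(\nabla\Phi_U/U)+|\nabla\Phi_U|^2/U=-1$. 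In this combination the $|y|^2$ growth is cancelled exactly (by the scaling-induced $b$-term in $F$, which you do not treat carefully), leaving a negative remainder $-\int u^2\rho$ that is genuinely subcritical; this is the decisive structural fact that your plan misses.

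There is also a secondary gap in the claimed divergence form: $\tilde\Ls u = \nabla\cdot(U\nabla\tilde\Ms u) - b\nabla\cdot(yu)$ is not exact, since $\tilde\Phi_u$ does not solve the Poisson equation but instead satisfies $\Delta\tilde\Phi_u = -u + by\cdot\nabla\tilde\Phi_u + (b+\tfrac{b^2}4|y|^2)\tilde\Phi_u$. The paper records the resulting lower-order terms as the bilinear form $G(u,v)$ and estimates them separately using the weighted Poisson estimates in the appendix; your proposal folds them into an unspecified ``$\mathcal{O}(b^2)$ corrections,'' which is not justified because they carry weights that can reach size $1/b$. Finally, the paper does not do a direct cutoff-gluing of inner and outer estimates. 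After the subcoercivity estimate with the residual error $\|u/(1+|y|^{3/2})\|_{L^2_\omega}$ and small $b^{1/4}\|\nabla u\|^2_{L^2_\omega}$ correction, it closes the argument by a compactness/contradiction scheme: take $b_n\to 0$ and a putative minimising sequence $u_n$, pass to a weak limit $f_\infty\in\dot H^1((1+|y|^4)dy)$ using the strong $L^2_{\rm loc}$ compactness, and use the rigidity $\Ms f_\infty = 0$ together with the orthogonality conditions and the non-radial assumption to deduce $f_\infty=0$, contradicting the non-vanishing implied by the subcoercivity. A direct gluing as you propose would face the difficulty that the orthogonality conditions needed for the inner $L^2$-coercivity of \cite{RSma14} are global and do not localize to a cutoff region, producing commutator errors without an obvious small parameter; this is precisely what the compactness argument sidesteps.
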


The proof is done in two parts: In the first part, we deal with the linear operator $\Ls_0$ and derive its coercivity under some suitable orthogonality conditions. Then, we extend this coercive property to the full linearized operator $\tilde \Ls$ where the scaling term $\nabla\cdot(yu)$ is taken into account.

\subsection{Coercivity of $\Ls_0$ in $\dot H^1$}

We recall that $\Ls_0$, at the $L^2$ level, satisfies the continuity estimate \fref{eq:contM} and the coercivity \fref{bd:coercivite L2} from \cite{RSma14}. We prove here a coercivity at the $\dot H^1$ level. While \cite{RSma14} proves a similar estimate at the $\dot H^2$ level, we state and prove in an analogous way the following result for the sake of completeness.
\begin{lemma} \label{lemm:coerML0} Let $u$ be such that $\int_{\Rb^2} u dy= 0$ and $ \nabla u \in L^2(U^{-1})$. Then, we have for some constants $\delta_2>0$ and $C>0$:
\be \label{bd:coercivite H1}
\int_{\Rb^2} U |\nabla (\Ms u)|^2 dy \geq  \delta_2\int_{\Rb^2} \frac{|\nabla u|^2}{U} dy - C\Big[\la u, \pa_1 U\ra_{L^2}^2 + \la u, \pa_2 U\ra_{L^2}^2 \Big].
\ee
\end{lemma}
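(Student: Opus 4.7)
The plan is to follow the energy strategy of Proposition 2.3 in \cite{RSma14}, which establishes the analogous bound at the $\dot H^2$ level, and to adapt it to the $\dot H^1$ setting. The idea is to reduce the $\dot H^1$ coercivity to the known $L^2$ one via an algebraic identity, integration by parts, and the continuity \fref{eq:contM} of the Poisson field.

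Concretely, I would first expand $\nabla \Ms u = U^{-1}\nabla u - u U^{-2}\nabla U - \nabla \Phi_u$ and multiply by $U$ to obtain the pointwise identity
\begin{equation*}
U|\nabla \Ms u|^2 = \frac{|\nabla u|^2}{U} - \frac{\nabla(u^2)\cdot \nabla U}{U^2} + \frac{u^2 |\nabla U|^2}{U^3} + U|\nabla \Phi_u|^2 - 2\nabla u \cdot \nabla \Phi_u + 2\frac{u\,\nabla U \cdot \nabla \Phi_u}{U}.
\end{equation*}
After integration, the $\nabla(u^2)\cdot \nabla U/U^2$ term is integrated by parts and combined with the $u^2 |\nabla U|^2/U^3$ term, producing a contribution of the form $\int u^2 w_1(y)\,dy$ for an explicit weight $w_1$. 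The Poisson-field terms are then handled using $\int U|\nabla \Phi_u|^2 \lesssim \int u^2/U$, which follows from \fref{eq:contM}, together with Cauchy--Schwarz and a small-constant Young inequality that absorbs a fraction of $\int |\nabla u|^2/U$ into the leading term. This yields the sub-coercivity estimate
\begin{equation*}
\int U|\nabla \Ms u|^2\, dy \geq \delta \int \frac{|\nabla u|^2}{U}\,dy - C\int u^2 w(y)\, dy,
\end{equation*}
for some $\delta>0$ and an integrable weight $w$ with polynomial decay.

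The last step is to transfer the error $\int u^2 w \, dy$ to the orthogonality failures via the $L^2$ coercivity \fref{bd:coercivite L2}. Pointwise domination and interpolation bound this error by $c\int u \Ms u \, dy$, which \fref{bd:coercivite L2} then dominates by $C[\la u,\Lambda U\ra^2 + \sum_j \la u, \partial_j U\ra^2]$ modulo a small multiple of $\|u\|_{L^2(1/U)}^2$ that can be re-absorbed using a weighted Hardy inequality involving $\int |\nabla u|^2/U$ under the zero-mean assumption $\int u = 0$. To remove the $\la u,\Lambda U\ra^2$ contribution, I would orthogonalize by replacing $u$ with $\tilde u := u - \alpha \Psi$ where $\Psi$ is a mass-free profile chosen so that $\la \tilde u, \Lambda U\ra_{L^2} = 0$ while preserving $\int \tilde u = 0$; the coefficient $|\alpha|$ being controlled by the $\dot H^1$ seminorm of $u$ itself, the perturbation contributes only terms absorbable by the leading $\int|\nabla u|^2/U$ and by $\sum_j\la u,\partial_j U\ra^2$.

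The main obstacle will be the treatment of the scaling direction $\Lambda U$. Since a direct check gives $\Ms(\Lambda U) = $ constant using $\log U = \Phi_U + \mathrm{const}$ and $\Phi_{\Lambda U} = y\cdot \nabla \Phi_U + 4$, the function $\Lambda U$ lies in the kernel of the left-hand side quadratic form while being compatible with the mass constraint $\int \Lambda U = 0$. Producing a coercive lower bound controlled only by the two translation-direction projections therefore requires exploiting cancellations specific to the radial/self-similar structure, and the orthogonalization above must be executed carefully so that the correction term $\alpha \Psi$ does not reintroduce uncontrolled contributions in $\int |\nabla u|^2/U$.
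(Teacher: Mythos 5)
Your Step 1 (the pointwise expansion of $U|\nabla \Ms u|^2$ followed by integration by parts, Young's inequality, and the Poisson continuity) reproduces the paper's sub-coercivity estimate \fref{id:subcoercnr1}, up to a more compact bookkeeping: the paper writes $\nabla \Ms u = \nabla(u/U) - \nabla\Phi_u$, applies Young directly, and uses the algebraic identity $\int U|\nabla(u/U)|^2 = \int |\nabla u|^2/U - \int u^2$ (coming from $|\nabla U|^2/U-\Delta U=U^2$), arriving at $\int U|\nabla(\Ms u)|^2 \geq \frac12\int |\nabla u|^2/U - C\int u^2$. So far this matches.

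The genuine gap is in your final step, where you try to close the argument \emph{directly} by transferring $\int u^2$ to the projections via \fref{bd:coercivite L2} and a Hardy-type inequality. This cannot work as stated, for two distinct reasons. First, the Hardy inequality you invoke, $\int u^2/U \lesssim \int |\nabla u|^2/U$, is false: the weighted Hardy inequality \fref{id:hardy} loses two powers of $|y|$ between the two sides, i.e. $\int u^2(1+|y|^2)^k \lesssim \int|\nabla u|^2(1+|y|^2)^{k+1}$, and since $1/U\sim (1+|y|^2)^2$ appears on \emph{both} sides in your version, the powers do not match (a bump function concentrated at scale $|y|\sim R$ with unit amplitude gives a ratio $\sim R^2\to\infty$). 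Second, \fref{bd:coercivite L2} provides a \emph{lower} bound for $\int u\Ms u$, comparable to $\int u^2/U$ modulo projections; you cannot extract from it an upper bound of the type $\int u\Ms u \lesssim \varepsilon\int u^2/U + C_\varepsilon(\text{projections})$ with a small $\varepsilon$, because by the continuity estimate \fref{eq:contM} and Cauchy--Schwarz $\int u\Ms u$ is itself of size $\int u^2/U$ up to the same projections. Consequently the error term $\int u^2$ cannot be absorbed directly. The paper instead handles this term with a \emph{contradiction/compactness argument} (its Step 2): take a minimising sequence $u_n$ normalised by $\int|\nabla u_n|^2/U=1$ and orthogonal to $\partial_j U$, use the genuine Hardy inequality $\int u_n^2(1+|y|^2)\lesssim\int|\nabla u_n|^2/U$ plus Rellich to obtain strong $L^2$ convergence $u_n\to u_\infty$, pass to the limit to identify $u_\infty$ in the kernel of $\Ms$, and obtain a contradiction from the sub-coercivity which forces $\liminf\int u_n^2>0$. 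This soft rigidity step is the essential ingredient missing from your proposal, and no elementary Hardy or interpolation estimate can replace it here.

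Finally, your worry about the $\Lambda U$ direction is based on reading the statement for general mean-free $u$, but the lemma is implicitly (and in its use in the proof of Proposition \ref{pr:coercivitenonradialbis}, explicitly) applied to $u$ \emph{without radial component}. For such $u$ the projection $\langle u,\Lambda U\rangle_{L^2}$ vanishes automatically because $\Lambda U$ is radial; no orthogonalisation against $\Lambda U$ is needed, and introducing a corrector $\alpha\Psi$ would in fact change the statement into a strictly weaker one (with an extra $\langle u,\Lambda U\rangle^2$ on the right-hand side).
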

\begin{proof}
We first prove that the projections are well-defined. This is a consequence of the following Hardy-type inequality:
\be \label{id:hardy}
\int_{\mathbb R^2} u^2(1+|y|^2)dy\lesssim \int_{\mathbb R^2}|\nabla u|^2(1+|y|^4)dy,
\ee
and of the decay $|U|\lesssim (1+|y|)^{-4}$:
\bee
 \la u, \pa_i U\ra_{L^2}^2  & \lesssim & \left(\int_{\Rb^2} |u|^2(1+|y|)^2 \right)^{\frac 12} \lesssim \left(\int_{\Rb^2} |\nabla u|^2(1+|y|)^4 \right)^{\frac 12} \lesssim  \left(\int_{\Rb^2} \frac{|\nabla u|^2}{U}\right)^{\frac 12} .
\eee
\textbf{Step 1} \emph{Subcoercivity estimate}: We use Young's inequality $ab\leq a^2/4+b^2$ to obtain:
\bee
\int_{\Rb^2} U |\nabla (\Ms u)|^2&=& \int_{\Rb^2} U \left(\left|\nabla \left(\frac u U\right)\right|^2+2\nabla  \left(\frac u U\right) \cdot \nabla \Phi_u+\left| \nabla \Phi_u \right|^2 \right) \\
&\geq & \frac 12 \int_{\Rb^2} U\left|\nabla \left(\frac u U\right)\right|^2-\int_{\Rb^2} U \left| \nabla \Phi_u\right|^2.
\eee
From the algebraic identity $
\int_{\Rb^2} U\left|\nabla \left(\frac u U\right)\right|^2=\int_{\Rb^2} \frac{|\nabla u|^2}{U}-\int_{\Rb^2} Uu^2$, the control of the Poisson field \fref{bd:poisson1} $
\int_{\Rb^2} U|\nabla \Phi_u|^2 \lesssim \int u^2$, and the decay $U(y)\lesssim (1+|y|)^{-4}$, one gets the following subcoercive estimate for some $C>0$:
\be \label{id:subcoercnr1}
\int_{\Rb^2} U |\nabla (\Ms u)|^2 \geq  \frac 12 \int_{\Rb^2} \frac{|\nabla u|^2}{U}-C \int_{\Rb^2} |u|^2.
\ee

\noindent \textbf{Step 2} \emph{Coercivity estimate}: We apply a standard minimisation technique. Assume by contradiction \fref{bd:coercivite H1} is false. Then there exists a sequence of functions $(u_n)_{n\in \mathbb N}\in \dot H^1((1+|y|)^4dy)$ without radial component such that
\be \label{id:coercnr1}
\int_{\Rb^2} \frac{|\nabla u_n|^2}{U}=1, \quad \int_{\Rb^2} u_n\pa_{y_i}U=0 \quad \text{for} \ i=1,2, \quad \int_{\Rb^2} U |\nabla (\Ms u_n)|^2 \rightarrow 0.
\ee
Up to a subsequence there exists a limit $u_{\infty}$ of $u_{n}$ in $H^1_{\text{loc}}$. Moreover, from the lower semi-continuity and the weak continuity, we have
$$
\int_{\Rb^2} \frac{|\nabla u_{\infty}|^2}{U}\leq 1, \quad \int_{\Rb^2} u_{\infty}\pa_{y_i}U=0 \quad \text{for} \ i=1,2.
$$
We now write
$$
 \int_{\Rb^2} U |\nabla (\Ms u_n)|^2=\int_{\Rb^2} \frac{|\nabla u_n|^2}{U}-\int_{\Rb^2} Uu_n^2.
$$
Above, $\frac{\nabla u_n}{U}$ converges weakly in $L^2(U\,dy)$. We remark that
$$
\int_{\Rb^2} u_n^2(1+|y|^2)\lesssim \int_{\Rb^2} \frac{|\nabla u_n|^2}{U}.
$$
From this and from the compactness of the embedding of $H^1(\Omega)$ in $L^2(\Omega)$ for $\Omega$ compact, $u_n$ converges strongly in $L^2(dy)$. Hence, from \fref{id:coercnr1} and lower semi-continuity:
$$
 \int_{\Rb^2} U |\nabla (\Ms u_{\infty})|^2=\int_{\Rb^2} \frac{|\nabla u_{\infty}|^2}{U}-\int_{\Rb^2} Uu_{\infty}^2\lesssim 0.
$$
Therefore, $\nabla \Ms u_{\infty}=0$. Since $u_{\infty}$ is without radial component, one obtains $\Ms u_{\infty}=0$. Hence, $u_{\infty}$ belongs to the Kernel of $\Ms$ intersected with $L^2((1+|y|)^2dy)$, which is $\text{Span}(\pa_{y_1}U,\pa_{y_2}U)$. From the orthogonality condition \fref{id:coercnr1}, one gets that necessarily $u_{\infty}=0$. From the subcoercivity estimate \fref{id:subcoercnr1},

$$
\int_{\Rb^2} |u_n|^2\geq \frac 1C \left( \frac 12 \int_{\Rb^2} \frac{|\nabla u_n|^2}{U}-\int_{\Rb^2} U |\nabla (\Ms u_n)|^2 \right),
$$
and hence from \fref{id:coercnr1}: $\liminf \int_{\Rb^2} |u_n|^2 \geq \frac 1C>0$. As $u_n$ converges strongly in $L^2(dy)$, this implies $\displaystyle \int_{\Rb^2} |u_\infty|^2 \neq 0$ which contradicts $u_{\infty}=0$. This concludes the proof of Lemma \ref{lemm:coerML0}.
\end{proof}

\subsection{Coercivity of $\tilde \Ls$, Proof of Proposition \ref{pr:coercivitenonradialbis}}
We are now in the position to conclude the proof of Proposition \ref{pr:coercivitenonradialbis} thanks to Lemma \ref{lemm:coerML0}. By noting that $\Delta u - \nabla \Phi_U \cdot \nabla u + uU = \nabla \cdot \left[U \nabla \Big(\dfrac{u}{U} \Big) \right] $ and
$$
Uu - \nabla U \cdot \nabla \tilde \Phi_u = - \nabla \cdot \left( U \nabla \tilde \Phi_u \right)-  b U y \cdot \nabla \tilde \Phi_u - \left(b+\frac{b^2}4|y|^2\right) U\tilde \Phi_u,
$$
we rewrite the linear operator $\tilde \Ls$ in terms of $\tilde{\Ms}$ as follows:
$$
\tilde \Ls u= \nabla \cdot \left( U \nabla \tilde{\Ms}  u  - b yu\right) -  b  U y\cdot\nabla \tilde \Phi_u  -  \left( b+\frac{b^2}{4}|y|^2\right) U \tilde \Phi_u.
$$
One has the identity
\begin{align*}
& -\int \nabla \cdot \left(U\nabla \tilde{\Ms}  u - b y u\right)\tilde{\Ms}  v \rho dy\\
& \qquad = \int U\nabla \tilde{\Ms}  u \cdot \nabla \tilde{\Ms}  v \rho + b\int y.\nabla \Phi_U u \tilde{\Ms}  v \sqrt{\rho} + b \int U y \cdot \nabla \tilde \Phi_u \tilde{\Ms}  v \rho dy +  2b \int u \tilde{\Ms}  v \rho.
\end{align*}
This leads to the following almost self-adjointness of $\tilde{\Ls}$:
\begin{equation} \label{def:bilinearformLtil}
\langle -\tilde \Ls u,v\rangle_\ast = F(u,v)+G(u,v)+2b\langle  u,v\rangle_\ast,
\end{equation}
where $F$ is the leading order part given by
$$
F(u,v):=\int_{\mathbb R^2} U\nabla \tilde{\Ms}  u \cdot \nabla \tilde{\Ms}  v \rho dy + b\int_{\mathbb R^2} y \cdot \nabla \Phi_U u \tilde{\Ms}  v \sqrt{\rho},
$$
and $G$ contains lower order terms, 
$$
G(u,v):= \int_{\mathbb R^2} \left( 2b U y \cdot \nabla \tilde \Phi_u+ \left(b +\frac{b^2}4 |y|^2\right) U \tilde \Phi_u \right)\tilde{\Ms}  v \rho dy.
$$

\begin{proof}[Proof of Proposition \ref{pr:coercivitenonradialbis}] To prove \eqref{est:coerLnuybis} we proceed in two steps:\\

\noindent \textbf{Step 1} \emph{Subcoercivity estimate}: We claim that for $u\in \dot H^1_\omega$:
\be \label{bd:subcoercivity}
F(u,u)+G(u,u)=\| \nabla u \|_{L^2_\omega}^2+\Oc \left(\| \nabla u \|_{L^2_\omega} \Big\| \frac{u}{1+|y|^{\frac 32}} \Big\|_{L^2_\omega}+\Big\| \frac{u}{1+|y|^{\frac 32}} \Big\|_{L^2_\omega}^2+b^{\frac 14} \| \nabla u \|_{L^2_\omega}^2  \right),
\ee
where the constant in the $\Oc(\cdot)$ does not depend on $b$. Let us begin with the form $F$ by writing

\bee
F(u,u)&=&\int_{\mathbb R^2} U \left|\nabla \left( \frac{u}{U} \right) \right|^2 \rho + b\int_{\mathbb R^2} \frac{y \cdot \nabla \Phi_U}{U}u^2 \rho\\
&& \quad -2\int_{\mathbb R^2} U \nabla \left( \frac{u}{U} \right) \cdot \nabla \tilde \Phi_u \rho +\int_{\mathbb R^2} U | \nabla \tilde \Phi_u |^2\rho.
\eee

The first line gathers the leading order terms at infinity. We compute

\begin{align*}
&\int_{\mathbb R^2} U \left|\nabla \left( \frac{u}{U} \right) \right|^2\rho +b\int_{\mathbb R^2} \frac{y \cdot \nabla \Phi_U}{U}u^2 \rho\\
& \qquad =\int_{\mathbb R^2} \frac{|\nabla u|^2}{U}\rho  -2\int_{\mathbb R^2} \frac{u}{U}\nabla u \cdot \nabla \Phi_U \rho +\int_{\mathbb R^2} \frac{u^2|\nabla \Phi_U|^2}{U}\rho + b\int_{\mathbb R^2} \frac{y \cdot\nabla \Phi_U}{U}u^2 \rho  \\
& \qquad = \int_{\mathbb R^2} \frac{|\nabla u|^2}{U} \rho +\int_{\mathbb R^2} u^2\nabla \cdot \left(\frac{\nabla \Phi_U}{U}\right) \rho +\int_{\mathbb R^2} \frac{u^2|\nabla \Phi_U|^2}{U}\rho = \| \nabla u\|^2_{L^2_\omega}-\int_{\mathbb R^2} u^2\rho.
\end{align*}

Thus, we have

\begin{align} \label{eq:Fexpression}
F(u,u)&=\| \nabla u\|^2_{L^2_\omega} -\int_{\mathbb R^2} u^2 \rho -2\int_{\mathbb R^2} U \nabla \left( \frac{u}{U} \right) \cdot \nabla \tilde \Phi_u \rho +\int_{\mathbb R^2} U| \nabla \tilde \Phi_u |^2\rho.
\end{align}

From \fref{bd:poisson1} with $\alpha=7/4$, and \fref{bd:generalisedhardy} with $\alpha=1/2$ we get:
\be \label{bd:poissonfieldpolynomial1}
b^{\frac 34} |\tilde \Phi_u (y)|^2 \lesssim \rho^{-1}(1+ |y|)^{-\frac 32} b^{\frac 34}\int_{\mathbb R^2}  |u|^2(1+|y|)^{\frac{7}{2}}e^{-\frac{b|y|^2}{2}}dy\lesssim \rho^{-1}(1+|y|)^{-\frac 32} \| u\|_{\dot H^1_\omega}^2.
\ee
As $\nabla \tilde \Phi_u=\nabla (\rho^{-1/2}\Phi_{\rho^{1/2} u})$, using the above inequality, and \fref{bd:poisson1} with $\alpha=1/2$, we obtain:
\be \label{bd:poissonfieldpolynomial2}
|\nabla \tilde \Phi_u(y)|^2\lesssim \rho^{-1}(1+|y|)^{-1}(1+\mathbbm 1(|y|\leq 1)\ln |y|) \int_{\mathbb R^2} u^2(1+|y|)\rho+ b^{\frac 54}\rho^{-1}(1+|y|)^{\frac 12} \| u\|_{\dot H^1_\omega}^2.
\ee
From the above estimate and the decay $|U(y)|\lesssim (1+|y|)^{-4}$, we obtain
\bee
\int_{\mathbb R^2} U |\nabla \tilde \Phi_u|^2 \rho dy \lesssim   \int_{\mathbb R^2} u^2(1+|y|)\rho dy + b^{\frac 54} \| u\|_{\dot H^1_\omega}^2.
\eee
Using again the Hardy inequality \fref{bd:hardyL2rho}, one gets

$$
\int_{\mathbb R^2} U \left|\nabla \left( \frac{u}{U} \right)\right|^2 \rho \leq C \| u\|_{\dot H^1_\omega}^2.
$$

We finally arrive at the subcoercivity estimate for $F$:
$$
F(u,u)=\| \nabla u \|_{L^2_\omega}^2+\Oc\left(\| \nabla u \|_{L^2_\omega} \Big\| \frac{u}{1+|y|^{\frac 32}} \Big\|_{L^2_\omega}+ \Big\| \frac{u}{1+|y|^{\frac 32}} \Big\|_{L^2_\omega}^2+ b^{\frac 58}\| \nabla u \|_{L^2_\omega}^2  \right).
$$
We now turn to the terms in $G$. From \fref{bd:poissonfieldpolynomial1}, \fref{bd:poissonfieldpolynomial2}, \fref{bd:hardyL2rho} and $|U|\lesssim (1+|y|)^{4}$, we get
\begin{align}
\nonumber &\sqrt{\rho}\left|2b U y\cdot \nabla \tilde \Phi_u+ \left( b +\frac{b^2}4|y|^2\right)U\tilde \Phi_u \right|\\
& \qquad  \lesssim \| \nabla u\|_{L^2_\omega}\left(b(1+|y|)^{-\frac 72} +b^{\frac{13}{8}}(1+|y|)^{-\frac{11}{4}}+b^{\frac 58}(1+|y|)^{-\frac{19}{4}}\right). \label{bd:pointwisetildephinonradial}
\end{align}
Using $|U|^{-1}\lesssim (1+|y|)^{4}$ and Cauchy-Schwarz, we obtain for the two first terms below from \fref{bd:generalisedhardy} with $\alpha=3/2$, and for the third with \fref{bd:generalisedhardy} with $\alpha=3/4$:

$$
\int_{\mathbb R^2} b(1+|y|)^{-\frac 72} \frac{u}{U}\sqrt \rho \lesssim b^{\frac 14}\left(b^{\frac 32}\int_{\mathbb R^2} u^2(1+|y|^5)\rho \right)^{\frac 12}\left(\int_{\mathbb R^2}(1+|y|)^{-4}\right)\lesssim b^{\frac 14}\| \nabla u \|_{L^2_\omega},
$$
$$
\int_{\mathbb R^2} b^{\frac{13}{8}}(1+|y|)^{-\frac{11}{4}} \frac{u}{U}\sqrt \rho \lesssim b^{\frac 78}\left(b^{\frac 32}\int_{\mathbb R^2} u^2(1+|y|^5)\rho \right)^{\frac 12}\left(\int_{\mathbb R^2}(1+|y|)^{-\frac 52}\right)\lesssim b^{\frac 78}\| \nabla u \|_{L^2_\omega},
$$
$$
\int_{\mathbb R^2} b^{\frac 58} (1+|y|)^{-\frac{19}{4}}\frac{u}{U}\lesssim b^{\frac 14}\left(b^{\frac 34} \int_{\mathbb R^2} |u|^2(1+|y|)^{\frac 72}\rho dy\right)^{\frac 12} \left(\int_{\mathbb R^2} (1+|y|)^{-5}\right)\lesssim b^{\frac 14}\| \nabla u \|_{L^2_\omega},
$$

from which we obtain the bound

\bee
\int_{\mathbb R^2} \left|2b U y \cdot \nabla \tilde \Phi_u+ \left(b+\frac{b^24}{4}|y|^2\right)U\tilde \Phi_u \right| \frac{u}{U}\rho \lesssim  b^{\frac 14} \| \nabla u \|_{L^2_\omega}^2.
\eee

By using the  estimate \fref{bd:poisson1} with $\alpha=1$ and \fref{bd:hardyL2rho} we get:
$$
\sqrt{\rho}|\tilde \Phi_u|\lesssim (1+|y|)^{\frac 12}\left(1+\mathbbm 1_{|y|\leq 1}|\ln |y||\right) \Big \| \nabla u \Big\|_{L^2_\omega},
$$
and hence from \fref{bd:pointwisetildephinonradial} one gets

\bee
\int_{\mathbb R^2} \left| 2b U y \cdot \nabla \tilde \Phi_u+ \left(b+\frac{b^2}{4}|y|^2\right)U\tilde \Phi_u \right| \tilde \Phi_u \rho \lesssim b^{\frac 58}\| \nabla u \|_{L^2_\omega}^2.
\eee

We then arrive at the estimate for $G$:

\be \label{id:continuityGcoercivity}
|G(u,u)|=\left|  \int_{\mathbb R^2} \left(2b U y \cdot \nabla \tilde \Phi_u+ \left(b+\frac{b^2}{4}|y|^2\right)U\tilde \Phi_u \right)\tilde{\Ms}  u \rho dy\right|\lesssim b^{\frac 14}\| \nabla u \|_{L^2_\omega}^2.
\ee

The estimates for $F$ and $G$ above yield the desired subcoercivity estimate \eqref{bd:subcoercivity}.\\

\noindent \textbf{Step 2} \emph{Asymptotic problem and rigidity}: First note that the third term in \fref{def:bilinearformLtil} is signed, and already satisfies that, from \fref{def:scalarMtil} and \fref{bd:coercivite L2} applied to $u\sqrt \rho$, if $u$ is without radial component, then:
\begin{align*}
\langle u,u\rangle_* & =  \int_{\Rb^2} u \sqrt{\rho} \Ms\Big(u \sqrt{\rho} \Big) dy \\
&\geq  \delta_1\int_{\Rb^2} \frac{(\sqrt \rho u)^2}{U}dy - C\Big[ \la \sqrt \rho u, \Lambda U\ra_{L^2}^2  + \la \sqrt \rho u, \pa_1 U\ra_{L^2}^2 + \la \sqrt \rho u, \pa_2 U\ra_{L^2}^2 \Big]\\
&=\delta_1 \| u \|_{L^2_\omega}^2-C \left( \left(\int_{\Rb^2} u\partial_{y_1}U\sqrt{\rho}dy\right)^2+\left(\int_{\Rb^2} u\partial_{y_2}U\sqrt{\rho}dy\right)^2\right)
\end{align*}
as $\Lambda U$ is radial. Therefore, if one assumes by contradiction that \fref{est:coerLnuybis} does not hold, then
$$
m:= \liminf_{b \rightarrow 0} \inf_{u\in \dot H^1_{\omega[b]}, \ \ \langle u , \sqrt{\rho}\nabla U\rangle_{L^2}=0} \frac{F[b](u,u)+G[b](u,u)}{\| \nabla u \|_{L^2_{\omega[b]}}} \leq 0 \quad \textup{with}\quad  \omega[b] = \frac{\rho[b]}{U}, \quad \rho[b] = e^{-\frac{b|y|^2}{2}}.
$$
From the subcoercivity estimate \fref{bd:subcoercivity} and \fref{bd:hardyL2rho}, we infer that $-\infty<m\leq 0$. Let $b_n\rightarrow 0$ and $u_n$ be sequences such that, without loss of generality, $\| \nabla u_n \|_{L^2_{\omega[b_n]}}=1$, $\langle u_n , \sqrt{\rho}\nabla U\rangle=0$ and 
$$
F[b_n](u_n,u_n)+G[b_n](u_n,u_n)\rightarrow 0.
$$
The above limit, with \fref{bd:subcoercivity} and $\| \nabla u_n \|_{L^2_{\omega[b_n]}}=1$, imply that there exists $c>0$ such that for all $n$:

$$
\int_{\mathbb R^2} u_n^2 (1+|y|)\rho[b_n]dy\geq c.
$$
The sequence $f_n=u_n \sqrt{\rho[b_n]}$ is then uniformly bounded in $\dot H^1((1+|y|^4)dy)$ from \fref{bd:estimationH1H1weigted}, with $
\int_{\mathbb R^2} f_n^2(1+|y|)\geq c$. 
Since also $\int f_n^2(1+|y|^2)$ is uniformly bounded by \fref{bd:hardyL2rho}, there exist $R,c'>0$ such that, up to a subsequence,
$$
\int_{|y|\leq R} |f_n|^2dy\geq c'.
$$
We pass to the limit: there exists $f_{\infty}\in \dot H^1((1+|y|^4)dy)$ that is the weak limit in this space of $f_n$. Moreover, by compactness of $H^1$ in $L^2$ on bounded sets, the convergence is strong in $L^2((1+|y|)dy)$, so that $f_\infty\neq 0$ from the above inequality. Let us write
$$
\sqrt{\rho[b]} \nabla \tilde \Phi_u = \nabla \Phi_{u\sqrt{\rho[b]}}-\frac{ b y}{4}\Phi_{u\sqrt{\rho[b]}}.
$$
From \fref{bd:poisson1}, we infer that the first term, i.e. the mapping $\sqrt{\rho[b]} u\mapsto  \nabla \Phi_{u\sqrt{\rho[b]}}$, is continuous from $L^2(1+|y|)$ into $L^2((1+|y|)^{-4})$. Similarly, the second term is controlled by
$$
\Big\| \frac{by}2 \Phi_{u\sqrt{\rho[b]}} \Big\|_{L^2((1+|y|)^{-4})}\lesssim \sqrt{b} \| u \|_{\dot H^1_{\omega[b]}} \to 0 \quad \textup{as}\;\; b \to 0.
$$
Therefore, $\sqrt{\rho[b_n]} \nabla \tilde \Phi_{u_n}$ converges strongly to $\nabla \Phi_{f_\infty}$ in $L^2((1+|y|)^{-4})$. Consequently, one has the continuity at the limit,

\begin{align*}
& -\int_{\mathbb R^2} u_n^2 \rho[b_n]-2\int_{\mathbb R^2} U \nabla \left( \frac{u_n}{U} \right) \cdot \nabla \tilde \Phi_{u_n} \rho[b_n]+\int_{\mathbb R^2} U | \nabla \tilde \Phi_{u_n} |^2\rho[b_n] \\
& \qquad \quad \underset{n\rightarrow \infty}{\longrightarrow}   -\int_{\mathbb R^2} f_{\infty}^2 -2\int_{\mathbb R^2} U \nabla \left( \frac{f_{\infty}}{U} \right) \cdot \nabla \Phi_{f_{\infty}} +\int_{\mathbb R^2} U \Big| \nabla \Phi_{f_{\infty}} \Big|^2.
\end{align*}

Together with the continuity estimate for $G$ \fref{id:continuityGcoercivity}, which implies its asymptotic vanishing, and lower-semicontinuity, we deduce
\begin{align*}
0&=\lim_{n\rightarrow \infty} F[b_n](u_n,u_n)+G[b_n](u_n,u_n)\\
& \qquad \geq \int_{\mathbb R^2} \frac{|\nabla f_{\infty}|^2}{U} -\int_{\mathbb R^2} f_{\infty}^2 -2\int_{\mathbb R^2} U \nabla \left( \frac{f_{\infty}}{U} \right) \cdot \nabla \Phi_{f_{\infty}} +\int_{\mathbb R^2} U| \nabla \Phi_{f_{\infty}} |^2
\end{align*}
However,
$$
\int_{\mathbb R^2} \frac{|\nabla f_{\infty}|^2}{U} -\int_{\mathbb R^2} f_{\infty}^2 -2\int_{\mathbb R^2} U \nabla \left( \frac{f_{\infty}}{U} \right) \cdot \nabla \Phi_{f_{\infty}} +\int_{\mathbb R^2} U | \nabla \Phi_{f_{\infty}} |^2= \int_{\mathbb R^2} U |\nabla \Ms f_{\infty}|^2.
$$
Hence, as $f_\infty$ is without radial component we deduce that $\Ms f_{\infty}=0$ and hence that $f_{\infty}=c_1\partial_{y_1}U+c_2\partial_{y_1}U$, with one coefficient being non zero since $f_{\infty}\neq 0$. On the other hand, the orthogonality $\langle u_n , \sqrt{\rho}\nabla U\rangle$ passes to the limit, yielding $\langle f_{\infty} , \nabla U\rangle=0$ so that $c_1=c_2=0$ which is a contradiction. This concludes the proof of Proposition \ref{pr:coercivitenonradial}. 
\end{proof}

\appendix
\section{Estimates on the Poisson field}
We first recall estimates relative to the weight $e^{-|z|^2/2}$ with polynomial corrections. First, there holds the bound for any $k\geq 0$ for any function without radial component
\be \label{bd:genpoincare}
\int v^2 |z|^{2k}(1+|z|^{2})e^{-\frac{|z|^2}{2}}dz \lesssim \int |\nabla v|^2 |z|^{2k} e^{-\frac{|z|^2}{2}}dz.
\ee
By a scaling argument, this implies that for $0<b\leq 1$:
\be \label{bd:hardyrho}
\int b^2(|y|^2+|y|^6)|u|^2e^{-\frac{b|y|^2}{2}}\lesssim \int (1+|y|^4)|\nabla u|^2e^{-\frac{b|y|^2}{2}}
\ee
with constant independent on $b$. Therefore:
\be \label{bd:estimationH1H1weigted}
\int (1+|y|^4)|\nabla (u e^{-\frac{b|y|^2}{4}})|^2\leq C \int (1+|y|^4)|\nabla u|^2e^{-\frac{b|y|^2}{2}}.
\ee
Applying \fref{id:hardy} one obtains from the above inequality the Hardy-type inequality with weight $e^{-b|z|^2/2}$:
\be \label{bd:hardyL2rho}
\int (1+|y|^2) u^2 e^{-\frac{b|y|^2}{2}}  \lesssim \int (1+|y|^4)|\nabla u|^2e^{-\frac{b|y|^2}{2}},
\ee
with constant independent on $b$. Interpolating between the above inequality and \fref{bd:hardyrho} we obtain that for any $0\leq \alpha\leq 2$:
\be \label{bd:generalisedhardy}
b^{\alpha} \int_{\mathbb R^2} |u|^2(1+|y|^{2+2\alpha})e^{-\frac{b|y|^2}{2}}dy \lesssim \int |\nabla u|^2(1+|y|^{4})e^{-\frac{b|y|^2}{2}}dy.
\ee
For $u$ localised on a single spherical harmonics $Y^{(k,i)}$ with
$$
Y^{(k,i)}(y)=\left\{\begin{array}{l l} \cos^k\left(\frac{y}{|y|}\right) \quad \mbox{if }i=1, \\ \sin^k\left(\frac{y}{|y|}\right) \quad \mbox{if }i=2,\end{array}\right.
$$
where we identify $y/|y|$ with its angle on the unit circle, the Laplace operator is written as

$$
\Delta u (x)=\Delta^{(k)} (u^{(k,i)})(r) Y^{(k,i)}\left(\frac{y}{|y|}\right), \ \ \Delta^{(k)}= \pa_{rr}+\frac{1}{r}\pa_r-\frac{k^2}{r^2}.
$$

The fundamental solutions to $\Delta^{(k)}f=0$ are $\ln (r)$ and $1$ for $k=0$, and $r^k$ and $r^{-k}$ for $k\geq 1$, with Wronskian relations:
$$
W^{(0)}=\frac{d}{dr}\ln (r)=r^{-1} \ \ \text{and} \ \ W^{(k)}=\frac{d}{dr}(r^k)r^{-k}-r^k \frac{d}{dr}(r^{-k})=2kr^{-1} \ \text{for} \ k\geq1.
$$
The solution to $-\Delta \Phi_u=u$ given by $\Phi_u=-(2\pi)^{-1}\ln (|x|)*u$ is then given on spherical harmonics by:
$$
\Phi_u^{(0,0)}(r)=-\ln (r) \int_0^r u^{(0,0)}(\tilde r)\tilde r d \tilde r -\int_r^{\infty} u^{(0,0)}(\tilde r)\ln(\tilde r)\tilde r d\tilde r ,
$$
\be \label{id:nablaPhi0}
\nabla \Phi_u^{(0,0)}(x)=-\frac{x}{|x|^2} \int_0^{|x|} u^{(0,0)}(\tilde r)\tilde r d \tilde r,
\ee
\be \label{id:Phiki}
\Phi_u^{(k,i)}(r)= \frac{r^k}{2k} \int_r^{\infty} u^{(k,i)}(\tilde r)\tilde r^{1-k} d \tilde r +\frac{r^{-k}}{2k} \int_0^r u^{(k,i)}(\tilde r) \tilde r^{1+k} d\tilde r,
\ee
\be \label{id:nablaPhiki}
\pa_r \Phi_u^{(k,i)}(r)= \frac{r^{k-1}}{2} \int_r^{\infty} u^{(k,i)}(\tilde r)\tilde r^{1-k} d \tilde r -\frac{r^{-k-1}}{2} \int_0^r u^{(k,i)}(\tilde r) \tilde r^{1+k} d\tilde r.
\ee

\begin{lemma}
If $u$ is without radial component, for any $0<\alpha<2$:
\be \label{bd:poisson1}
|\Phi_{u}|^2+|y|^2|\nabla \Phi_{u}|^2\lesssim |y|^2(1+|y|)^{-2\alpha}\left(1+\mathbbm 1_{|y|\leq 1}|\ln |y||\right) \int_{\mathbb R^2}  |u|^2(1+|y|)^{2\alpha }dy.
\ee
\end{lemma}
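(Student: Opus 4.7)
My plan is to decompose $u$ in spherical harmonics, derive per-mode pointwise bounds on $\Phi_u^{(k,i)}$ and $r\partial_r \Phi_u^{(k,i)}$ via Cauchy--Schwarz applied to the explicit integral representations \eqref{id:Phiki}--\eqref{id:nablaPhiki}, and sum the modes pointwise using the $L^\infty$ bound on the spherical harmonics together with Parseval on the angular variable.

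First I would write $u(y) = \sum_{k\geq 1, i} u^{(k,i)}(r) Y^{(k,i)}(\theta)$; the assumption that $u$ has no radial component kills the $k=0$ mode. I would rewrite \eqref{id:Phiki} in the convenient form
$$
\Phi_u^{(k,i)}(r) = \frac{1}{2k}\int_r^\infty u^{(k,i)}(\tilde r)\Big(\frac{r}{\tilde r}\Big)^k \tilde r\, d\tilde r + \frac{1}{2k}\int_0^r u^{(k,i)}(\tilde r)\Big(\frac{\tilde r}{r}\Big)^k \tilde r\, d\tilde r,
$$
highlighting the two kernels $(r/\tilde r)^k$ and $(\tilde r/r)^k$, both bounded by $1$. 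Applying Cauchy--Schwarz with the weight split $1=(1+\tilde r)^{\alpha}(1+\tilde r)^{-\alpha}$ reduces matters to estimating
$$
I_k(r) = \int_r^\infty \Big(\frac{r}{\tilde r}\Big)^{2k}(1+\tilde r)^{-2\alpha}\tilde r\, d\tilde r, \quad J_k(r) = \int_0^r \Big(\frac{\tilde r}{r}\Big)^{2k}(1+\tilde r)^{-2\alpha}\tilde r\, d\tilde r.
$$
I claim $I_k(r)+J_k(r) \lesssim k^{-1} w(r)$ uniformly in $k\geq 1$, where $w(r):=r^2(1+r)^{-2\alpha}(1+\mathbbm{1}_{r\leq 1}|\ln r|)$; the constraint $\alpha<2$ ensures convergence of the large-$\tilde r$ tail of $J_k$, and the logarithm appears solely from $I_1$ in the regime $r\leq 1$. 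Together with the $1/(2k)$ prefactor this yields $|\Phi_u^{(k,i)}(r)|^2 \lesssim w(r)k^{-3}\|u^{(k,i)}\|_{L^2(\tilde r (1+\tilde r)^{2\alpha})}^2$, and applying the same Cauchy--Schwarz to \eqref{id:nablaPhiki} (which lacks the $1/(2k)$ prefactor) gives $|r\partial_r \Phi_u^{(k,i)}(r)|^2 \lesssim w(r)k^{-1}\|u^{(k,i)}\|^2$.

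To sum pointwise, I would use $|Y^{(k,i)}|\leq 1$, Cauchy--Schwarz in $k$ with the convergent series $\sum k^{-3}$, and Parseval on the angular variable:
$$
|\Phi_u(y)|^2 \leq \Big(\sum_{k,i} k^{-3}\Big)\Big(\sum_{k,i} k^3 |\Phi_u^{(k,i)}|^2\Big) \lesssim w(r)\sum_{k,i}\|u^{(k,i)}\|^2 \lesssim w(r)\|u\|_{L^2((1+|y|)^{2\alpha})}^2.
$$
For $|y|^2|\nabla \Phi_u|^2 = r^2|\partial_r \Phi_u|^2 + |\partial_\theta \Phi_u|^2$, the algebraic identity $(r\partial_r\Phi_u^{(k,i)})^2 + k^2(\Phi_u^{(k,i)})^2 \leq \tfrac{1}{2}(I_k+J_k)\|u^{(k,i)}\|^2$, obtained by viewing the two integrals in \eqref{id:Phiki} as Cauchy--Schwarz-controlled quantities and noting that $r\partial_r\Phi_u^{(k,i)}$ and $k\Phi_u^{(k,i)}$ are their half-sum and half-difference, controls both $r\partial_r \Phi_u^{(k,i)}$ and the coefficient $k\Phi_u^{(k,i)}$ of $\partial_\theta \Phi_u$ at the common scale $\sqrt{w(r)/k}\,\|u^{(k,i)}\|$, and the same summation scheme carries through.

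The main obstacle will be the precise $O(1/k)$ bound on $I_k+J_k$ uniformly in $k\geq 1$, with the sharp logarithmic correction near the origin. This forces splitting the integration domain at $\tilde r = 1$ and $\tilde r = r$, analyzing separately the regimes $r\leq 1$ versus $r\geq 1$ and $k=1$ versus $k\geq 2$, and combining the various cases into a single clean estimate; in particular the case $k=1$, $r\leq 1$ is the only source of the $|\ln r|$ factor.
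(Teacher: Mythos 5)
Your scheme matches the paper's: decompose in spherical harmonics, apply Cauchy--Schwarz mode by mode to \eqref{id:Phiki}--\eqref{id:nablaPhiki}, and then sum over $k$. You are in fact more careful than the paper in tracking the $k$-dependence: the bound $I_k+J_k\lesssim k^{-1}w(r)$ and the observation that $r\partial_r\Phi_u^{(k,i)}$ and $k\Phi_u^{(k,i)}$ are the half-difference and half-sum of the two Cauchy--Schwarz--controlled integrals (so that $(r\partial_r\Phi_u^{(k,i)})^2+k^2(\Phi_u^{(k,i)})^2\le\tfrac12(I_k+J_k)\|u^{(k,i)}\|^2\lesssim k^{-1}w(r)\|u^{(k,i)}\|^2$) are both correct, and combined with the $1/(2k)$ prefactor in \eqref{id:Phiki} they give $|\Phi_u^{(k,i)}|^2\lesssim k^{-3}w(r)\|u^{(k,i)}\|^2$. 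The Cauchy--Schwarz in $k$ against $\sum k^{-3}$ then closes the estimate for $|\Phi_u|^2$ cleanly.

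The gap is the closing sentence, ``the same summation scheme carries through,'' applied to $|y|^2|\nabla\Phi_u|^2$. Your own sharp per-mode bound for $r\partial_r\Phi_u^{(k,i)}$ and $k\Phi_u^{(k,i)}$ is of size $k^{-1/2}\sqrt{w(r)}\,\|u^{(k,i)}\|$, one full power of $k$ worse than for $\Phi_u^{(k,i)}$. Inserting this into a Cauchy--Schwarz in $k$ with weight $k^\beta$, summability of $\sum k^{-\beta}$ forces $\beta>1$, while dominating $\sum k^{\beta}\cdot k^{-1}w(r)\|u^{(k,i)}\|^2$ by $w(r)\sum\|u^{(k,i)}\|^2$ forces $\beta\le1$; no choice works, and the natural $\beta=1$ produces the divergent $\sum k^{-1}$. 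Nor is this an artifact of the bound: the rate $k^{-1/2}$ is attained by $u^{(k,1)}(r)=c_k k^{1/2}h(k(r-1))$ with $h$ a fixed bump supported in $(0,1)$, for which $\|u^{(k,1)}\|\sim c_k$ while $r\partial_r\Phi_u^{(k,1)}(1)\sim c_kk^{-1/2}$, and choosing $c_k=k^{-1/2}(\ln N)^{-1/2}\mathbbm 1_{2\le k\le N}$ keeps $\int u^2(1+|y|)^{2\alpha}dy\lesssim1$ yet drives $|\partial_r\Phi_u(1,0)|\gtrsim\sqrt{\ln N}$. So closing the pointwise gradient estimate in \eqref{bd:poisson1} requires a genuinely additional ingredient (some angular regularity of $u$, or passing to the integrated-in-$y$ bounds that the coercivity argument actually uses, which do hold by Parseval with no Cauchy--Schwarz loss in $k$). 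For what it is worth, the paper's own proof is equally terse at exactly this spot --- after displaying per-mode estimates with a $k$-independent constant it merely asserts ``so by summing we obtain \eqref{bd:poisson1}'' --- so your write-up reproduces the paper's argument faithfully while making the gap visible rather than introducing a new one.
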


\begin{proof}

We decompose $\Phi_u$ in spherical harmonics. Note that $\Phi_u^{(0,0)}=0$ as $u$ has no radial component. Applying Cauchy-Schwartz inequality in both terms in \fref{id:nablaPhiki} one gets for $k\geq 1$, as $0<\alpha<2$:
\bee
&& \left| \int_r^{\infty} u^{(k,i)}(\tilde r)\tilde r^{1-k} d \tilde r  \right | \lesssim \left( \int_r^{\infty} |u^{(k,i)}|^2(1+r)^{2\alpha }\tilde r d\tilde r \right)^{\frac 12}\left( \int_{r}^{\infty} (1+r)^{-2\alpha}\tilde r^{1-2k} d\tilde r \right)^{\frac 12}\\
&\lesssim & r^{1-k}(1+r)^{-\alpha} \left(1+\mathbbm 1_{r\leq 1}|\ln r|\right) \left( \int_0^{\infty} |u^{(k,i)}|^2(1+r)^{2\alpha }\tilde r d\tilde r \right)^{\frac 12},
\eee
\bee
\left|\int_0^r u^{(k,i)}(\tilde r) \tilde r^{1+k} d\tilde r \right| &\lesssim & \left( \int_0^{r} |u^{(k,i)}|^2(1+r)^{2\alpha }\tilde r d\tilde r \right)^{\frac 12}\left( \int_{0}^{r} (1+r)^{-2\alpha}\tilde r^{1+2k} d\tilde r \right)^{\frac 12}\\
&\lesssim &r^{1+k}(1+r)^{-\alpha} \left( \int_0^{\infty} |u^{(k,i)}|^2(1+r)^{2\alpha }\tilde r d\tilde r \right)^{\frac 12}.\\
\eee
The two above inequalities, injected in \fref{id:Phiki}, \fref{id:nablaPhiki} produce:
$$
|\Phi_u^{(k,i)}(r)|\lesssim \frac 1 k r^{1}\left(1+\mathbbm 1_{r\leq 1}|\ln r|\right)(1+r)^{-\alpha}\left(  \int_0^{\infty} |u^{(k,i)}|^2(1+r)^{2\alpha }\tilde r d\tilde r\right)^{\frac 12},
$$
$$
|\pa_r \Phi_u^{(k,i)}(r)|\lesssim \left(1+\mathbbm 1_{r\leq 1}|\ln r|\right)(1+r)^{-\alpha}\left(  \int_0^{\infty} |u^{(k,i)}|^2(1+r)^{2\alpha }\tilde r d\tilde r\right)^{\frac 12}.
$$
On each spherical harmonic we thus have:
$$
\left| \Phi_{u}^{(k,i)}Y^{(k,i)} \right|^2+r^2\left|\nabla \left(\Phi_{u}^{(k,i)}Y^{(k,i)} \right) \right|^2 \lesssim r^{2}(1+r)^{-2\alpha }\left(1+\mathbbm 1_{r\leq 1}|\ln r|\right)\int_0^{\infty} |u^{(k,i)}|^2(1+r)^{2\alpha }\tilde r d\tilde r .
$$
The constant in the inequality above is independent on $k,i$, so by summing we obtain \fref{bd:poisson1}.
\end{proof}

\def\cprime{$'$}

\end{document}